 \newtheorem{thm}{Theorem}
 \newtheorem{prop}{Proposition}
 \newtheorem{lem}{Lemma}
 \newtheorem{rem}{Remark}
\title{Intrinsic area near the origin for self-similar growth-fragmentations and related random surfaces}
\author{François G. Ged\footnote{Insitut für Mathematik, Universität Zürich, Switzerland. E-mail: francois.ged@math.uzh.ch}}
\date{}
\newcommand{\intervalleff}[2]{\left[#1\,,#2\right]}
\newcommand{\intervallefo}[2]{\left[#1\,,#2\right)}
\newcommand{\intervalleof}[2]{\left(#1\,,#2\right]}
\newcommand{\intervalleoo}[2]{\left(#1\,,#2\right)}
\begin{document}
	\maketitle
	\vspace{-1.1cm}
	
	\begin{abstract}
		We study the behaviour of a natural measure defined on the leaves of the genealogical tree of some branching processes, namely self-similar growth-fragmentation processes.
		Each particle, or cell, is attributed a positive mass that evolves in continuous time according to a positive self-similar Markov process and gives birth to children at negative jumps events.
		We are interested in the asymptotics of the mass of the ball centered at the root, as its radius decreases to $0$.
		We obtain the almost sure behaviour of this mass when the Eve cell starts with a strictly positive size.
		This differs from the situation where the Eve cell grows indefinitely from size 0.
		In this case, we show that, when properly rescaled, the mass of the ball converges in distribution towards a non-degenerate random variable.
		We then derive bounds describing the almost sure behaviour of the rescaled mass.
		Those results are applied to certain random surfaces, exploiting the connection between growth-fragmentations and random planar maps obtained in \cite{BBCK16}.
		This allows us to extend a result of Le Gall \cite{L17} on the volume of a free Brownian disk close to its boundary, to a larger family of stable disks.
		The upper bound of the mass of a typical ball in the Brownian map is refined, and we obtain a lower bound as well.\vspace{0.5cm}
		
		\textbf{Keywords:} Self-similar growth-fragmentations; intrinsic area; rate of growth.
		
		\textbf{AMS MSC 2010:} 60J25; 60G18; 60G57.

	\end{abstract}

	\section{Introduction}
	Growth-fragmentation processes form a family of continuous time branching processes that have been introduced by Bertoin \cite{B17}.
	They model particle systems without interaction where each particle is described by a positive real number that corresponds to its mass (or size), evolving by growing and splitting, with rates that can depend on its current mass.
	Note that these processes differ from (pure) fragmentation processes \cite{B01,B04}, for which growth is not allowed.
	The fragmentations are binary and when a particle splits, its mass is instantaneously randomly distributed among the two resulting fragments.
	A \textit{self-similar growth-fragmentation} $(\mathbf{X}(t))_{t\geq 0}$ is a growth-fragmentation where every particle evolves according to a driving self-similar Markov process.
	Self-similarity refers to the property that the evolution of a particle of size $x>0$ is a scaling transformation of that of a particle of unit size, depending on some index $\alpha\in\mathbb{R}$.
	The law of $\mathbf{X}$ is characterized by $\alpha$ and its \textit{cumulant function} $\kappa:\mathbb{R}\intervalleof{-\infty}{\infty}$, both depending on the driving process.
	
	In this paper we are interested in the cases where $\alpha<0$ and $\kappa$ has two positive root $\omega_-<\omega_+$.
	Under some further conditions on $\alpha$ and $\kappa$, the growth-fragmentation yields a natural measure on the genealogical tree of the branching process seen as a metric space, namely the \textit{intrinsic area measure}.
	Denoting $A(t)$ the intrinsic area of the ball of radius $t$ centered at the root of the tree, one can investigate the regularity of the Stieltjes measure $\mathrm{d}A(t)$.
	This has been studied in \cite{G18}, in which Theorem 1 shows that if $\alpha>-\omega_-$, then $\mathrm{d}A(t)$ is absolutely continuous whereas it is singular when $\alpha\leq -\omega_-$.
	In this paper, we consider the absolutely continuous case.
	A noteworthy fact is that the density is null at $t=0$, meaning that the dissipation of the area occurs as small particles are about to vanish.
	We study further properties of $A$.
	
	Our first result is Theorem \ref{Theorem area GF}, that determines the almost sure behaviour of $A(\epsilon)$ as $\epsilon\to 0+$, when $\mathbf{X}$ starts from a single particle of positive size $x$.
	Namely, there exists a regularly varying function $f:\mathbb{R}_+\to\mathbb{R}_+$ explicitely given in terms of the characteristics of the driving process, such that $A(\epsilon)/f(\epsilon)\to 1$ as $\epsilon\to 0+$.
	
	It is possible to tilt the law of $\mathbf{X}$ such that the Eve cell starts from size 0.
	Indeed, in \cite{BBCK16}, the authors introduced a new probability measure, under which $\mathbf{X}$ initiates from a distinguished particle that grows indefinitely from the initial size 0, and evolves differently from the others which eventually die out.
	Let denote $A^+$ the analog of $A$ under this new measure.
	In this case, Theorem \ref{Theorem area GF} does not apply.
	However, Proposition \ref{Theorem stationary area from 0} shows that, $t\mapsto e^{\omega t/\alpha}A^+(e^t)$ defines a stationary process.
	In particular, $t^{\omega_-/\alpha}A^+(t)$ has a limit in distribution as $t\to 0+$, but has no almost sure limit.
	We prove in Propositions \ref{Theorem upper envelope A(t)} and \ref{Proposition lower envelope A(t)} that, almost surely, $A^+(t)$ deviates from $t^{\omega_-/|\alpha|}$ by at most a power of $|\log(t)|$.
	
	One of the motivations of the present work is that these branching processes turn out to be geometrically connected to some random surfaces.
	The Brownian map is a random surface homeomorphic to the two-dimensional sphere that appears as the Gromov-Hausdorff scaling limit when $n\to\infty$ of uniformly distributed $q$-angulations with $n$ faces of the sphere \cite{L13,M13}.
	Similarly, Brownian disks are random compact metric spaces homeomorphic to the unit disk of $\mathbb{R}^2$, obtained as scaling limits of random planar maps with a boundary \cite{BM17}. 
	In \cite{CL14}, the Brownian plane, locally isomorphic to the Brownian map, is obtained as scaling limits of the UIPQ and uniform quadrangulations. Moreover it can also be seen as the Gromov-Hausdorff tangent cone of the Brownian map at its root.
	
	In \cite{BCK15}, it was shown that the collection of perimeters of the holes observed when slicing Boltzmann triangulations at all heights converges, when properly rescaled, towards a particular self-similar growth-fragmentation.
	We also mention Theorems 3 and 23 of Le Gall and Riera \cite{LR18}, which show that, when slicing directly the free Brownian disk, the holes' perimeters are described by the same growth-fragmentation as in \cite{BBCK16} (see also \cite{MS15} Section 4).
	When $\mathbf{X}$ starts from a single cell of size 0 that grows indefinitely, the geometrical connection corresponds this time to the holes in a sliced discrete approximation of the Brownian plane.
	
	There is actually a broader family of continuum random surfaces, with different scaling exponents, that arise as limiting objects of Boltzmann random planar maps. They are known as \textit{stable maps} and were first obtained by Le Gall and Miermont \cite[and references therein]{LM11,LM10}, by considering very specific distributions for the degree of a typical face, that have infinite variance (see also \cite{M18} for the same scaling limits under relaxed hypotheses).
	However, only the Brownian map has been characterized yet, in the sense that for other stable maps, the uniqueness of the scaling limit is not yet proven.
	
	Similarly as in the Brownian case, the authors in \cite{BBCK16} were able to extend the geometrical connection previously mentioned, to holes' perimeters in discrete approximations of stable disks and plane (in the so-called \textit{dilute case}) and a specific family of self-similar growth-fragmentations.
	
	There is a natural way of measuring the "size" of these stable surfaces, namely, the so-called \textit{intrinsic volume measure}, which can be constructed as the scaling limit of the number of vertices (or faces) in the approximation by discrete random maps.
	This measure corresponds to the intrinsic area measure in the related growth-fragmentation; we shall rather use this name since we consider planar objects.
	
	Besides being aesthetic, this connection has already been fruitful in both directions: for instance, it allowed the authors in \cite{BBCK16} to use results on discrete random planar maps to determine the law of the total intrinsic area of the related growth-fragmentations.
	On the other hand, it was known that the intrinsic area of the Brownian map cannot be derived as the length of the perimeters of the holes and the height, as for smooth surfaces, since the latter defines a measure that is not locally finite.
	From the analysis of growth-fragmentations, it has been shown in \cite{G18} that the intrinsic area of the Brownian map can be written as the integral against the Lebesgue measure of some function of the perimeters of the holes.
	
	Our results in this paper apply to stable surfaces.
	In particular, Thanks to Theorem \ref{Theorem area GF}, we retrieve Theorem 3 of Le Gall \cite{L17}, which shows that, in the case of the free Brownian disk with boundary length $x>0$, it holds that $\epsilon^{-2}A(\epsilon)\to x$ almost surely, as $\epsilon\to 0+$ (see \eqref{Equation Le Gall}).
	More generally, we obtain the analogue for other stable disks, with different exponents. 
	
	Applying Proposition \ref{Theorem upper envelope A(t)} improves the upper bound that was known for the almost sure behaviour of the area of the ball of radius $\epsilon$ around the root of the Brownian map (i.e. $A^+(\epsilon)$): the previous bound was $\epsilon^{4-\delta}$ for $\delta>0$ arbitrary small, we obtain $\epsilon^{4}|\log(\epsilon)|^{1+\delta}$.
	We moreover obtain the lower bound $\epsilon^4|\log(\epsilon)|^{-q}$ for any $q>6$, thanks to Proposition \ref{Proposition lower envelope A(t)}.
	Again, the analogue holds for stable maps.
	
	The organisation of the paper is the following.
	In Section 2, we formally introduce the growth-fragmentations setting.
	This includes definitions of positive self-similar Markov processes and Lamperti's transformation, the intrinsic area measure of a growth-fragmentation, as well as the two spinal decompositions introduced in \cite{BBCK16} that are central throughout this work.
	We also establish an important Markov-branching property of $A$ in Lemma \ref{Lemma Markov-branching property of A}, which, roughly speaking, reduces the study of $A$ to that of a single self-similar Markov process.

	We state our results in Section 3.
	Theorem \ref{Theorem area GF} concerns the almost sure behaviour of $A(\epsilon)$ as $\epsilon\to 0+$, when $\mathbf{X}$ starts from a typical cell with positive initial size.
	Our second result is Proposition \ref{Theorem upper envelope A(t)} which provides an upper bound for the almost sure behaviour of $A^+(\epsilon)$ when the initial cell starts from $0$ and behaves differently from the others, with indefinite growth.
	A lower bound is then given in Proposition \ref{Proposition lower envelope A(t)}.
	
	We prove Theorem \ref{Theorem area GF} in Section 4.
	The first subsection looks at the expectation of $A(\epsilon)$.
	In the second subsection, we introduce a useful martingale to control the fluctuations of $A(\epsilon)$ around its expectation and prove Theorem \ref{Theorem area GF}.  
	
	Section 5 is devoted to the proof of Propositions \ref{Theorem upper envelope A(t)} and \ref{Proposition lower envelope A(t)}, that are respectively upper bound and lower bound for $A^+(\epsilon)$ as $\epsilon\to 0+$.
	After having established Proposition \ref{Theorem stationary area from 0}, we prove Proposition \ref{Theorem upper envelope A(t)} in the first subsection.
	We then turn our attention to Proposition \ref{Proposition lower envelope A(t)} in the second subsection.
	The arguments are different and the proof is more involved.
	
	We conclude this paper with Section 6, in which we apply our results to stable surfaces.

	\section{Self-similar growth-fragmentations}\label{Section self-siilar growth-fragmentations}
	
	\paragraph{The law of a typical cell.}
	For all $x>0$, let $\mathbb{P}_x$ denote the law of a positive self-similar Markov process $X=(X(t))_{t\geq 0}$ starting from $x$ and absorbed at 0.
	Self-similarity refers to the property that under $\mathbb{P}_x$, the law of $(X(t))_{t\geq 0}$ is the same as that of $(xX(tx^\alpha))_{t\geq 0}$ under $\mathbb{P}_1$, for some real index $\alpha$.
	We shall always assume here that $\alpha<0$.
	For all $t\geq 0$, let introduce the time-change
	\begin{align}\label{Equation Lamperti tau_t}
		\tau_t:=\int_0^tX(s)^\alpha\mathrm{d}s.
	\end{align}
	The well-known Lamperti's transformation states that there exists a unique L\'evy process $\xi=(\xi(t))_{t\geq 0}$ such that 
	\begin{align*}
		X(t)=\exp\left(\xi(\tau_t)\right),\quad \forall t\geq 0.
	\end{align*}
	Equivalently to \eqref{Equation Lamperti tau_t}, one can write
	\begin{align*}
		\tau_t=\inf\left\{s\geq 0:\int_0^s\exp(-\alpha\xi(u))\mathrm{d}u\geq t\right\}.
	\end{align*}
	Let $\Lambda$ be the L\'evy measure of $\xi$ and assume that $\int_{\intervalleoo{1}{\infty}}e^{y}\Lambda(\mathrm{d}y)<\infty$.
	We thus have, at least for $q\in\intervalleff{0}{1}$, that $\mathbb{E}(\exp(q\xi(t)))=\exp(t\psi(q))$ for all $t\geq 0$, with  
	\begin{align}\label{Equation Laplace exponent psi of xi}
		\psi:q\mapsto bq+\frac{\sigma^2}{2}q^2+\int_{\mathbb{R}}\left(e^{qy}-1+q(1-e^y)\right)\Lambda(\mathrm{d}y),\quad q\geq 0,
	\end{align}
	where $b,\sigma^2\geq 0$.
	We point out that this is not the traditional L\'evy-Khintchin formula for the Laplace exponent $\psi$ of a L\'evy process, in which $(1-e^y)$ above is usually replaced by $y\mathds{1}_{\left\{|y|<1\right\}}$, but \eqref{Equation Laplace exponent psi of xi} is more convenient for our purposes.
	We assume throughout this paper that $\psi'(0)<0$. It is known that this entails that $\xi$ drifts to $-\infty$ almost surely, and in particular that the absorption time in $0$ of $X$, namely $\int_0^\infty\exp(-\alpha\xi(t))\mathrm{d}t$, is finite almost surely.
	
	\paragraph{The cell-system.}
	We briefly recall Bertoin's construction of the cell-system as in \cite{B17}.
	We use the classical Ulam-Harris-Neveu notation to label the particles of the branching process.
	Let $\mathbb{U}:=\bigcup_{n\geq 0}\mathbb{N}^n$ and let $\partial\mathbb{U}$ be the set of infinite sequences of natural integers.
	We shall also denote $\overline{\mathbb{U}}:=\mathbb{U}\cup\partial\mathbb{U}$.
	If $u\in\mathbb{N}^n$ for some $n\in\mathbb{N}$, we write $u(k)\in\mathbb{N}^k$ its ancestor at generation $k\leq n$ and $|u|=n$ its generation. 
	
	The process starts with a single cell that we call the Eve cell, indexed by $\emptyset$ and size at any time $t\geq 0$ denoted by $\chi_{\emptyset}(t)$.
	It evolves in time, starting from its birthtime $b_\emptyset:=0$ according to $\mathbb{P}_1$ until its absorption time $\zeta_\emptyset$ at 0. In this context, $\zeta_{\emptyset}$ is called the \textit{lifetime} of $\chi_\emptyset$.
	
	Since $\chi_\emptyset$ converges to 0 almost surely, it is possible to rank all its negative jumps in the decreasing order of the absolute values of their sizes.
	If $\left\{(b_i,\Delta_i);i\geq 1\right\}$ is the collection of times and sizes of these negative jumps ranked in this way, then for each $i\geq 1$, we start at time $b_i$ a new positive self-similar Markov process $\chi_i$ under the law $\mathbb{P}_{|\Delta_i|}$, independently of every other cell. We denote its lifetime $\zeta_i$, that is $\chi_i(t)$ is the size of the cell labelled $i$ if $b_i\leq t<b_i+\zeta_i$, and is sent to some cemetery state $\partial$ otherwise.
	In this manner, we construct recursively the whole cell-system indexed by $\mathbb{U}$, and we denote $\mathcal{P}_1$ its law. More generally, we can construct such a cell-system with the Eve cell starting from a size $x>0$, its law is then denoted $\mathcal{P}_x$.

	\paragraph{The growth-fragmentation.}
	The growth-fragmentation process $(\mathbf{X}(t))_{t\geq 0}$ induced by the cell-system is the process following the collection of particles' sizes in time, forgetting about their genealogy, that is
	\begin{align*}
		\mathbf{X}(t):=\left\{\!\!\left\{\chi_u(t-b_u):u\in\mathbb{U},b_u\leq t<b_u+\zeta_u\right\}\!\!\right\},
	\end{align*}
	where the elements are repeated according to their multiplicity.
	
	The law of $\mathbf{X}$ is characterized by $\alpha$ and a particular function $\kappa:\mathbb{R}_+\to\mathbb{R}\cup\{\infty\}$ called the \textit{cumulant function} of $\mathbf{X}$ (see \cite{S17}), which is defined as
	\begin{align*}
		\kappa(q):=\psi(q)+\int_{\intervalleoo{-\infty}{0}}(1-e^y)^q\Lambda(\mathrm{d}y).
	\end{align*}
	The importance of $\kappa$ for the study of self-similar growth-fragmentations comes from the fact, taken from \cite{B17} Lemma 3, that $\mathbb{E}_x\left(\sum_{i\geq 1}\chi_i(0)^{q}\right)=x(1-\kappa(q)/\psi(q))$, when it makes sense.
	This means that when $\kappa(q)=0$, the initial size of a cell is equal to the expectation of the sum of the sizes raised to the power $q$ of its first generation children at their birthtime.
	For this reason, we assume throughout this work that the \textit{Cram\'er hypothesis} holds, that is there exists $\omega_->0$ such that
	\begin{align}\label{Equation Cramer hypothesis}
		\kappa(\omega_-)=0,\quad-\infty<\kappa'(\omega_-)<0.
	\end{align}
	We suppose that $\kappa$ has a second root $\omega_+>\omega_-$ and is finite in a right neighbourhood of $\omega_+$, which by convexity of $\kappa$ implies $0<\kappa'(\omega_+)<\infty$.
	
	\paragraph{Intrinsic area measure.}
	Thanks to \eqref{Equation Cramer hypothesis}, the process
	\begin{align*}
		\mathcal{M}(n):=\sum_{|u|=n}\chi_u(0)^{\omega_-},\quad n\geq 0
	\end{align*}
	is a uniformly integrable martingale, see Lemma 2.4 in \cite{BBCK16}.
	Its terminal value $\mathcal{M}:=\lim_{n\to\infty}\mathcal{M}(n)$ is called the \textit{intrinsic area} of $\mathbf{X}$ and we point out that, by construction, it does not depend on $\alpha$.
	
	Endowed with the distance $d(\ell,\ell'):=\exp(-\sup\{n\geq 0:\ell(n)=\ell'(n)\})$, $\partial\mathbb{U}$ is a complete metric space.
	The \textit{intrinsic area measure} $\mathcal{A}$ is then the unique measure on $\partial\mathbb{U}$ such that the mass of the subsets of leaves having ancestor $u\in\mathbb{N}^n$ at generation $n\geq 0$ satisfies
	\begin{align*}
		\mathcal{A}\left(\left\{\ell\in\partial\mathbb{U}:\ell(n)=u\right\}\right)=\lim_{k\to\infty}\sum_{|v|=k}\chi_{uv}(0)^{\omega_-}.
	\end{align*}
	Note that the total mass of $\mathcal{A}$ is $\mathcal{M}$.
	Denoting by $\zeta_\ell$ the height of $\ell\in\partial\mathbb{U}$, that is the time at which the lineage of this leaf goes extinct in terms of the growth-fragmentation's time, we define the area of the ball with radius $t\in\mathbb{R}_+$ centered at the origin by
	\begin{align*}
		A:t\mapsto\mathcal{A}\left(\left\{\ell\in\partial\mathbb{U}:\zeta_\ell\leq t\right\}\right).
	\end{align*}
	The increasing function $A$ will be the object of interest of the rest of the paper.
	It satisfies the following Markov-branching type property that we shall use all along this work.
	
	\begin{lem}\label{Lemma Markov-branching property of A}
		For all $t\geq 0$, it holds that
		\begin{align*}
		A(t)=\sum_{s\leq t}|\Delta_- \chi_{\emptyset}(s)|^{\omega_-}A_s\left((t-s)|\Delta_- \chi_\emptyset(s)|^{\alpha}\right),
	\end{align*}
	where $\Delta_-\chi_\emptyset(s):=\min\left\{0, \chi_\emptyset(s)-\chi_\emptyset(s-)\right\}$ (i.e. we only consider the negative jumps) and the $A_s$'s are i.i.d. copies of $A$ under $\mathcal{P}_1$, independent from $\chi_\emptyset$.
	\end{lem}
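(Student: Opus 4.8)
The plan is to split the leaf space $\partial\mathbb{U}$ along the first branching event of the Eve cell, to recognise each resulting piece as a rescaled independent copy of the intrinsic area of a cell-system started from unit size, and to sum up. Throughout, recall that the children of $\chi_\emptyset$ are indexed by $i\geq 1$, child $i$ being born at time $b_i$ with initial size $|\Delta_i|=|\Delta_-\chi_\emptyset(b_i)|$, so that the negative jump times $s\leq t$ of $\chi_\emptyset$ are exactly the birth times $b_i\leq t$.

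First I would write $\partial\mathbb{U}$ as the disjoint union over $i\geq 1$ of the cylinders $C_i:=\{\ell\in\partial\mathbb{U}:\ell(1)=i\}$; since $\mathcal{A}$ is a finite measure (of total mass $\mathcal{M}$), countable additivity gives $A(t)=\sum_{i\geq 1}\mathcal{A}\big(C_i\cap\{\ell:\zeta_\ell\leq t\}\big)$, the terms with no associated child contributing $0$ by the convention $\chi_\partial(0)^{\omega_-}=0$. Let $\mathcal{A}^{(i)}$ denote the intrinsic area measure of the cell-system $(\chi_{iv})_{v\in\mathbb{U}}$; by Bertoin's construction this cell-system is, conditionally on $(b_i,\Delta_i)$, distributed as a cell-system with Eve size $|\Delta_i|$, independent of $\chi_\emptyset$ and of the other subtrees. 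The map $\theta_i$ removing the first coordinate identifies $C_i$ with $\partial\mathbb{U}$ and pushes $\mathcal{A}$ restricted to $C_i$ onto $\mathcal{A}^{(i)}$: both are finite measures taking the value $\lim_{k\to\infty}\sum_{|w|=k}\chi_{ivw}(0)^{\omega_-}$ on each cylinder $\{\ell(n)=v\}$, and cylinders form a $\pi$-system generating the Borel $\sigma$-field, so the two measures agree. Moreover $\theta_i$ turns the height $\zeta_\ell$, measured in the time of $\mathbf{X}$, into the height $\zeta_\ell-b_i$ inside the subtree rooted at $i$; hence $\mathcal{A}\big(C_i\cap\{\zeta_\ell\leq t\}\big)=A^{(i)}(t-b_i)$, where $A^{(i)}$ is the ball-area function of $\mathcal{A}^{(i)}$, and this quantity is $0$ when $t<b_i$ because heights are nonnegative.

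It remains to rescale. By self-similarity of the cell-system, under $\mathcal{P}_c$ the family $(\chi_u)_{u\in\mathbb{U}}$ has the same law as $(c\,\widehat\chi_u(c^{\alpha}\cdot))_{u\in\mathbb{U}}$ for a cell-system $(\widehat\chi_u)_{u\in\mathbb{U}}$ of law $\mathcal{P}_1$, which moreover can be chosen independent of the data defining $c$. This multiplication by $c$ of every birth size scales the intrinsic area measure by $c^{\omega_-}$, while the time change by $c^{\alpha}$ turns a leaf of height $\widehat\zeta_\ell$ in the unit system into one of height $c^{-\alpha}\widehat\zeta_\ell$; equivalently $\{\zeta_\ell\leq r\}$ corresponds to $\{\widehat\zeta_\ell\leq c^{\alpha}r\}$ (using $\alpha<0$, so $c^{\alpha}>0$). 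Applying this with $c=|\Delta_i|$ and $r=t-b_i$ gives $A^{(i)}(t-b_i)=|\Delta_i|^{\omega_-}A_{b_i}\big((t-b_i)|\Delta_i|^{\alpha}\big)$, where $A_{b_i}$ is the ball-area function of the unit cell-system attached to child $i$. By the recursive construction these functions are i.i.d.\ copies of $A$ under $\mathcal{P}_1$ and independent of $\chi_\emptyset$. Summing over $i$, rewriting the sum over children as a sum over the negative jump times $s=b_i$ of $\chi_\emptyset$ and setting $A_s=A_{b_i}$, we obtain the announced formula; exchanging the countable sum with $\mathcal{A}$ is valid by monotone convergence since all terms are nonnegative.

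The one delicate point is the simultaneous bookkeeping of the two effects of self-similarity — multiplying all sizes by $c$ and running the cell-system on the clock $t\mapsto c^{\alpha}t$ — on the cylinder masses that define the intrinsic area measure and on the heights $\zeta_\ell$, together with the observation that the ball of radius $t$ of the whole tree cuts the $i$-th subtree at its own radius $t-b_i$ rather than $t$. Everything else is a direct consequence of the self-similarity of positive self-similar Markov processes and of the branching construction of the cell-system recalled above.
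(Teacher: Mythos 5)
Your proposal is correct and follows essentially the same route as the paper: decompose $\partial\mathbb{U}$ over the subtrees generated at the negative jump times of the Eve cell, identify each piece's area as an independent copy of $A$ under $\mathcal{P}_{|\Delta_-\chi_\emptyset(s)|}$, and apply self-similarity to rescale to $\mathcal{P}_1$. The only difference is that the paper delegates the independence and conditional-distribution step to Lemma 3.2 of \cite{BBCK16}, whereas you verify it directly from the branching construction, which is a harmless elaboration.
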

	
	\begin{proof}
		We first introduce a notation.
		If $\Delta_-\chi_\emptyset(s)\neq 0$ for some $s>0$, i.e. if the Eve cell gives birth at time $s$, we denote $\mathbb{U}_s$ the subtree generated by the newborn cell.
		We write
		\begin{align*}
			A(t)
			&=\mathcal{A}\left(\left\{\ell\in\partial\mathbb{U}:\zeta_\ell\leq t\right\}\right)
			=\sum_{s\leq t}\mathds{1}_{\left\{\Delta_-\chi_{\emptyset}(s)\neq 0\right\}}\mathcal{A}\left(\left\{\ell\in\partial\mathbb{U}_s:\zeta_\ell\leq t\right\}\right).
		\end{align*}
		Thanks to \eqref{Equation Cramer hypothesis}, Lemma 3.2 of \cite{BBCK16} entails that the areas in the above sum are independent and that for all $s>0$ the conditional distribution of $\mathcal{A}\left(\left\{\ell\in\partial\mathbb{U}_s:\zeta_\ell\leq t\right\}\right)$ given $\chi_{\emptyset}$ is that of $A(t-s)$ under $\mathcal{P}_{|\Delta_-\chi_{\emptyset}(s)|}$, which by self-similarity is identical to that of $|\Delta_-\chi_{\emptyset}(s)|^{\omega_-}A((t-s)|\Delta_-\chi_{\emptyset}(s)|^{\alpha})$ under $\mathcal{P}_1$. (The time shift $-s$ comes from the fact that the root of $\mathbb{U}_s$ is at height $s$ in $\mathbb{U}$.)
	\end{proof}
	
	We shall sometimes use Lemma \ref{Lemma Markov-branching property of A} replacing $\chi_\emptyset(s)$ by $X(s)$, or equivalently $\exp(\xi_{\tau_s})$.
	
	\paragraph{Choosing a spine according to the intrinsic area.}
	A classical tool in the study of branching processes is the so called \textit{spinal decomposition} (see \cite{S15}).
	In \cite{BBCK16}, the authors introduced one related to the intrinsic area (we refer to their work for proofs and more details).
	 Let $\mathcal{P}^-_x$ be the joint law of a cell system $(\chi_u:u\in\mathbb{U})$ starting from an Eve cell $\chi_\emptyset$ with initial size $x>0$ and a distinguished leaf $\widehat{\ell}\in\partial\mathbb{U}$, such that the law of $(\chi_u:u\in\mathbb{U})$ under $\mathcal{P}_x^-$ is absolutely continuous with respect to its law under $\mathcal{P}_x$, with density $x^{-\omega_-}\mathcal{M}$, and $\widehat{\ell}$ has conditional law $\mathcal{A}(\cdot)/\mathcal{M}$ given the cell system.
	
	The \textit{spine} is the process following in time the size of the ancestor of $\widehat{\ell}$.
	Up to modifying the genealogy, we can consider the spine to be the Eve cell of the cell-system, the law of the growth-fragmentation remains unchanged (see \cite{BBCK16} Section 4.2).
	Whereas the spine has a tilted distribution, its daughters generate, given the spine, independent growth-fragmentations with laws $(\mathcal{P}_x)_{x>0}$ according to their initial sizes.	
	Its law is that of a positive self-similar Markov process $Y^-=(Y^-(t))_{t\geq 0}$ with same index $\alpha$.
	Let $\mathbb{P}_x^-$ be the law of $Y^-$ starting from $x>0$.
	As for $X$, we have that $(Y^-(t))_{t\geq 0}=(\exp(\eta^-(\tau_t)))_{t\geq 0}$, where $\eta^-$ is a L\'evy process and $(\tau_t)_{t\geq 0}$ the associated time-change from the Lamperti's transformation. (It will always be clear from the context whether $\tau_t$ denotes the time-change associated with $Y^-$ or $X$.)
	Recall that $\Lambda$ is the L\'evy measure of $\xi$.
	The L\'evy measure of $\eta^-$, that we call $\Pi^-$, is then given by
	\begin{align}\label{Equation Pi Levy measure of eta^-}
		\Pi^-(\mathrm{d}y)=e^{\omega_-y}(\Lambda+\widetilde{\Lambda})(\mathrm{d}y),
	\end{align}
	where $\widetilde{\Lambda}$ is the push-forward of $\Lambda$ by the mapping $x\mapsto\log(1-e^x)\mathds{1}_{\{x<0\}}$.
	The Laplace exponent\footnote{See the argument of \cite{BBCK16} Lemma 2.1 for the fact that $\phi_-$ defines indeed a Laplace exponent of a L\'evy process.} of $\eta^-$ is given by
	\begin{align}\label{Equation Laplace exponent phi of eta^-}
		\phi_-(q)=\kappa(\omega_-+q),\quad q\in\mathbb{R}.
	\end{align}
	Since $\phi_-'(0)<0$ by \eqref{Equation Cramer hypothesis}, we know that $Y^-$ is absorbed at 0 in finite time almost surely. By definition of Lamperti's time-change, its absorption time $I$ is given by
	\begin{align}\label{Equation definition I}
		I=\int_0^\infty\exp\left(-\alpha\eta^-(t)\right)\mathrm{d}t.
	\end{align}
	Thus written, $I$ is known as an \textit{exponential functional} of the L\'evy process $\eta^-$, and this kind of random variables has been extensively studied (see e.g. \cite{PS16} and references therein).
	
	One of the interests of this spinal decomposition is that it allows us to study $A$, e.g. through the forthcoming lemma \ref{Lemma expect A(epsilon) = distr function of I} that states an explicit and fruitful connection between $I$ and $A$.
	
	\paragraph{Conditioning the spine to grow indefinitely.}
	A second spinal decomposition was also introduced in \cite{BBCK16}.
	Under assumption \eqref{Equation Cramer hypothesis}, the process $n\mapsto\sum_{|u|=n}\chi_u(0)^{\omega_+}$ indexed by generation is also a martingale, but now with terminal value $0$ almost surely.
	One can define the joint law $\mathcal{P}_x^+$ of $(\chi_u:u\in\mathbb{U})$ starting from an Eve cell of initial size $x>0$ and a leaf $\widehat{\ell}$, such that if $\Gamma_n$ is an event measurable with respect to the sigma-field generated by the cells at generations at most $n\geq 0$, then
	\begin{align*}
		\mathcal{P}_x^+(\Gamma_n)
		=x^{-\omega_+}\mathcal{E}_x\left(\mathds{1}_{\Gamma_n}\sum_{|u|=n+1}\chi_u(0)^{\omega_+}\right).
	\end{align*}
	In \cite{BBCK16}, the ancestors of $\widehat{\ell}$ are selected as follows: let $\widehat{\ell}(n+1)\in\mathbb{N}^{n+1}$ be the parent of $\widehat{\ell}$ at generation $n+1$, then its conditional law is
	\begin{align*}
		\mathcal{P}_x^+\left(\left.\widehat{\ell}(n+1)=v\right|(\chi_u)_{|u|\leq n}\right)
		=\frac{\chi_v(0)^{\omega_+}}{\sum_{|u|=n+1}\chi_u(0)^{\omega_+}},\qquad\forall v\in\mathbb{N}^{n+1}.
	\end{align*}
	The law of the spine is again that of a positive self-similar Markov process $(Y^+(t))_{t\geq 0}=(\exp(\eta^+(\tau_t)))_{t\geq 0}$ with index $\alpha$, that we denote by $\mathbb{P}_x^+$ when $Y^+$ starts from $x>0$.
	Assigning the role of the Eve cell to the spine instead of $\chi_\emptyset$ reorders the genealogy of the cell system, but leaves the law of $\mathbf{X}$ unchanged, see Section 4.2 in \cite{BBCK16}.
	We shall henceforth write $\mathcal{P}_x^+$ for the distribution of the reordered cell system where $\chi_\emptyset$ has law $\mathbb{P}_x^+$, and its daughters generate independent growth-fragmentations given the spine, with laws $(\mathcal{P}_y)_{y>0}$ according to their initial sizes.
	
	The L\'evy measure $\Pi^+$ of the L\'evy process $\eta^+$ is given by
	\begin{align}\label{Equation Pi Levy measure of eta^+}
		\Pi^+(\mathrm{d}y)=e^{\omega_+y}(\Lambda+\widetilde{\Lambda})(\mathrm{d}y),
	\end{align}
	and the Laplace exponent of $\eta^+$ is
	\begin{align}\label{Equation Laplace exponent phi of eta^+}
		\phi_+(q)=\kappa(\omega_++q),\quad q\geq 0.
	\end{align}
	Since $\kappa'(\omega_+)>0$, the process $\eta^+$ diverges to $\infty$ almost surely, which is therefore also the case of $Y^+$ (but does not explode in finite time since $\alpha<0$).
	
	It is possible to make sense of $Y^+$ starting from 0 as the limit of $\mathbb{P}_x^+$ as $x\to 0+$.
	We can thus define $\mathcal{P}_0^+$, as the law of the growth-fragmentation whose Eve cell has law $\mathbb{P}_0^+$, see Corollary 4.4 in \cite{BBCK16}.
	Note that in this latter result, the convergence of $\mathcal{P}_x^+$ towards $\mathcal{P}_0^+$ as $x\to 0$ is proved only in the sense of finite dimensional distributions. This does not allow us, for instance, to study $A$ under $\mathcal{P}_0^+$ as the limit of $A$ under $\mathcal{P}_x^+$
	
	To emphasize with which process we are working, we shall write $\mathbb{P}_x^+$ (respectively $\mathbb{P}_x^-$) for the law of $Y^+$ (respectively $Y^-$) started from $x>0$, and $\mathbb{E}_x^+$ (respectively $\mathbb{E}_x^-$) for the induced expectation operator.

	\section{Main results}\label{Sectionmain results}
	
	Before stating the results of this paper, we make a last assumption on the parameters.
	Recall that $\Lambda$ denotes the L\'evy measure of $\xi$.
	We shall assume that its left-tail $\overline{\Lambda}:x\mapsto\Lambda(\intervalleoo{-\infty}{-x})$ is regularly varying at $0+$ with index $-\rho$, for some $\rho>0$ that satisfies
		\begin{align}\label{Equation assumption rho}
			\max(2\omega_--\omega_+,-\alpha)<\rho<\omega_-.
		\end{align}
	This is equivalent to regular variation with index $\omega_--\rho$ of the tail $x\mapsto\Pi^-(\intervalleoo{-\infty}{\log(x)})$ as $x\to 0+$.
	Indeed, suppose \eqref{Equation assumption rho} holds and recall that by \eqref{Equation Pi Levy measure of eta^-}, we have that
	\begin{align*}
			\Pi^-(\intervalleoo{-\infty}{\log(x)})
			&=\int_{\intervalleoo{-\infty}{\log(x)}}e^{\omega_- y}\Lambda(\mathrm{d}y)+\int_{\intervalleoo{\log(1-x)}{0}}(1-e^{y})^{\omega_-}\Lambda(\mathrm{d}y).
	\end{align*}
	We see that
	\begin{align*}
		\int_{\intervalleoo{-\infty}{\log(x)}}e^{\omega_- y}\Lambda(\mathrm{d}y)
		\leq x^{\omega_-}\overline{\Lambda}(\log(x))
		=o(x^{\omega_-}). 
	\end{align*}
	To estimate the second integral, we use \eqref{Equation assumption rho} and Theorem 1.6.5 in \cite{BGT87}, which then gives us that
	\begin{align}\label{Equation tail Pi^-}
		\Pi^-(\intervalleoo{-\infty}{\log(x)})
		&\underset{x\to 0+}{\sim}\int_{\intervalleoo{\log(1-x)}{0}}(1-e^{y})^{\omega_-}\Lambda(\mathrm{d}y)
		\underset{x\to 0+}{\sim}\overline{\Lambda}\left(x\right)x^{\omega_-}\frac{\rho}{\omega_--\rho}.
	\end{align}
	The converse also follows from the same theorem, that is if $x\mapsto\Pi^-(\intervalleoo{-\infty}{\log(x)})$ has regular variation at $0+$ with index $\omega_--\rho$ then it is also the case for $\overline{\Lambda}$ with index $-\rho$.
	Indeed, coming back to the first part of \eqref{Equation tail Pi^-} and using $1-e^{-y}\sim y$ and $\log(1-y)\sim y$ as $y\to 0+$, we see that
	\begin{align*}
		\Pi^-(\intervalleoo{-\infty}{\log(x)})
		&\underset{x\to 0+}{\sim}\int_{\intervalleoo{-x}{0}}|y|^{\omega_-}\Lambda(\mathrm{d}y).
	\end{align*}
	This is enough to conclude using \cite{BGT87} Theorem 1.6.5 (see the discussion following the proof of the theorem).
	
	Similarly, one can show that regular variation with index $-\rho$ of $\overline{\Lambda}$ at $0+$ is equivalent to regular variation with index $\omega_+-\rho$ of $x\mapsto\Pi^+(\intervalleoo{-\infty}{\log(x)})$ as $x\to 0+$.
	
	Recall the definition of $I$ from \eqref{Equation definition I}.
	
	\begin{thm}\label{Theorem area GF}
		Consider a growth-fragmentation $\mathbf{X}$ such that \eqref{Equation Cramer hypothesis} and \eqref{Equation assumption rho} are satisfied.
		Then, for every $x>0$, it holds that
		\begin{align*}
			\frac{\epsilon^{-(1+\frac{\omega_-}{|\alpha|})}}{\overline{\Lambda}(\epsilon^{1/|\alpha|})}A(\epsilon)
			\underset{\epsilon\to 0}{\longrightarrow}\frac{|\alpha|\rho}{(\omega_--\rho)(\omega_-+|\alpha|-\rho)}\mathbb{E}_1^-(I^{\frac{\omega_--\rho}{\alpha}})x^{\alpha+\rho},
		\end{align*}
		$\mathcal{P}_x$-almost surely and in $\mathbb{L}^1$, where the expectation is finite.
	\end{thm}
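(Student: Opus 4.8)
\emph{Step 1: the expectation of $A(\epsilon)$.} Using the area‑biased spinal decomposition $\mathcal P^-$, the law of the height $\zeta_{\widehat\ell}$ of the distinguished leaf coincides with that of the absorption time of the spine $Y^-$; together with the biasing density $x^{-\omega_-}\mathcal M$ and self‑similarity of $Y^-$ this gives, via \eqref{Equation definition I}, the identity $\mathbb E_x[A(t)]=x^{\omega_-}\,\mathbb P_1^-\!\left(I\le t\,x^{\alpha}\right)$ (this is Lemma~\ref{Lemma expect A(epsilon) = distr function of I}). Everything then reduces to the behaviour of $\mathbb P_1^-(I\le u)$ as $u\to0+$, which I would obtain by a one–big–jump analysis of the exponential functional $I=\int_0^\infty e^{-\alpha\eta^-(t)}\,\mathrm dt$: on $\{I\le u\}$, with probability $1-o(1)$ the process $Y^-$ performs a single macroscopic downward jump, from a value of order $1$ at a Lamperti time of order $u$ down to a level $v$ of order $u^{1/|\alpha|}$; conditionally on $v$ the residual absorption time is $v^{|\alpha|}$ times an independent copy of $I$, while the time accumulated before the jump is $\le u$. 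Writing $\mathbb P_1^-(I\le u)$ through the compensation formula for the jumps of $\eta^-$, inserting the regular variation \eqref{Equation tail Pi^-} of the left tail of $\Pi^-$ and integrating by Karamata's theorem yields
\begin{align*}
\mathbb P_1^-(I\le u)\ \underset{u\to0+}{\sim}\ \frac{|\alpha|\rho}{(\omega_--\rho)(\omega_-+|\alpha|-\rho)}\,\mathbb E_1^-\!\big(I^{(\omega_--\rho)/\alpha}\big)\,u^{1+\omega_-/|\alpha|}\,\overline{\Lambda}\big(u^{1/|\alpha|}\big),
\end{align*}
the negative moment being finite by the regular variation of the left tail of $\Pi^-$ and standard bounds on negative moments of exponential functionals. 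Here the assumption $\rho>2\omega_--\omega_+$ is precisely what makes upward excursions of $\eta^-$ before the big jump negligible (by \eqref{Equation Laplace exponent phi of eta^-}, $\eta^-$ has exponential moments only up to order $\omega_+-\omega_-$), so that a single downward jump dominates. Finally, substituting $u=\epsilon x^\alpha$ and using the regular variation of $\overline{\Lambda}$ to replace $\overline{\Lambda}((\epsilon x^\alpha)^{1/|\alpha|})$ by $x^{\rho}\,\overline{\Lambda}(\epsilon^{1/|\alpha|})$, the powers of $x$ combine as $x^{\omega_-}\cdot x^{\alpha-\omega_-}\cdot x^{\rho}=x^{\alpha+\rho}$, so that $\mathbb E_x[A(\epsilon)]/f(\epsilon)\to C\,x^{\alpha+\rho}$ with $f(\epsilon):=\epsilon^{1+\omega_-/|\alpha|}\overline{\Lambda}(\epsilon^{1/|\alpha|})$ and $C$ the announced constant.

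\emph{Step 2: the fluctuations.} Fix $\epsilon>0$ and apply Lemma~\ref{Lemma Markov-branching property of A} along the Eve cell, whose driving process I call $X$; let $\mathcal G_s$ be generated by $X$ up to time $s$ together with the whole subtrees born at jumps of $X$ before $s$. The ``useful martingale'' is
\begin{align*}
M^\epsilon_s:=\sum_{r\le s}\big|\Delta_-X(r)\big|^{\omega_-}A_r\!\big((\epsilon-r)\,|\Delta_-X(r)|^{\alpha}\big)+X(s)^{\omega_-}\,\mathbb P_1^-\!\big(I\le(\epsilon-s)X(s)^{\alpha}\big),\qquad s\in[0,\epsilon],
\end{align*}
with $A_r$ i.i.d.\ copies of $A$ under $\mathcal P_1$: by the Markov property of $X$, Lemma~\ref{Lemma Markov-branching property of A} and Step~1 it is a $(\mathcal G_s)$‑martingale with $M^\epsilon_0=\mathbb E_x[A(\epsilon)]$ and, since $\mathbb P_1^-(I\le 0)=0$, with $M^\epsilon_\epsilon=A(\epsilon)$. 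Pick $p\in(1,2]$ with $\kappa(p\omega_-)<0$ (possible since $\kappa(\omega_-)=0$, $\kappa'(\omega_-)<0$); then $(\sum_{|u|=n}\chi_u(0)^{\omega_-})_n$ is bounded in $\mathbb L^p$, in particular $\mathcal M\in\mathbb L^p$, and $A(w)\le\mathcal M$ with $A(w)\downarrow0$ pointwise as $w\downarrow0$, so $\mathbb E_x[A(w)^p]\to0$ by dominated convergence. Using the conditional independence of the $A_r$ given $X$, a von Bahr–Esseen inequality, and the compensation formula for the jumps of $X$, I would bound $\mathbb E_x[\,|A(\epsilon)-\mathbb E_x[A(\epsilon)]|^{p}\,]$ by a constant times $\mathbb E_x\big[\sum_{r\le\epsilon}|\Delta_-X(r)|^{p\omega_-}\mathbb E_1[A((\epsilon-r)|\Delta_-X(r)|^{\alpha})^{p}]\big]$ plus a term coming from the Gaussian component of $\xi$, which is of smaller order. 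Splitting the jumps at $|\Delta_-X(r)|=\epsilon^{1/|\alpha|}$: the small jumps (where $\mathbb E_1[A(\cdot)^p]\le\mathbb E_1[\mathcal M^p]$) give, after the compensation formula and Karamata (using $p\omega_->\omega_->\rho$), a term of order $\epsilon^{\,1+p\omega_-/|\alpha|}\overline{\Lambda}(\epsilon^{1/|\alpha|})$; the large jumps are negligible by the a priori fact $\mathbb E_x[A(w)^p]\to0$ once $p$ is chosen close enough to $1$. Dividing by $\mathbb E_x[A(\epsilon)]^{p}\asymp f(\epsilon)^{p}$ gives $\mathbb E_x[\,|A(\epsilon)-\mathbb E_x[A(\epsilon)]|^{p}\,]/\mathbb E_x[A(\epsilon)]^{p}=O\big(\epsilon^{\,(p-1)(\rho/|\alpha|-1)}\times(\text{slowly varying})\big)$, a genuine negative power of $\epsilon$ since $\rho>-\alpha$; this is where the second standing assumption in \eqref{Equation assumption rho} is used.

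\emph{Step 3: conclusion.} Along $\epsilon_n=\theta^{n}$ with $\theta\in(0,1)$, Step~2, Markov's inequality and Borel–Cantelli give $A(\epsilon_n)/\mathbb E_x[A(\epsilon_n)]\to1$ $\mathcal P_x$‑a.s., hence $A(\epsilon_n)/f(\epsilon_n)\to C x^{\alpha+\rho}$ a.s.\ by Step~1. Since $A$ is non‑decreasing and $f$ is regularly varying at $0+$ with the strictly positive index $1+\omega_-/|\alpha|$, bracketing $A(\epsilon)$ between $A(\epsilon_{n+1})$ and $A(\epsilon_n)$ for $\epsilon\in[\epsilon_{n+1},\epsilon_n]$ and letting $\theta$ run along a sequence increasing to $1$ upgrades this to $A(\epsilon)/f(\epsilon)\to C x^{\alpha+\rho}$ $\mathcal P_x$‑a.s.\ as $\epsilon\to0$. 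As $A(\epsilon)/f(\epsilon)\ge0$ and $\mathbb E_x[A(\epsilon)/f(\epsilon)]\to C x^{\alpha+\rho}$ (Step~1), Scheffé's lemma then yields the $\mathbb L^1$ convergence.

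\emph{Expected main obstacle.} The hard part is Step~1: establishing the sharp left‑tail asymptotics of the exponential functional $I$ \emph{with the exact constant}, i.e.\ controlling the error in the one–big–jump approximation (excluding two large jumps, and — this is where $\rho>2\omega_--\omega_+$ enters — upward pre‑jump excursions) and justifying the limiting procedure that produces $\mathbb E_1^-(I^{(\omega_--\rho)/\alpha})$ together with its finiteness at this borderline negative exponent. A secondary difficulty is the $\mathbb L^p$ oscillation estimate of Step~2, where the infinitely many small jumps of $X$ must be summed with care and it is exactly $\rho>-\alpha$ that turns the relative fluctuation into a negative power of $\epsilon$.
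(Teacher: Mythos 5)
Your overall strategy coincides with the paper's: the first moment is computed through the identity $\mathcal E_1(A(t))=\mathbb P_1^-(I\le t)$ together with the left tail of the exponential functional $I$ (the paper simply cites Theorem~7 of Arista--Rivero rather than redoing the one-big-jump analysis; your identification of where $\rho>2\omega_--\omega_+$ enters is exactly how that theorem's hypothesis is verified), and the fluctuations are controlled by a $p$-th moment bound on a martingale whose jumps are $|\Delta_-X(r)|^{\omega_-}A_r(\cdots)$, followed by the compensation formula, Karamata, Borel--Cantelli along a geometric sequence, monotonicity of $A$ plus regular variation of $f$, and Scheff\'e for the $\mathbb L^1$ statement. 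The one structural difference is the centering: you compare $A(\epsilon)$ to the deterministic quantity $\mathcal E_x(A(\epsilon))$ via the closed Doob martingale $M^\epsilon_s$, whereas the paper compares $A(\epsilon)$ to the random predictable compensator $S_{\tau_\epsilon}$ of the jump sum and then proves separately (Lemma~\ref{Lemma predictable compensator as E(A(epsilon))}) that $S_{\tau_\epsilon}\sim Cf(\epsilon)$ almost surely. Your version shifts that work into the term $X(s)^{\omega_-}\mathbb P_1^-\big(I\le(\epsilon-s)X(s)^\alpha\big)$ of the Doob martingale, whose fluctuations along the path of $\xi$ (not only the Gaussian component, but also the jumps of $\xi$ acting on this term, controlled using $X(s)=1+o(1)$ and the absolute continuity of the law of $I$) must still be shown to be $o(f(\epsilon))$; this is essentially the paper's Lemma~\ref{Lemma predictable compensator as E(A(epsilon))} in different clothing and should not be dismissed as ``of smaller order'' without an argument.

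The one genuine gap is in the large-jump part of your Step~2. Writing $g(w):=\mathcal E_1(A(w)^p)$ (nondecreasing in $w$), the compensation formula and the substitution $u=\epsilon^{1/|\alpha|}v$ show that the jumps with $|\Delta_-X(r)|>\epsilon^{1/|\alpha|}$ contribute on the order of
\begin{align*}
\epsilon^{1+\frac{p\omega_-}{|\alpha|}}\,\overline{\Lambda}\big(\epsilon^{1/|\alpha|}\big)\int_1^\infty v^{p\omega_--\rho-1}\,g\big(v^{-|\alpha|}\big)\,\mathrm dv .
\end{align*}
Since $p\omega_->\rho$, this integral diverges when $g$ is merely bounded, and it can still diverge when one only knows $g(w)\to0$ as $w\to0$ (e.g.\ $g(w)=1/\log(1/w)$); so the qualitative fact ``$\mathcal E_1(A(w)^p)\to0$ by dominated convergence'' is not enough. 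What is needed is the quantitative rate $\mathcal E_1(A(w)^p)=o\big(w^{(p\omega_--\rho)/|\alpha|}\big)$, which in particular forces $p<1+|\alpha|/\omega_-$; the paper obtains it (Lemma~\ref{Lemma speed A^p}) by interpolating, via H\"older, between the first-moment asymptotics and the finiteness of $\mathcal E_1(\mathcal M^{pq})$ for $pq<\omega_+/\omega_-$. Once that estimate is inserted (with $p$ fixed close enough to $1$, or $p=p(\epsilon)\to1$ as in the paper), the rest of your argument — the relative fluctuation of order $\epsilon^{(p-1)(\rho/|\alpha|-1)}$ up to slowly varying factors, which tends to $0$ precisely because $\rho>-\alpha$, and the Borel--Cantelli/monotonicity upgrade — goes through.
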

	
	Since $\alpha<0$, we know from \cite{BBCK16} Proposition 4.1 that for any $x>0$, $\mathcal{P}_x^+$ is absolutely continuous with respect to the restriction of $\mathcal{P}_x$ to the natural filtration of the growth-fragmentation and the spine up to time $t>0$.
	Therefore the above convergence also holds $\mathcal{P}_x^+$-a.s.

	In Section 6, we shall see that the following interesting result easily follows from self-similarity.
	\begin{prop}\label{Theorem stationary area from 0}
		Suppose that \eqref{Equation Cramer hypothesis} holds.
		Then under $\mathcal{P}_0^+$, the process
		\begin{align*}
			\left(e^{\frac{\omega_-}{\alpha}u}A\left(e^u\right)\right)_{u\in\mathbb{R}}
		\end{align*}
		is stationary.
		In particular, the law of $t^{\frac{\omega_-}{\alpha}}A(t)$ does not depend on $t>0$.
	\end{prop}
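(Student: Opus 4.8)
The plan is to deduce the whole statement from a single scaling identity for the area, which is precisely what the word ``self-similarity'' refers to here: under $\mathcal{P}_0^+$, as processes,
\begin{align}\label{Equation scaling A from 0}
	\big(A(t)\big)_{t\geq 0}\ \overset{d}{=}\ \big(c^{\omega_-}A(c^\alpha t)\big)_{t\geq 0}\qquad\text{for every }c>0.
\end{align}
Granting \eqref{Equation scaling A from 0}, the Proposition is immediate. The substitution $t=e^u$ and $c=e^{v/\alpha}$ (so that $c^\alpha=e^{v}$ and $c^{\omega_-}=e^{\omega_- v/\alpha}$) rewrites \eqref{Equation scaling A from 0} as the equality in law of the two processes $(e^{\omega_-(u+v)/\alpha}A(e^{u+v}))_{u\in\mathbb{R}}$ and $(e^{\omega_-u/\alpha}A(e^u))_{u\in\mathbb{R}}$, valid for every fixed $v\in\mathbb{R}$; that is exactly the asserted stationarity. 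Taking instead $c=t^{-1/\alpha}$ in \eqref{Equation scaling A from 0} gives $t^{\omega_-/\alpha}A(t)\overset{d}{=}A(1)$, which is the last sentence of the statement.

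To establish \eqref{Equation scaling A from 0} I would show that the law $\mathcal{P}_0^+$ of the whole cell system is invariant under the space--time rescaling $S_c$ acting by $\chi_u(\cdot)\mapsto c\,\chi_u(c^\alpha\,\cdot)$ and $b_u\mapsto c^{-\alpha}b_u$ for all $u\in\mathbb{U}$. Under $\mathcal{P}_0^+$ the Eve cell is the spine $Y^+$, a positive self-similar Markov process of index $\alpha$ started from $0$; since $\mathbb{P}_0^+$ belongs to the scaling family of this pssMp, it is invariant under $Y^+(\cdot)\mapsto cY^+(c^\alpha\,\cdot)$ \emph{at the level of the whole trajectory}, and this is the point where one appeals directly to the self-similarity of $Y^+$ rather than trying to pass to the limit in the scaling relation for $\mathbb{P}_x^+$. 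Given the spine, the daughter subtrees are independent ordinary growth-fragmentations with laws $(\mathcal{P}_y)_{y>0}$ according to their initial sizes, and $S_c$ maps a daughter born at time $b_i$ with size $|\Delta_i|$ to one born at $c^{-\alpha}b_i$ with size $c|\Delta_i|$ whose subtree is itself rescaled by $S_c$; by self-similarity of $\mathcal{P}$ this daughter subtree again has the correct law $\mathcal{P}_{c|\Delta_i|}$. Iterating over generations yields the $S_c$-invariance of $\mathcal{P}_0^+$. Finally, $S_c$ leaves the label set $\partial\mathbb{U}$ untouched, multiplies every initial size $\chi_u(0)$ by $c$ (hence the intrinsic area measure $\mathcal{A}$ by $c^{\omega_-}$) and dilates every extinction time by $c^{-\alpha}$, so that
\[
	A(t)=\mathcal{A}\big(\{\ell:\zeta_\ell\leq t\}\big)\ \longmapsto\ c^{\omega_-}\mathcal{A}\big(\{\ell:c^{-\alpha}\zeta_\ell\leq t\}\big)=c^{\omega_-}A(c^\alpha t)
\]
under $S_c$; combined with $S_c$-invariance of $\mathcal{P}_0^+$ this gives \eqref{Equation scaling A from 0}. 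An equivalent route is to apply the same rescaling directly to the spinal Markov-branching decomposition of $A$ under $\mathcal{P}_0^+$, i.e. the analogue of Lemma \ref{Lemma Markov-branching property of A} with $Y^+$ in place of $\chi_\emptyset$, using that the point process of negative jumps of $Y^+$ under $\mathbb{P}_0^+$ is invariant under $(s,|\Delta|)\mapsto(c^{-\alpha}s,\,c|\Delta|)$ and that the attached i.i.d.\ copies of $A$ under $\mathcal{P}_1$ are exchangeable.

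The main obstacle, and essentially the only subtle point, is the one flagged above: $A(t)$ is a functional of the \emph{entire} genealogical tree, so \eqref{Equation scaling A from 0} cannot be obtained by passing to the limit in the corresponding identity for $\mathcal{P}_x^+$ (recall that \cite{BBCK16} only give finite-dimensional convergence of $\mathcal{P}_x^+$ to $\mathcal{P}_0^+$); it has to be read off from the self-similarity of the entrance-from-$0$ law of $Y^+$ and transported through the branching structure as above. Everything else reduces to careful bookkeeping of how masses ($\propto c$, hence areas $\propto c^{\omega_-}$) and times ($\propto c^{-\alpha}$) transform. One also uses tacitly that $A(t)<\infty$ $\mathcal{P}_0^+$-a.s.\ for each $t>0$, which is part of the standing framework and is in any case forced by \eqref{Equation scaling A from 0} once $A(1)<\infty$ a.s.
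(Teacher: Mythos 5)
Your proposal is correct and follows essentially the same route as the paper: the paper also writes $A$ under $\mathcal{P}_0^+$ via the spinal Markov-branching decomposition over the negative jumps of $Y^+$ (the analogue of Lemma \ref{Lemma Markov-branching property of A} with the spine as Eve cell), invokes the scaling invariance of the entrance law $\mathbb{P}_0^+$ at the level of the whole trajectory, and checks $A(t)<\infty$ a.s.\ exactly as you indicate. Your ``equivalent route'' at the end is precisely the paper's argument, and your bookkeeping of the exponents matches.
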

	
	The next two propositions provide some bound about the almost sure behaviour of this area and shows that almost surely, $A(t)$ does not deviate from $t^{\omega_-/|\alpha|}$ by more than a power of $|\log(t)|$:
	
	\begin{prop}\label{Theorem upper envelope A(t)}
		Suppose that \eqref{Equation Cramer hypothesis} and $\kappa(\omega_++\omega_-+\alpha)<\infty$ hold.
		For all $\delta>0$, we have that
		\begin{align*}
			\limsup_{t\to 0+\text{ or }\infty}|\log(t)|^{-1-\delta}t^{\frac{\omega_-}{\alpha}}A(t)=0,\quad\mathcal{P}_0^+\text{-a.s.}
		\end{align*}
	\end{prop}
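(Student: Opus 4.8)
The plan is to reduce everything, via the stationarity in Proposition~\ref{Theorem stationary area from 0}, to a single first-moment bound. Write $Z(u):=e^{\omega_- u/\alpha}A(e^u)$, so that $(Z(u))_{u\in\mathbb{R}}$ is stationary under $\mathcal{P}_0^+$ and $Z(0)=A(1)$. After the substitution $t=e^u$ (so $|\log t|=|u|$), the statement to be proved---at both $t\to0+$ and $t\to\infty$---is exactly that, for every $\delta>0$, $Z(u)/|u|^{1+\delta}\to0$ $\mathcal{P}_0^+$-a.s. as $u\to+\infty$ and as $u\to-\infty$. Since $A$ is non-decreasing and $\omega_-/\alpha<0$, for $u\in[n,n+1]$ we have $Z(u)\le e^{\omega_- n/\alpha}A(e^{n+1})=e^{\omega_-/|\alpha|}Z(n+1)$, so it is enough to prove the convergence along integer values of $u$. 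By stationarity $Z(n)$ has the law of $A(1)$ for every $n\in\mathbb{Z}$, so if $\mathbb{E}_{\mathcal{P}_0^+}[A(1)]<\infty$ then, for fixed $\epsilon,\delta>0$, Markov's inequality gives $\sum_{n\neq0}\mathcal{P}_0^+(Z(n)>\epsilon|n|^{1+\delta})<\infty$, and Borel--Cantelli followed by $\epsilon\downarrow0$ concludes. Thus the whole proof reduces to
\[
\mathbb{E}_{\mathcal{P}_0^+}[A(1)]<\infty .
\]

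To estimate this expectation I would use the spinal description of $\mathcal{P}_0^+$: the Eve cell is the spine, distributed as $Y^+$ under $\mathbb{P}_0^+$, and conditionally on the spine its daughters generate independent growth-fragmentations with laws $(\mathcal{P}_y)_{y>0}$ according to their birth sizes. Running the argument of Lemma~\ref{Lemma Markov-branching property of A} under $\mathcal{P}_0^+$ (now with $\chi_\emptyset$ distributed as $Y^+$), every leaf of height $\le1$ lies in the subtree of a daughter born at some time $b_i\le1$, and that subtree's contribution to $A(1)$ has, given $Y^+$, the law of $A(1-b_i)$ under $\mathcal{P}_{y_i}$ with $y_i:=|\Delta_-Y^+(b_i)|$. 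Using $A(t)\le\mathcal{M}$ pathwise together with $\mathbb{E}_y[\mathcal{M}]=y^{\omega_-}$ (the uniformly integrable martingale $\mathcal{M}(\cdot)$ starts from $y^{\omega_-}$), taking expectations yields
\[
\mathbb{E}_{\mathcal{P}_0^+}[A(1)]\ \le\ \mathbb{E}_0^+\!\left[\sum_{b_i\le1}|\Delta_-Y^+(b_i)|^{\omega_-}\right].
\]

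For the right-hand side, pass to the underlying Lévy process through $Y^+(t)=\exp(\eta^+(\tau_t))$: negative jumps of $Y^+$ before time $1$ are negative jumps of $\eta^+$ before the stopping time $\tau_1$, and $|\Delta_-Y^+(b_i)|^{\omega_-}=Y^+(b_i-)^{\omega_-}(1-e^{\Delta\eta^+})^{\omega_-}$. The compensation formula applied at $\tau_1$, followed by the Lamperti time-change ($v=\tau_s$, $\mathrm{d}v=Y^+(s)^{\alpha}\,\mathrm{d}s$, $\eta^+(v)=\log Y^+(s)$), gives
\[
\mathbb{E}_0^+\!\left[\sum_{b_i\le1}|\Delta_-Y^+(b_i)|^{\omega_-}\right]=C_+\,\mathbb{E}_0^+\!\left[\int_0^1 Y^+(s)^{\omega_-+\alpha}\,\mathrm{d}s\right],\qquad C_+:=\int_{\intervalleoo{-\infty}{0}}(1-e^y)^{\omega_-}\Pi^+(\mathrm{d}y).
\]
Here $C_+<\infty$ by a direct estimate: $(1-e^y)^{\omega_-}\le\min(1,|y|^{\omega_-})$, the mass of $\Pi^+$ away from $0$ is finite ($\Pi^+$ is a Lévy measure), the $\Lambda$-part near $0$ is controlled by $\int_{(-\epsilon,0)}|y|^{\omega_-}\Lambda(\mathrm{d}y)<\infty$ (which holds because $\kappa(\omega_-)<\infty$, see \eqref{Equation Cramer hypothesis}), and, using \eqref{Equation Pi Levy measure of eta^+}, the $\widetilde{\Lambda}$-part near $0$ is the image of $\Lambda$ restricted to a set bounded away from $0$, hence of finite mass. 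Finally, by the self-similarity of $\mathbb{P}_0^+$, $Y^+(s)$ has the law of $s^{1/|\alpha|}Y^+(1)$, so
\[
\mathbb{E}_0^+\!\left[\int_0^1 Y^+(s)^{\omega_-+\alpha}\,\mathrm{d}s\right]=\left(\int_0^1 s^{\omega_-/|\alpha|-1}\,\mathrm{d}s\right)\mathbb{E}_0^+\!\left[Y^+(1)^{\omega_-+\alpha}\right]=\frac{|\alpha|}{\omega_-}\,\mathbb{E}_0^+\!\left[Y^+(1)^{\omega_-+\alpha}\right],
\]
which is finite because the moment of order $\omega_-+\alpha>0$ of the entrance law $Y^+(1)$ is finite as soon as $\phi_+(\omega_-+\alpha)=\kappa(\omega_++\omega_-+\alpha)<\infty$, precisely our hypothesis (recall \eqref{Equation Laplace exponent phi of eta^+}).

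The main obstacle is this last point: establishing $\mathbb{E}_0^+[Y^+(1)^{\omega_-+\alpha}]<\infty$ from $\kappa(\omega_++\omega_-+\alpha)<\infty$. This is a moment estimate for the entrance law from $0$ of the positive self-similar Markov process $Y^+$, whose driving Lévy process $\eta^+$ drifts to $+\infty$; I would obtain it from the exponential martingale $(e^{q\eta^+(v)-v\phi_+(q)})_{v\ge0}$ with $q=\omega_-+\alpha$ by optional stopping at $\tau_1$---exploiting that $\tau_1$ is small precisely on the event that $Y^+(1)$ is large---or quote it from the literature on entrance laws and exponential functionals of Lévy processes. A second point that requires care is that, since $\mathcal{P}_x^+\to\mathcal{P}_0^+$ only in the sense of finite-dimensional distributions, the spine decomposition and the compensation formula above must be carried out directly under $\mathcal{P}_0^+$ (equivalently, under the entrance law $\mathbb{P}_0^+$ of $Y^+$), and not by letting $x\downarrow0$.
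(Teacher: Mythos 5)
Your proposal follows essentially the same route as the paper's proof: stationarity from Proposition \ref{Theorem stationary area from 0} plus Markov's inequality and Borel--Cantelli along a geometric sequence reduces everything to $\mathcal{E}_0^+(A(1))<\infty$, which is then obtained exactly as in Lemma \ref{Lemma E(A(1)) spectrally negative case} — jump decomposition of $A(1)$ along the spine, the bound $A_s\le\mathcal{M}_s$ with conditional expectation $1$ (the paper invokes the optional projection theorem here), the compensation formula, and self-similarity, leaving only the entrance-law moment $\mathcal{E}_0^+\bigl(Y^+(1)^{\omega_-+\alpha}\bigr)<\infty$. The one point you should repair is the parenthetical claim that $\omega_-+\alpha>0$: Proposition \ref{Theorem upper envelope A(t)} does not assume \eqref{Equation assumption rho}, so this sign is not determined, and the paper splits into two cases — when $\omega_-+\alpha\ge 0$ the positive moments of the entrance law are finite by \cite{BY02}, whereas when $\omega_-+\alpha<0$ the moment is rewritten via \cite{BY01} as a negative moment of the exponential functional $I_{\eta^+}$, and it is precisely there that the hypothesis $\kappa(\omega_++\omega_-+\alpha)<\infty$ is needed. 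With that case distinction supplied (or the literature quoted as the paper does), your argument is complete.
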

	
	\begin{prop}\label{Proposition lower envelope A(t)}
		Suppose that \eqref{Equation Cramer hypothesis} and \eqref{Equation assumption rho} hold.
		Then, there exists $q_0>0$ such that for all $q>q_0$, we have that
		\begin{align*}
			\liminf_{t\to 0+\text{ or }\infty}|\log(t)|^{q}t^{\frac{\omega_-}{\alpha}}A(t)=\infty,\quad\mathcal{P}_0^+\text{-a.s.}
		\end{align*}
	\end{prop}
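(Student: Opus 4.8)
Both regimes $t\to 0+$ and $t\to\infty$ will be treated at once, since by Proposition~\ref{Theorem stationary area from 0} both are governed by the single stationary process $u\mapsto e^{\frac{\omega_-}{\alpha}u}A(e^u)$ under $\mathcal{P}_0^+$. The first step is to discretise the limit using that $A$ is nondecreasing. Fix $r\in(0,1)$; for $t\in[r^{n+1},r^n]$ one has $A(t)\ge A(r^{n+1})$, while $t\mapsto t^{\omega_-/\alpha}$ is decreasing (because $\alpha<0$) and $t\mapsto|\log t|$ is decreasing on $(0,1)$, so that
\[
|\log t|^{q}\,t^{\omega_-/\alpha}A(t)\ \ge\ c_{r}\,n^{q}\,r^{\,n\omega_-/\alpha}A(r^{n+1}),\qquad t\in[r^{n+1},r^n],
\]
for a constant $c_{r}>0$; choosing instead $R>1$ and the intervals $[R^{n},R^{n+1}]$ (where now $|\log t|=\log t$ is increasing) gives the analogue for $t\to\infty$. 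Hence it is enough to prove that, $\mathcal{P}_0^+$-almost surely, $\liminf_{n\to\infty}n^{q}r^{\,n\omega_-/\alpha}A(r^{n})=\infty$, and likewise with $R$ in place of $r$.

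By Proposition~\ref{Theorem stationary area from 0}, for each fixed $n$ the variable $r^{\,n\omega_-/\alpha}A(r^{n})$ has the same law under $\mathcal{P}_0^+$ as $A(1)$. Since the first Borel--Cantelli lemma needs no independence, it therefore suffices to establish a polynomial lower-tail bound: there exist $\beta>0$ and $K<\infty$ with $\mathcal{P}_0^+(A(1)\le\delta)\le K\delta^{\beta}$ for all small $\delta>0$. Indeed, then for every $C>0$,
\[
\sum_{n\ge1}\mathcal{P}_0^+\bigl(n^{q}r^{\,n\omega_-/\alpha}A(r^{n})\le C\bigr)=\sum_{n\ge1}\mathcal{P}_0^+\bigl(A(1)\le Cn^{-q}\bigr)\le KC^{\beta}\sum_{n\ge1}n^{-q\beta}<\infty
\]
as soon as $q\beta>1$, so $\mathcal{P}_0^+$-a.s.\ only finitely many $n$ fail $n^{q}r^{\,n\omega_-/\alpha}A(r^{n})>C$; intersecting over $C\in\mathbb{N}$ gives the desired $\liminf=\infty$. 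This shows that $q_0:=1/\beta$ works, and reduces the whole statement to the lower-tail estimate for $A(1)$.

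The lower-tail estimate is the heart of the argument and its main obstacle. The tool is the Markov-branching property (Lemma~\ref{Lemma Markov-branching property of A}), applied under $\mathcal{P}_0^+$ with the spine $Y^+$ in the role of the Eve cell, together with self-similarity (by Proposition~\ref{Theorem stationary area from 0}, $A(t)$ has the law of $t^{\omega_-/|\alpha|}A(1)$, so $\mathcal{P}_0^+(A(1)\le\delta)=\mathcal{P}_0^+(A(2)\le 2^{\omega_-/|\alpha|}\delta)$). One bounds $A(2)$ from below by the sum, over the children of $Y^+$ born in $(0,1)$ whose size lies in a window $[\epsilon,2\epsilon]$, of independent terms $|\Delta_-Y^+(s_i)|^{\omega_-}A^{(i)}\bigl((2-s_i)|\Delta_-Y^+(s_i)|^{\alpha}\bigr)$, the $A^{(i)}$ being i.i.d.\ copies of $A$ under $\mathcal{P}_1$. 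For $\epsilon$ small the argument of each $A^{(i)}$ is large, hence at least $A^{(i)}(c_0)$ for a fixed $c_0>0$, which exceeds a fixed positive level with probability bounded away from $0$ (here one uses that $A$ under $\mathcal{P}_1$ is eventually of order the total area $\mathcal{M}$, and $\mathbb{E}_1(\mathcal{M})=1$ forces $\mathcal{P}_1(\mathcal{M}>0)>0$); so each such child contributes at least $c_1\epsilon^{\omega_-}$ with probability bounded below. On the event that the spine has not deviated from its typical short-time scaling $Y^+(s)\asymp s^{1/|\alpha|}$ on $(0,1)$, the number of such children stochastically dominates a Poisson variable whose mean is of order $\epsilon^{-\rho}$, by the regular variation of $\overline\Lambda$ at $0+$ and \eqref{Equation tail Pi^-}. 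Choosing $\epsilon\asymp\delta^{1/\omega_-}$, a Chernoff bound for this (conditionally) binomial sum shows that, up to the event that the spine deviates early, $\mathcal{P}_0^+(A(1)\le\delta)$ is at most $\exp(-c\,\delta^{-\rho/\omega_-})$; while the probability of an early deviation, measured at a scale $\lambda=\lambda(\delta)$, is polynomially small in $\lambda$ with exponent controlled by $\rho$ and $\alpha$ through \eqref{Equation assumption rho}. Balancing $\lambda(\delta)$ against the concentration term leaves a polynomial bound $K\delta^{\beta}$, with the resulting $q_0=1/\beta$ of the size dictated by $\rho,\alpha,\omega_\pm$. The delicate points---and the genuine difficulty of the proof---are exactly this balancing, the uniform lower bound on the subtree contributions, and keeping the many small-deviation estimates for $Y^+$ under $\mathcal{P}_0^+$ compatible with \eqref{Equation assumption rho}; the forthcoming identity expressing $\mathbb{E}_x(A(\epsilon))$ through the law of the exponential functional $I$ is convenient for calibrating the various thresholds.
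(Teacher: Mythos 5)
Your outer reduction is correct and is genuinely different in structure from the paper's. You reduce everything to a single quantitative estimate, a polynomial lower-tail bound $\mathcal{P}_0^+(A(1)\le\delta)\le K\delta^{\beta}$, and then conclude by the first Borel--Cantelli lemma along geometric scales, using Proposition \ref{Theorem stationary area from 0} to identify the law at every scale with that of $A(1)$; this is clean and would yield $q_0=1/\beta$. The paper never proves such a tail bound for the stationary variable. Instead it works at $t\to\infty$, invokes almost-sure upper and lower envelopes for the spine ($\underline{g}(s)<Y^+(s)<\overline{g}(s)$ eventually, proved via the tails of $\nu$ and of the exponential functional $I_{\eta^+}$), and on that eventual event lower-bounds $A(t)$ by the contributions of jumps of relative size at least $\log(t)^{-a}$ occurring in a thin window $J_t=[f(t),f(2t)]$ with $f(t)=t\log(t)^{\alpha\overline{p}}$; the count of such jumps dominates a Poisson variable with parameter a positive power of $\log t$, and Borel--Cantelli along $t=2^n$ uses the resulting stretched-exponential bound. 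The trade-off is that the paper's route replaces your quantitative tail estimate by qualitative "eventually a.s." envelope statements, at the price of a more contorted choice of window and jump threshold.

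The gap is in your third paragraph, which is where the entire difficulty of the proposition sits, and which you present as a programme rather than a proof. Two points in particular are asserted but not established. First, the claim that, on the event that the spine is typical, the number of children with size in $[\epsilon,2\epsilon]$ born in $(0,1)$ "stochastically dominates a Poisson variable" of mean $\asymp\epsilon^{-\rho}$ is delicate: the absolute jump sizes are $Y^+(s-)(1-e^{\Delta\eta^+(\tau_s)})$ and the gf-time/Lamperti-time correspondence is itself a functional of the jumps, so the conditioning event and the point process you want to count are not independent; one must either localize to relative-jump windows after a stopping time (as the paper implicitly does with $\tau_{f(t)}$) or run a compensation-formula argument, and neither is indicated. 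Second, and more importantly, the "balancing" is not done: the deviation event must be taken at a scale $\lambda(\delta)\to\infty$ (a fixed scale only gives a constant, not a power of $\delta$), and enlarging $\lambda$ simultaneously shrinks the guaranteed Lamperti-time length of the window (by a factor $\lambda^{\alpha}$) and forces the admissible relative jumps into $[\epsilon\lambda,\,c/\lambda]$, so the Poisson mean degrades to roughly $\epsilon^{-\rho}\lambda^{-\rho-|\alpha|}$ while the argument fed to each $A^{(i)}$ is only bounded below by a power of $1/\lambda$; one must check that a choice $\lambda=\delta^{-\gamma}$ keeps the mean divergent, keeps the subtree arguments above a fixed $c_0$, and still yields a polynomial gain $\delta^{\gamma\beta'}$ from the spine estimates of Lemma \ref{Lemma tails of I}. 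These verifications are exactly the content of Lemmas \ref{Lemma lower bound A} and \ref{Lemma bound number of jumps} in the paper (in its parametrization), and without them the value of $\beta$, hence of $q_0$, is not established. So the strategy is viable, but as written the central estimate remains unproven.
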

	
	The proof of Proposition \ref{Proposition lower envelope A(t)} provides an explicit $q_0$, with a rather complicated expression given in Remark \ref{Remark exponent lower envelope} in Section 5.
	We do not claim however that the bounds of the two above Propositions are optimal. 
	
	In Section \ref{Section Application to random maps}, we shall recall the connection between growth-fragmentations and a family of random surfaces, and apply the above results to the latter, refining in particular some results for the Brownian map.
	
	\section{Area near the origin}\label{Section area near the origin}
	We assume in this section that \eqref{Equation Cramer hypothesis} and \eqref{Equation assumption rho} hold.
	Our goal is to prove Theorem \ref{Theorem area GF}.
	We first look at the expectation of $A(\epsilon)$ as $\epsilon\to 0+$.
	\subsection{Behaviour of the expectation}

	Under $\mathcal{P}_1^-$, the spine is chosen according to $\mathcal{A}$. We can hence study $A$ through the lifetime of the spine $I$, thanks to the following relation.
	
	\begin{lem}\label{Lemma expect A(epsilon) = distr function of I}
		For all $t\geq 0$, it holds that
		\begin{align*}
			\mathcal{E}_1(A(t))=\mathbb{P}_1^-(I\leq t).
		\end{align*}
	\end{lem}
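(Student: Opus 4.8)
The plan is to read off the statement directly from the two defining features of the spinal change of measure $\mathcal{P}_1^-$. First, the distinguished leaf $\widehat{\ell}$ sits on the spine, whose size process is the positive self-similar Markov process $Y^-$; since $Y^-$ is absorbed at $0$ and, by Lamperti's time-change, its absorption time equals $I=\int_0^\infty\exp(-\alpha\eta^-(t))\,\mathrm{d}t$ as in \eqref{Equation definition I}, the height $\zeta_{\widehat{\ell}}$ of $\widehat{\ell}$ — the growth-fragmentation time at which this lineage goes extinct — is exactly $I$. Hence $\mathbb{P}_1^-(I\leq t)=\mathcal{P}_1^-(\zeta_{\widehat{\ell}}\leq t)$. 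Second, under $\mathcal{P}_1^-$ the cell system has density $\mathcal{M}$ with respect to $\mathcal{P}_1$ (the density $x^{-\omega_-}\mathcal{M}$ specialised to $x=1$), while conditionally on the cell system the leaf $\widehat{\ell}$ is distributed according to $\mathcal{A}(\cdot)/\mathcal{M}$. The key observation is that the factor $\mathcal{M}$ produced by the change of measure cancels the normalisation $1/\mathcal{M}$ of the conditional leaf law.

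Then I would simply compute, integrating first over the position of $\widehat{\ell}$ and then over the cell system, and applying Tonelli's theorem (all quantities being nonnegative):
\begin{align*}
\mathcal{P}_1^-(\zeta_{\widehat{\ell}}\leq t)
=\mathcal{E}_1\!\left[\mathcal{M}\int_{\partial\mathbb{U}}\mathds{1}_{\left\{\zeta_\ell\leq t\right\}}\,\frac{\mathcal{A}(\mathrm{d}\ell)}{\mathcal{M}}\right]
=\mathcal{E}_1\!\left[\mathcal{A}\!\left(\left\{\ell\in\partial\mathbb{U}:\zeta_\ell\leq t\right\}\right)\right]
=\mathcal{E}_1(A(t)).
\end{align*}
Here the product $\mathcal{M}\cdot\mathcal{A}(\cdot)/\mathcal{M}$ is understood to be $0$ on the event $\{\mathcal{M}=0\}$, on which $\mathcal{A}$ vanishes identically and which is $\mathcal{P}_1^-$-negligible, so this causes no ambiguity. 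Combining with $\mathbb{P}_1^-(I\leq t)=\mathcal{P}_1^-(\zeta_{\widehat{\ell}}\leq t)$ from the previous paragraph yields the claim.

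There is no serious obstacle here; the only genuine ingredient beyond bookkeeping is the identification $\zeta_{\widehat{\ell}}=I$, which is immediate from the description of the spine in \cite{BBCK16} together with the definition of the height of a leaf, and the only points requiring (routine) care are the invocation of Tonelli's theorem and the handling of the null set $\{\mathcal{M}=0\}$.
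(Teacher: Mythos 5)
Your argument is correct and is exactly the content of the citation the paper uses in place of a proof: Lemmas 2.1 and 2.2 of \cite{G18} are precisely your two ingredients, namely the identification of the height $\zeta_{\widehat{\ell}}$ of the tagged leaf with the absorption time $I$ of the spine, and the many-to-one computation in which the density $\mathcal{M}$ cancels the normalisation $\mathcal{A}(\cdot)/\mathcal{M}$ of the conditional leaf law. So you have simply written out in full the argument the paper delegates to \cite{G18}.
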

	This is a straightforward consequence of Lemmas 2.1 and 2.2 in \cite{G18}.
	Thanks to Lemma \ref{Lemma expect A(epsilon) = distr function of I}, we can deduce
	  
	\begin{lem}\label{Lemma asymp expect A(espilon)}
		It holds that that
		\begin{align*}
			\mathcal{E}_1(A(\epsilon))\underset{\epsilon\to 0}{\sim}\frac{|\alpha|\rho}{(\omega_--\rho)(\omega_-+|\alpha|-\rho)}\mathbb{E}_1^-(I^{\frac{\omega_--\rho}{\alpha}})\epsilon^{1+\frac{\omega_-}{|\alpha|}}\overline{\Lambda}(\epsilon^{1/|\alpha|}),
		\end{align*}
		where the expectation on the right-hand side is finite.
	\end{lem}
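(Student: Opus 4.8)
The plan is to combine Lemma \ref{Lemma expect A(epsilon) = distr function of I}, which identifies $\mathcal{E}_1(A(\epsilon))$ with $\mathbb{P}_1^-(I\leq\epsilon)$, with a Tauberian/small-time estimate for the exponential functional $I=\int_0^\infty\exp(-\alpha\eta^-(t))\,\mathrm{d}t$. The key observation is that $I$ is small precisely when $\eta^-$ makes a very large negative jump early on, so the small-time behaviour of $\mathbb{P}_1^-(I\leq\epsilon)$ is governed by the left tail of the Lévy measure $\Pi^-$ near $-\infty$, equivalently (via \eqref{Equation tail Pi^-}) by the left tail $\overline{\Lambda}$ of $\Lambda$ at $0+$. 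First I would recall the standard first-passage / self-similarity identity for exponential functionals: writing $\eta^-(t)=\eta^-(t)$ and conditioning on the first jump of size below some threshold, or more cleanly using the Markov property at the time of the extreme jump, one gets a multiplicative renewal equation $I\overset{d}{=}\int_0^{T}e^{-\alpha\eta^-(s)}\mathrm{d}s + e^{-\alpha\eta^-(T)}I'$ for a suitable stopping time $T$, with $I'$ an independent copy of $I$. The term carrying the small-$\epsilon$ mass is $e^{-\alpha\eta^-(T)}I'$ with $\eta^-(T)$ very negative; since $\alpha<0$, $e^{-\alpha\eta^-(T)}=e^{|\alpha|\eta^-(T)}$ is then very small.

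Concretely, I would argue that as $\epsilon\to 0$,
\begin{align*}
\mathbb{P}_1^-(I\leq\epsilon)\sim \mathbb{E}_1^-\Big(\Pi^-\big(\,(-\infty,\ \text{something depending on }\epsilon,I')\,\big)\Big),
\end{align*}
i.e. the event $\{I\leq\epsilon\}$ is, to leading order, the event that a single jump of $\eta^-$ is large enough to crush the remaining exponential functional below $\epsilon$. Making this precise, the leading contribution should be
\begin{align*}
\mathbb{P}_1^-(I\leq\epsilon)\sim c\cdot\mathbb{E}_1^-\big((I')^{(\omega_--\rho)/\alpha}\big)\,\overline{\Lambda}(\epsilon^{1/|\alpha|})\,\epsilon^{\,\omega_-/|\alpha|}\cdot\epsilon,
\end{align*}
where the power $\epsilon^{1+\omega_-/|\alpha|}$ and the slowly-varying factor $\overline{\Lambda}(\epsilon^{1/|\alpha|})$ come from substituting the regularly-varying tail of $\Pi^-$ (with index $\omega_--\rho$ in the variable, by the discussion preceding Theorem \ref{Theorem area GF}) into the renewal identity, and $\mathbb{E}_1^-((I')^{(\omega_--\rho)/\alpha})$ arises from integrating the jump threshold against the law of the residual functional. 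The exact constant $|\alpha|\rho/((\omega_--\rho)(\omega_-+|\alpha|-\rho))$ would be pinned down by carefully tracking the change of variables: the $\rho/(\omega_--\rho)$ factor is exactly the one appearing in \eqref{Equation tail Pi^-} relating $\overline{\Lambda}$ to the tail of $\Pi^-$, and the extra $|\alpha|/(\omega_-+|\alpha|-\rho)$ comes from a Tauberian step converting the density-level tail estimate into the distribution function, using $\int_0^\infty t^{\,\omega_-/|\alpha|}\,\mathbb{P}(I'\in\mathrm{d}t)$-type moments. I would also need to check finiteness of $\mathbb{E}_1^-(I^{(\omega_--\rho)/\alpha})$, i.e. of a negative moment of $I$ of order $(\rho-\omega_-)/|\alpha|>-1$; negative moments of exponential functionals of Lévy processes drifting in the favourable direction are standard (see \cite{PS16}), and the hypothesis $\rho>-\alpha$ in \eqref{Equation assumption rho} ensures the exponent $(\omega_--\rho)/\alpha$ lies in the admissible range, so this moment is finite.

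The main obstacle I expect is the rigorous justification that the one-big-jump heuristic is exact to leading order: one must show that multiple-jump contributions, and the contribution of the continuous (Brownian/drift) part of $\eta^-$, are negligible compared to $\overline{\Lambda}(\epsilon^{1/|\alpha|})\epsilon^{1+\omega_-/|\alpha|}$. This is where \eqref{Equation assumption rho} does the real work: the condition $\rho<\omega_-$ guarantees the tail of $\Pi^-$ is "heavy enough" at $0$ that a single jump dominates, while $2\omega_--\omega_+<\rho$ (equivalently $\omega_+-\rho<\omega_-$, controlling the $\Pi^+$ tail and hence integrability against the spine) and $-\alpha<\rho$ control the residual-functional moments; one would bound the subleading terms by splitting $I$ at the first jump of size below $\epsilon^{1/|\alpha|}$ times a large constant, using the strong Markov property, and estimating the probability of two or more such jumps by the square of a regularly varying quantity, which is $o$ of the single-jump term. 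I would structure the argument as: (i) restate Lemma \ref{Lemma expect A(epsilon) = distr function of I} to reduce to $\mathbb{P}_1^-(I\leq\epsilon)$; (ii) establish upper and lower bounds on $\mathbb{P}_1^-(I\leq\epsilon)$ via the one-big-jump decomposition, each matching the claimed asymptotic up to $1+o(1)$; (iii) invoke the regular-variation input \eqref{Equation tail Pi^-} and a Tauberian argument (Karamata / \cite{BGT87} Theorem 1.6.5 and its companions) to identify the constant and the slowly-varying correction; (iv) verify finiteness of the moment $\mathbb{E}_1^-(I^{(\omega_--\rho)/\alpha})$ using known results on exponential functionals. The delicate analytic point throughout will be handling the slowly-varying factor $\overline{\Lambda}$ uniformly when integrating against the law of $I'$, for which a uniform convergence theorem for regularly varying functions (again \cite{BGT87}) together with a domination argument using the finite moment of $I'$ should suffice.
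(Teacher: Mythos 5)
Your plan is essentially the paper's proof: reduce to $\mathbb{P}_1^-(I\leq\epsilon)$ via Lemma \ref{Lemma expect A(epsilon) = distr function of I}, feed in the regular variation of the tail of $\Pi^-$ from \eqref{Equation tail Pi^-}, and read off the constant as the product of the $\rho/(\omega_--\rho)$ factor and the Tauberian factor $1/(1+\gamma)$ with $\gamma=(\omega_--\rho)/|\alpha|$. The only difference is that the hard analytic step you propose to carry out by hand --- the rigorous one-big-jump asymptotic $\mathbb{P}_1^-(I\leq t)\sim\frac{\mathbb{E}_1^-(I^{-\gamma})}{1+\gamma}\,t\,\Pi_\alpha^-(\intervalleoo{-\infty}{\log(1/t)})$ --- is exactly Theorem 7 of \cite{AR15}, which the paper cites as a black box; reproving it is a substantial undertaking and not needed. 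One small correction: the finiteness of $\mathbb{E}_1^-(I^{(\omega_--\rho)/\alpha})$ and the hypothesis of \cite{AR15} are guaranteed by $\kappa(2\omega_--\rho)<0$, i.e.\ by the inequality $2\omega_--\omega_+<\rho$ in \eqref{Equation assumption rho}, not by $-\alpha<\rho$ as you suggest.
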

	
	\begin{proof}
		Let $\Pi_\alpha^-$ be the L\'evy measure of $|\alpha|\eta^-$.
		The behaviour of $\mathbb{P}_1^-(I\leq t)$ as $t\to 0$ is given in Theorem 7 of \cite{AR15}.
		More precisely, provided that $\Pi_\alpha^-(\intervalleoo{-\infty}{-(x+y)})/\Pi_\alpha^-(\intervalleoo{-\infty}{-x})\sim e^{-\gamma y}$ as $x\to\infty$ with $\mathbb{E}_1^-(\exp(|\alpha|\gamma\eta^-(1)))<1$ (corresponding to $|\alpha|\gamma\in\intervalleoo{0}{\omega_+-\omega_-}$ by \eqref{Equation Laplace exponent phi of eta^-}), it holds that 
		\begin{align*}
			\mathbb{P}_1^-(I\leq t)
			\underset{t\to 0}{\sim}\frac{\mathbb{E}_1^-(I^{-\gamma})}{1+\gamma}t\Pi_\alpha^-(\intervalleoo{-\infty}{\log(1/t)}).
		\end{align*}
		By \eqref{Equation tail Pi^-}, we have that
		\begin{align*}
			\Pi_\alpha^-(\intervalleoo{-\infty}{-x})
			&\underset{x\to \infty}{\sim}\overline{\Lambda}\left(e^{x/\alpha}\right)e^{\omega_-x/\alpha}\frac{\rho}{\omega_--\rho}.
		\end{align*}
		We thus see that $\Pi_\alpha(\intervalleoo{-\infty}{-x-y})/\Pi_\alpha(\intervalleoo{-\infty}{-x})\to \exp(-\frac{\omega_--\rho}{|\alpha|} y)$ as $x\to\infty$.
		It remains to check that $\omega_--\rho<\omega_+-\omega_-$, which is true since $2\omega_--\omega_+<\rho<\omega_-$ by \eqref{Equation assumption rho}.
		We therefore get by \cite{AR15} Theorem 7 that 
		\begin{align*}
			\mathbb{P}_1^-(I\leq \epsilon)
			&\underset{\epsilon\to 0+}{\sim}\left(1+\frac{\omega_--\rho}{|\alpha|}\right)^{-1}\mathbb{E}_1^-\left(I^{-\frac{\omega_--\rho}{|\alpha|}}\right)\epsilon\Pi_\alpha^-(\intervalleoo{-\infty}{\log(\epsilon)})\\
			&\underset{\epsilon\to 0+}{\sim}\frac{|\alpha|\rho}{(\omega_--\rho)(\omega_-+|\alpha|-\rho)}\mathbb{E}_1^-\left(I^{-\frac{\omega_--\rho}{|\alpha|}}\right)\epsilon^{1+\frac{\omega_-}{|\alpha|}}\overline{\Lambda}(\epsilon^{1/|\alpha|}),
		\end{align*}
		as claimed, where the expectation is finite.
	\end{proof}
	
	We shall need a bit more than the speed of convergence to 0 of the first moment.
	
	\begin{lem}\label{Lemma speed A^p}
		For $p=p(\epsilon):=1+1/\sqrt{|\log(\epsilon)|}$, for all $0<q<1+\frac{\omega_--\rho}{|\alpha|}$ it holds that
		\begin{align*}
			\mathcal{E}_1\left(A(\epsilon)^p\right)=o\left(\epsilon^q\right),\quad\text{as}\quad\epsilon\to 0.
		\end{align*}
	\end{lem}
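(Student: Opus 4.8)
The plan is to interpolate between the first moment of $A(\epsilon)$, whose precise asymptotics are supplied by Lemma~\ref{Lemma asymp expect A(espilon)}, and a \emph{fixed} higher moment of order $P>1$ that stays bounded in $\epsilon$. Since the exponent $p(\epsilon)$ tends to $1$ only at the slow rate $1/\sqrt{|\log\epsilon|}$, the weight placed on the bounded factor is negligible, so the bound is governed by the first moment, which already decays like $\epsilon^{\beta_0}$ with $\beta_0:=1+\frac{\omega_--\rho}{|\alpha|}>1$ (by \eqref{Equation assumption rho}).

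First I would record the regular variation we need. As $\overline{\Lambda}$ is regularly varying at $0+$ with index $-\rho$, the function $\epsilon\mapsto\overline{\Lambda}(\epsilon^{1/|\alpha|})$ is regularly varying at $0+$ with index $-\rho/|\alpha|$, so Lemma~\ref{Lemma asymp expect A(espilon)} gives $\mathcal{E}_1(A(\epsilon))=\epsilon^{\beta_0}L(\epsilon)$ with $L$ slowly varying at $0+$. I would then fix $P\in(1,\omega_+/\omega_-)$, which is possible because $\omega_-<\omega_+$; since $P\omega_-\in(\omega_-,\omega_+)$ we have $\kappa(P\omega_-)<0$, and the intrinsic-area martingale $(\mathcal{M}(n))$ is bounded in $\mathbb{L}^P$ (a Biggins-type $\mathbb{L}^p$ criterion for additive martingales of growth-fragmentations, as in \cite{BBCK16}; the integrability needed is guaranteed by $\kappa$ being finite past $\omega_+>P\omega_-$). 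As $A(\epsilon)\le\mathcal{A}(\partial\mathbb{U})=\mathcal{M}$, this gives $\sup_{\epsilon>0}\mathcal{E}_1(A(\epsilon)^P)\le\mathcal{E}_1(\mathcal{M}^P)=:C_P<\infty$.

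With these in hand, for $\epsilon$ small enough that $1<p(\epsilon)<P$ I would apply the log-convexity of $r\mapsto\log\|A(\epsilon)\|_r$ between $r=1$ and $r=P$: writing $\delta=\delta(\epsilon):=1/\sqrt{|\log\epsilon|}$ so that $p=1+\delta$, and letting $\theta\in(0,1)$ solve $\tfrac1p=\theta+\tfrac{1-\theta}{P}$, a direct computation gives $\theta p=1-\tfrac{\delta}{P-1}$ and $(1-\theta)p/P=\tfrac{\delta}{P-1}$, hence
\begin{align*}
\mathcal{E}_1\!\left(A(\epsilon)^p\right)=\|A(\epsilon)\|_p^p\le\|A(\epsilon)\|_1^{\theta p}\|A(\epsilon)\|_P^{(1-\theta)p}\le\mathcal{E}_1(A(\epsilon))^{\,1-\frac{\delta}{P-1}}\,C_P^{\frac{\delta}{P-1}}.
\end{align*}
Substituting $\mathcal{E}_1(A(\epsilon))=\epsilon^{\beta_0}L(\epsilon)$ and dividing by $\epsilon^q$ yields
\begin{align*}
\epsilon^{-q}\mathcal{E}_1\!\left(A(\epsilon)^p\right)\le\epsilon^{\,\beta_0-q-\beta_0\delta/(P-1)}\,L(\epsilon)^{1-\delta/(P-1)}\,C_P^{\delta/(P-1)}.
\end{align*}
Because $\delta(\epsilon)\to0$ and $q<\beta_0$, the exponent of $\epsilon$ is eventually at least $(\beta_0-q)/2>0$; meanwhile $C_P^{\delta/(P-1)}\to1$, and since $L$ is slowly varying with $1-\delta/(P-1)$ staying in a compact subinterval of $(0,1]$, one has $L(\epsilon)^{1-\delta/(P-1)}=o(\epsilon^{-(\beta_0-q)/4})$, so the right-hand side tends to $0$. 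This is precisely $\mathcal{E}_1(A(\epsilon)^p)=o(\epsilon^q)$.

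The only genuinely non-routine ingredient is the uniform higher-moment bound, i.e.\ $\mathcal{M}\in\mathbb{L}^P$ for some $P>1$ (equivalently $\limsup_{\epsilon\to0}\mathcal{E}_1(A(\epsilon)^P)<\infty$), which is where I expect the real work to lie; everything else is the interpolation inequality together with the regular variation from Lemma~\ref{Lemma asymp expect A(espilon)} and the observation that $p(\epsilon)-1=1/\sqrt{|\log\epsilon|}$ is small enough that the correction $\epsilon^{-O(p(\epsilon)-1)}$ is $\epsilon^{o(1)}$. If one preferred to avoid quoting $\mathbb{L}^p$-boundedness of additive martingales, the weaker bound $\limsup_{\epsilon\to0}\mathcal{E}_1(A(\epsilon)^P)<\infty$ can instead be extracted from the Markov-branching identity of Lemma~\ref{Lemma Markov-branching property of A} by a recursive (von Bahr--Esseen type) estimate, valid for $P\le2$, which suffices since $\omega_+/\omega_->1$ only needs a single $P>1$.
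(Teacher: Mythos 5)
Your proof is correct, and it rests on exactly the same two ingredients as the paper's: the first-moment asymptotics of Lemma \ref{Lemma asymp expect A(espilon)} and the finiteness of $\mathcal{E}_1(\mathcal{M}^P)$ for $P<\omega_+/\omega_-$ (this is precisely Lemma 2.3 of \cite{BBCK16}, which the paper invokes at the corresponding step, so your ``Biggins-type criterion'' is available off the shelf and needs no further work). The mechanics differ, though: the paper truncates at level $\epsilon^{-n}$, bounds the low part by $\mathcal{E}_1(A(\epsilon))\,\epsilon^{-n(p-1)}$ using that $\epsilon^{-n(p-1)}=e^{n\sqrt{|\log\epsilon|}}=\epsilon^{o(1)}$, and kills the high part by H\"older plus Markov with $n$ arbitrarily large; you instead apply Lyapunov's interpolation inequality between $\mathbb{L}^1$ and $\mathbb{L}^P$ directly, and your exponent bookkeeping ($\theta p=1-\delta/(P-1)$, $(1-\theta)p/P=\delta/(P-1)$) checks out, as does the handling of the slowly varying factor. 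Your route is shorter and yields an explicit quantitative bound $\mathcal{E}_1(A(\epsilon)^p)\le \mathcal{E}_1(A(\epsilon))^{1-\delta/(P-1)}C_P^{\delta/(P-1)}$ in one line, whereas the paper's truncation argument is the elementary hands-on version of the same interpolation; both exploit the same key point that $p(\epsilon)-1=1/\sqrt{|\log\epsilon|}$ decays slowly enough that the correction is $\epsilon^{o(1)}$. The closing remark about a von Bahr--Esseen alternative is unnecessary but harmless.
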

	
	\begin{proof}
		Let $n\geq 1$ be arbitrary large. We write
		\begin{align*}
			\mathcal{E}_1\left(A(\epsilon)^p\right)
			&\leq\mathcal{E}_1\left(A(\epsilon)\mathds{1}_{\left\{A(\epsilon)\leq \epsilon^{-n}\right\}}\right)\times \epsilon^{-n(p-1)}+\mathcal{E}_1\left(A(\epsilon)^p\mathds{1}_{\left\{A(\epsilon)>\epsilon^{-n}\right\}}\right).
		\end{align*}
		By Lemma \ref{Lemma asymp expect A(espilon)}, the first term is of order $\epsilon^{1+\frac{\omega_-}{-\alpha}-n(p-1)}\overline{\Lambda}(\epsilon^{1/|\alpha|})$, with $p-1=1/\sqrt{-\log(\epsilon)}$ going to 0 with $\epsilon$. It remains to bound the second term. Take $q,q'>1$ such that $1/q+1/q'=1$ and $pq<\omega_+/\omega_-$, H\"older's Inequality then yields that
		\begin{align*}
			\mathcal{E}_1\left(A(\epsilon)^p\mathds{1}_{\left\{A(\epsilon)>\epsilon^{-n}\right\}}\right)
			&\leq\mathcal{E}_1\left(A(\epsilon)^{pq}\right)^{1/q}\times\mathcal{P}(A(\epsilon)>\epsilon^{-n})^{1/q'}.
		\end{align*}
		Since $pq<\omega_+/\omega_-$ we know that $\mathcal{E}_1(A(\epsilon)^{pq})<\infty$ by Lemma 2.3 in \cite{BBCK16}. Markov's Inequality shows that the probability on the right-hand side is bounded from above by
		\begin{align*}
			\mathcal{E}_1(A(\epsilon))^{1/q'}\times\epsilon^{n/q'}.
		\end{align*}
		Since $n$ was chosen arbitrary large, this concludes the proof.
	\end{proof}
	
	The proof above would work for any $p(\epsilon)$ converging to 1 from above as $\epsilon\to 0$, but will shall use it with this specific choice of $p(\epsilon)$ in order to have a technical result, that we record now for later use.
	
	\begin{lem}\label{Lemma equiv p(epsilon)}
		For $p=p(\epsilon)$ as in Lemma \ref{Lemma speed A^p}, it holds that
		\begin{align*}
			\epsilon^{1+\frac{p\omega_-}{|\alpha|}}\overline{\Lambda}(\epsilon^{1/|\alpha|})=o\left(\epsilon^{p(1+\frac{\omega_-}{|\alpha|})}\overline{\Lambda}(\epsilon^{1/|\alpha|})^{p}\right),\quad\text{as }\epsilon\to 0+.
		\end{align*}
	\end{lem}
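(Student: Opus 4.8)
The plan is to reduce the claimed relation to one elementary estimate. Dividing the left-hand side of the asserted identity by the argument of the $o(\cdot)$ on the right, the two powers of $\overline{\Lambda}(\epsilon^{1/|\alpha|})$ combine to the exponent $1-p$, while in the powers of $\epsilon$ the terms $p\omega_-/|\alpha|$ cancel and leave the exponent $1-p$ as well; hence the ratio equals exactly $\bigl(\epsilon\,\overline{\Lambda}(\epsilon^{1/|\alpha|})\bigr)^{1-p(\epsilon)}$, and it suffices to prove that this quantity tends to $0$ as $\epsilon\to 0+$.

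Next I would bound $\epsilon\,\overline{\Lambda}(\epsilon^{1/|\alpha|})$ from below. Since $\overline{\Lambda}$ is regularly varying at $0+$ with index $-\rho$ and, by \eqref{Equation assumption rho}, $\rho>-\alpha=|\alpha|$, I pick $\rho'\in(|\alpha|,\rho)$ and write $\overline{\Lambda}(x)=x^{-\rho}L(x)$ with $L$ slowly varying at $0+$; using that $x^{-\eta}L(x)\to\infty$ as $x\to 0+$ for every $\eta>0$ (equivalently Potter's bounds, \cite{BGT87}), one gets $\overline{\Lambda}(\epsilon^{1/|\alpha|})\geq\epsilon^{-\rho'/|\alpha|}$ for all $\epsilon$ small enough, so that $\epsilon\,\overline{\Lambda}(\epsilon^{1/|\alpha|})\geq\epsilon^{-b}\geq 1$ for $\epsilon$ small, where $b:=\rho'/|\alpha|-1>0$.

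Finally, since $1-p(\epsilon)=-1/\sqrt{|\log\epsilon|}<0$ and $t\mapsto t^{1-p(\epsilon)}$ is decreasing on $(0,\infty)$, the previous inequality reverses under this power, giving
\begin{align*}
\bigl(\epsilon\,\overline{\Lambda}(\epsilon^{1/|\alpha|})\bigr)^{1-p(\epsilon)}
\leq\bigl(\epsilon^{-b}\bigr)^{1-p(\epsilon)}
=\epsilon^{\,b/\sqrt{|\log\epsilon|}}
=\exp\!\Bigl(-b\sqrt{|\log\epsilon|}\Bigr)
\xrightarrow[\epsilon\to 0+]{}0,
\end{align*}
which is the claim. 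I do not expect any genuine difficulty here: the only place the hypothesis is used is the inequality $\rho>|\alpha|$, which is precisely what makes $\epsilon\,\overline{\Lambda}(\epsilon^{1/|\alpha|})$ blow up (rather than vanish) as $\epsilon\to 0+$, so that the vanishing negative exponent $1-p(\epsilon)$ drives the ratio to $0$; the only care needed is to keep track of the sign of $1-p(\epsilon)$ when reversing inequalities.
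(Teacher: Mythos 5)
Your proof is correct and follows essentially the same route as the paper: both reduce the claim to showing that $\bigl(\epsilon\,\overline{\Lambda}(\epsilon^{1/|\alpha|})\bigr)^{1-p(\epsilon)}\to 0$ and then exploit the regular variation of $\overline{\Lambda}$ with index $-\rho$ together with the hypothesis $\rho>|\alpha|$ from \eqref{Equation assumption rho}. The only cosmetic difference is that the paper takes logarithms and lets the $\log(\epsilon)$ terms dominate the slowly varying contribution, whereas you absorb the slowly varying factor via a Potter-type lower bound $\overline{\Lambda}(\epsilon^{1/|\alpha|})\geq\epsilon^{-\rho'/|\alpha|}$; both are sound.
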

	
	\begin{proof}
		By $\eqref{Equation assumption rho}$ and Theorem 1.4.1(iii) in \cite{BGT87}, we know that there exists a function $\ell:\intervalleoo{0}{\infty}\to\intervalleoo{0}{\infty}$ with slow variation at $0$ such that $\overline{\Lambda}(\epsilon)\sim\epsilon^{-\rho}\ell(\epsilon)$.
		We write
		\begin{align*}
			&\log\left(\frac{\epsilon^{1+\frac{p\omega_-}{|\alpha|}}\overline{\Lambda}(\epsilon^{1/|\alpha|})}{\epsilon^{p(1+\frac{\omega_-}{|\alpha|})}\overline{\Lambda}(\epsilon^{1/|\alpha|})^{p}}\right)
			=(1-p)\log(\epsilon)+(1-p)\log\left(\overline{\Lambda}(\epsilon^{1/|\alpha|})\right)\\
			&\hspace{3.8cm}\underset{\epsilon\to 0+}{\sim}(1-p)\log(\epsilon)+\frac{(1-p)\rho}{\alpha}\log(\epsilon)+(1-p)\log(\ell(\epsilon^{1/|\alpha|})).
		\end{align*}
		Since $\ell$ has slow variation, the terms with $\log(\epsilon)$ dominates.
		By definition of $p$ and since $|\alpha|<\rho$ by \eqref{Equation assumption rho}, we see that $(1-p)(1+\rho/\alpha)\log(\epsilon)=(1+\rho/\alpha)\sqrt{|\log(\epsilon)|}\to -\infty$ as $\epsilon\to 0+$, which entails the claim.
	\end{proof}

	\subsection{The almost sure behaviour}
	
	Throughout this subsection, we shall only work under $\mathcal{P}_1$, since the other cases follow from the self-similarity.
	Indeed, under $\mathcal{P}_x$, we have $(A(t))_{t\geq 0}\stackrel{d}{=}(x^{\omega_-}A(t x^\alpha))_{t\geq 0}$ under $\mathcal{P}_1$.
	It means that if Theorem \ref{Theorem area GF} is true under $\mathcal{P}_1$, we easily deduce that under $\mathcal{P}_x$,
	\begin{align*}
		\frac{\epsilon^{-(1+\frac{\omega_-}{|\alpha|})}}{\overline{\Lambda}\left(\epsilon^{1/|\alpha|}\right)}A(\epsilon)
		&\stackrel{d}{=}x^{\alpha}\frac{\overline{\Lambda}\left((\epsilon x^\alpha)^{1/|\alpha|}\right)}{\overline{\Lambda}\left(\epsilon^{1/|\alpha|}\right)}\cdot\frac{(\epsilon x^{\alpha})^{-(1+\frac{\omega_-}{|\alpha|})}}{\overline{\Lambda}\left((\epsilon x^\alpha)^{1/|\alpha|}\right)}A(\epsilon x^{\alpha}),\quad\text{under }\mathcal{P}_1\\
		&\!\!\!\!\underset{\epsilon\to 0+}{\sim}x^{\rho+\alpha}\lim_{\epsilon\to 0+}\frac{(\epsilon x^{\alpha})^{-(1+\frac{\omega_-}{|\alpha|})}}{\overline{\Lambda}\left((\epsilon x^\alpha)^{1/|\alpha|}\right)}A(\epsilon x^{\alpha}),\hspace{1.3cm}\text{under }\mathcal{P}_1,
	\end{align*}
	where we used the assumption of regular variation of $\overline{\Lambda}$ at $0+$, as written in \eqref{Equation assumption rho}.
	
	Recall that $\xi$ is the L\'evy process associated with $X$ by Lamperti's transformation, and that under $\mathcal{P}_1$, $\chi_\emptyset$ is distributed as $X$.
	Let $\sigma:s\mapsto\int_0^s e^{-\alpha\xi(u)}\mathrm{d}u$ be the inverse change time of $(\tau_t)_{t\geq 0}$.
	Define the compensated process $(M_t)_{t\geq 0}$ as
	\begin{align*}
		M_t
		:=&\sum_{s\leq t}(\Delta_- e^{\xi(s)})^{\omega_-}A_s\left((\sigma_t-\sigma_s)(\Delta_-e^{\xi(s)})^\alpha\right)-S_t,\\
		\shortintertext{where the $A_s$'s are i.i.d. copies of $A$ under $\mathcal{P}_{1}$, independent from $\xi$, with}
		S_t:=&\int_0^t\mathrm{d}se^{\omega_-\xi(s)}\int_{\intervalleoo{-\infty}{0}}\Lambda(\mathrm{d}y)(1-e^y)^{\omega_-}\mathbb{P}_1^-\left(I\leq (1-e^y)^{\alpha}e^{\alpha\xi(s)}(\sigma_t-\sigma_s)|\mathcal{F}_s\right),
	\end{align*}
	where $I$ is independent from $\xi$ and $(\mathcal{F}_s)_{s\geq 0}$ is the natural filtration of $\xi$.
	
	\begin{lem}\label{Lemma M martingale and M(tau)=A-S}
		The process $(M_t)_{t\geq 0}$ is a martingale.
		Moreover, identifying $\chi_{\emptyset}$ with $X=\exp(\xi_\tau)$, we have that $M_{\tau_t}=A(t)-S_{\tau_t}$ for all $t\geq 0$.
	\end{lem}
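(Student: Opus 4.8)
The plan is to verify the two assertions separately, starting with the martingale property. The key input is the Markov--branching identity of Lemma \ref{Lemma Markov-branching property of A}, rewritten along the Lamperti time change: identifying $\chi_\emptyset$ with $X=\exp(\xi_\tau)$, the negative jumps of $\chi_\emptyset$ at growth-fragmentation time correspond to the negative jumps of $e^{\xi}$ at Lamperti time, and the factor $(t-s)$ in Lemma \ref{Lemma Markov-branching property of A} becomes $(\sigma_t-\sigma_s)$ since $\sigma$ is the inverse of $\tau$. So the first sum defining $M_t$ is exactly (the Lamperti-time version of) $A$ evaluated along the trajectory of $\xi$, which proves the identity $M_{\tau_t}=A(t)-S_{\tau_t}$ once one checks that $S_t$ evaluated at $t=\tau_r$ reduces to $S_{\tau_r}$ with the stated integrand --- this is immediate by the change of variables $s\mapsto\tau_s$ in the integral defining $S$. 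I would present this identity first, as it is essentially bookkeeping.

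For the martingale property, I would work with the natural filtration $(\mathcal{F}_t)$ of $\xi$ enlarged to carry the i.i.d.\ family $(A_s)$ of area copies; the point is that each $A_s$ is attached to a jump of $\xi$ at time $s$ and is independent of $\mathcal{F}_s$ and of all other $A_{s'}$. The first sum in $M_t$ is then an integral against the jump measure of $\xi$: writing $N(\mathrm{d}s,\mathrm{d}y,\mathrm{d}a)$ for the Poisson random measure recording, for each negative jump of $\xi$ at time $s$ of size $y<0$, an independent area variable $a$ distributed as $A$ under $\mathcal{P}_1$, the sum equals $\int_{[0,t]\times(-\infty,0)\times\mathbb{R}_+} (e^{\xi(s-)})^{\omega_-}(1-e^{y})^{\omega_-}\,a\big((\sigma_t-\sigma_s)(e^{\xi(s-)})^\alpha(1-e^y)^\alpha\big)\,N(\mathrm{d}s,\mathrm{d}y,\mathrm{d}a)$ --- note $\Delta_-e^{\xi(s)}=e^{\xi(s-)}(1-e^{y})$. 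The compensator of $N$ is $\mathrm{d}s\,\Lambda(\mathrm{d}y)\mathds{1}_{y<0}\,\mathcal{P}_1(A\in\mathrm{d}a)$, and integrating the same integrand against the compensator gives precisely $S_t$, using Lemma \ref{Lemma expect A(epsilon) = distr function of I} to turn $\mathcal{E}_1\big(A(u)\big)$ into $\mathbb{P}_1^-(I\le u)$ with $u=(1-e^y)^\alpha e^{\alpha\xi(s)}(\sigma_t-\sigma_s)$. Here one must be slightly careful that the argument $(\sigma_t-\sigma_s)$ depends on the whole path of $\xi$ up to time $t$, not just up to $s$; this is the reason $S_t$ is written with a conditional probability $\mathbb{P}_1^-(\cdot\mid\mathcal{F}_s)$ and why one should really compute $\mathcal{E}(M_t-M_r\mid\mathcal{F}_r)$ by first conditioning on the full path of $\xi$ and then applying the compensation formula to the Poisson measure of the area-marks alone, the latter being independent of $\xi$. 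Once $M$ is exhibited as a compensated integral against a Poisson random measure, the martingale property follows from the standard compensation formula, provided integrability holds.

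The main obstacle is therefore \emph{integrability}: one must check that $\mathcal{E}_1\big(\sum_{s\le t}(\Delta_-e^{\xi(s)})^{\omega_-}A_s((\sigma_t-\sigma_s)(\Delta_-e^{\xi(s)})^\alpha)\big)<\infty$, so that the compensation formula applies and $M_t$ is genuinely integrable (and not merely a formal difference of two infinite quantities). By the compensation formula this expectation equals $\mathcal{E}_1(S_t)$, so it suffices to bound $\mathcal{E}_1(S_t)$; bounding $\mathbb{P}_1^-(I\le u)\le 1$ and $\sigma_t-\sigma_s\le \sigma_t$ reduces this to controlling $\mathcal{E}_1\big(\int_0^t e^{\omega_-\xi(s)}\,\mathrm{d}s\big)\int_{(-\infty,0)}(1-e^y)^{\omega_-}\Lambda(\mathrm{d}y)$; the $y$-integral is finite because $(1-e^y)^{\omega_-}\le |y|^{\omega_-}\wedge 1$ is $\Lambda$-integrable (indeed it appears in the definition of $\kappa(\omega_-)$, which is finite by \eqref{Equation Cramer hypothesis}), and $\mathcal{E}_1(e^{\omega_-\xi(s)})=e^{s\psi(\omega_-)}$ is finite since $\psi(\omega_-)<\infty$ under our standing assumptions. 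A cleaner route, if one wants a uniform-in-$t$ statement as will be needed later, is to bound instead by the quantity $\mathcal{E}_1(A(t))=\mathbb{P}_1^-(I\le t)$ directly via Lemma \ref{Lemma expect A(epsilon) = distr function of I} together with the branching identity; either way the integrability is the only non-formal point, and I would spell it out carefully before invoking the compensation formula to conclude that $(M_t)_{t\ge0}$ is a martingale.
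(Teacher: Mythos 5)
Your proposal is correct and follows essentially the same route as the paper: the identity $M_{\tau_t}=A(t)-S_{\tau_t}$ is obtained by the Lamperti change of variables together with Lemma \ref{Lemma Markov-branching property of A}, and the martingale property by recognising $S$ as the compensator of the marked point process of negative jumps, using Lemma \ref{Lemma expect A(epsilon) = distr function of I} and the independence of the $A_s$'s from $\xi$. You are more explicit than the paper on the integrability needed for the compensation formula (the paper takes this for granted), and your bound via $\kappa(\omega_-)<\infty$ and $\mathcal{E}_1(e^{\omega_-\xi(s)})=e^{s\psi(\omega_-)}$ is a valid way to supply it.
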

	\begin{proof}
		Thanks to Lemma \ref{Lemma expect A(epsilon) = distr function of I} and independence of the $A_s$'s with $\xi$, one sees that $(S_t)_{t\geq 0}$ is the predictable compensator of the series in $(M_t)_{t\geq 0}$, so that the latter is a martingale.
		We then write
		\begin{align*}
			M_{\tau_{t}}
			&=\sum_{s\leq \tau_t}(\Delta_- e^{\xi(s)})^{\omega_-}A_s\left((t-\sigma_s)(\Delta_-e^{\xi(s)})^\alpha\right)-S_{\tau_t}\\
			&=\sum_{s\leq t}(\Delta_- e^{\xi(\tau_s)})^{\omega_-}A_{\tau_s}\left((t-s)(\Delta_-e^{\xi(\tau_s)})^\alpha\right)-S_{\tau_t}\\
			&=A(t)-S_{\tau_t},
		\end{align*}
		by Lemma \ref{Lemma Markov-branching property of A} (where we reindexed the $A_s$'s), as claimed. 
	\end{proof}
	
	The main idea of the current section is to use Lemma \ref{Lemma M martingale and M(tau)=A-S}, namely the martingale property of $M$ and its relation to $A$, to show that $A(\epsilon)\sim S_{\tau_\epsilon}$.
	In this direction, we first establish the following:
	
	\begin{lem}\label{Lemma predictable compensator as E(A(epsilon))}
		$\mathcal{P}_1$-almost surely, it holds that
		\begin{align*}
			S_{\tau_{\epsilon}}&\underset{\epsilon\to 0}{\sim}\frac{|\alpha|\rho}{(\omega_--\rho)(\omega_-+|\alpha|-\rho)}\mathbb{E}_1^-(I^{-\frac{\omega_--\rho}{|\alpha|}})\overline{\Lambda}\left(\epsilon^{1/|\alpha|}\right)\epsilon^{1+\frac{\omega_-}{|\alpha|}}.
		\end{align*}
	\end{lem}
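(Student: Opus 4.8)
The idea is to rewrite $S_{\tau_\epsilon}$ through Lamperti's time change as the integral of a single deterministic function, and then to exploit that this function is regularly varying at $0$. Set $\widetilde\xi(u):=\log X(u)=\xi(\tau_u)$ and
\[
F_1(s):=\int_{\intervalleoo{-\infty}{0}}(1-e^y)^{\omega_-}\mathbb{P}_1^-\!\left(I\le(1-e^y)^\alpha s\right)\Lambda(\mathrm{d}y),\qquad s\ge 0,
\]
which is nondecreasing and finite since $\int_{\intervalleoo{-\infty}{0}}(1-e^y)^{\omega_-}\Lambda(\mathrm{d}y)<\infty$ (this integral enters $\kappa(\omega_-)$, which is finite). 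The first step is to substitute $s=\tau_u$ in the definition of $S_{\tau_\epsilon}$: since $\mathrm{d}s=X(u)^\alpha\,\mathrm{d}u$ and $\sigma_{\tau_\epsilon}-\sigma_{\tau_u}=\epsilon-u$, this gives
\[
S_{\tau_\epsilon}=\int_0^\epsilon e^{(\omega_-+\alpha)\widetilde\xi(u)}\,F_1\!\big(e^{\alpha\widetilde\xi(u)}(\epsilon-u)\big)\,\mathrm{d}u .
\]

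Next, under $\mathcal{P}_1$ the path $\widetilde\xi$ is càdlàg with $\widetilde\xi(0)=0$, and $\tau_\epsilon\to 0$ as $\epsilon\to 0$, so $m_\epsilon:=\inf_{[0,\epsilon]}\widetilde\xi$ and $M_\epsilon:=\sup_{[0,\epsilon]}\widetilde\xi$ both tend to $0$, $\mathcal{P}_1$-a.s. Using $\alpha<0$, $\omega_-+\alpha>0$ (guaranteed by \eqref{Equation assumption rho}), the monotonicity of $F_1$, and then the change of variable $r=\epsilon-u$ followed by a rescaling of $r$, one sandwiches
\[
c_\epsilon^-\,H\!\big(e^{\alpha M_\epsilon}\epsilon\big)\ \le\ S_{\tau_\epsilon}\ \le\ c_\epsilon^+\,H\!\big(e^{\alpha m_\epsilon}\epsilon\big),\qquad H(t):=\int_0^t F_1(s)\,\mathrm{d}s,
\]
with explicit prefactors $c_\epsilon^\pm\to 1$. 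Since $e^{\alpha m_\epsilon},e^{\alpha M_\epsilon}\to 1$, once $H$ is known to be regularly varying at $0$ with a positive index, the uniform convergence theorem for regularly varying functions (Theorem 1.2.1 in \cite{BGT87}) yields $H(e^{\alpha M_\epsilon}\epsilon)\sim H(\epsilon)\sim H(e^{\alpha m_\epsilon}\epsilon)$, and hence $S_{\tau_\epsilon}\sim H(\epsilon)$, $\mathcal{P}_1$-a.s.

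It remains to compute the asymptotics of $H(\epsilon)$. By Tonelli and the identity $\int_0^\epsilon\mathbb{P}_1^-(I\le\beta s)\,\mathrm{d}s=\mathbb{E}_1^-[(\epsilon-I\beta^{-1})^+]$,
\[
H(\epsilon)=\mathbb{E}_1^-\!\left[\int_{\intervalleoo{-\infty}{0}}(1-e^y)^{\omega_-}\big(\epsilon-I(1-e^y)^{|\alpha|}\big)^+\Lambda(\mathrm{d}y)\right].
\]
For a fixed value of $I$, the substitution $w=1-e^y$ — whose image measure $\mu$ satisfies $\mu(\intervalleoo{w}{1})\sim\overline\Lambda(w)$ as $w\to 0$ — turns the inner integral into $\epsilon\big[\int_0^x w^{\omega_-}\mu(\mathrm{d}w)-x^{-|\alpha|}\int_0^x w^{\omega_-+|\alpha|}\mu(\mathrm{d}w)\big]$ with $x=(\epsilon/I)^{1/|\alpha|}$ (eventually $<1$), and Karamata's theorem (Theorem 1.6.5 in \cite{BGT87}, applicable since $\rho<\omega_-<\omega_-+|\alpha|$ by \eqref{Equation assumption rho}) together with the regular variation of $\overline\Lambda$ shows it is asymptotic to $\tfrac{|\alpha|\rho}{(\omega_--\rho)(\omega_-+|\alpha|-\rho)}\,\epsilon^{1+\frac{\omega_-}{|\alpha|}}\,\overline\Lambda(\epsilon^{1/|\alpha|})\,I^{-\frac{\omega_--\rho}{|\alpha|}}$. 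A dominated convergence argument then lets one pull the limit through $\mathbb{E}_1^-$, producing the equivalent claimed in the statement, and in particular the regular variation of $H$ at $0$ with index $1+\frac{\omega_--\rho}{|\alpha|}>0$ used above. (As a sanity check, this equivalent coincides with that of $\mathcal{E}_1(A(\epsilon))$ in Lemma \ref{Lemma asymp expect A(espilon)}, consistently with $\mathcal{E}_1(S_{\tau_\epsilon})=\mathcal{E}_1(A(\epsilon))$.)

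The step I expect to be the main obstacle is this last dominated convergence: one must bound the inner integral, uniformly in small $\epsilon$, by an $\mathbb{E}_1^-$-integrable function of $I$. This needs Potter's bounds to absorb the slowly varying part of $\overline\Lambda$, a separate treatment of the region $\{I\le\epsilon\}$ (where the effective cutoff in the $w$-integral is $1$ rather than $(\epsilon/I)^{1/|\alpha|}$), and the finiteness of a negative moment of $I$ in a slightly inflated exponent range — which is precisely what the strict inequalities in \eqref{Equation assumption rho} provide and what is already recorded in the proof of Lemma \ref{Lemma asymp expect A(espilon)}.
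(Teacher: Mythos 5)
Your proposal is correct and follows essentially the same route as the paper: rewrite $S_{\tau_\epsilon}$ via Lamperti's time change as (a perturbation of) the deterministic integral $\int_0^\epsilon F_1(s)\,\mathrm{d}s$, then extract its asymptotics from Karamata's theorem (Theorem 1.6.5 in \cite{BGT87}) applied at the scale $(\epsilon/I)^{1/|\alpha|}$ together with a dominated convergence over the law of $I$, whose admissibility rests on the strict inequalities in \eqref{Equation assumption rho}. Your sandwich argument with $m_\epsilon,M_\epsilon$ and the uniform convergence theorem is in fact a slightly more careful justification of the step where the paper simply replaces $X(s)$ by $1$, and your conditioning on $I$ first is an equivalent reorganization of the paper's splitting into $C_1(\epsilon)+C_2(\epsilon)$ and $\{I\lessgtr\epsilon^\delta\}$.
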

	
	\begin{proof}
		Recall \eqref{Equation Lamperti tau_t}. After a change of variables and replacing $\exp(\xi(\tau_s))$ by $X(s)$, we have that
		\begin{align*}
			S_{\tau_\epsilon}
			&=\int_0^\epsilon\mathrm{d}sX(s)^{\omega_-+\alpha}\int_{\intervalleoo{-\infty}{0}}\Lambda(\mathrm{d}y)(1-e^y)^{\omega_-}\mathbb{P}_1^-\left(I\leq (1-e^y)^{\alpha}X(s)^{\alpha}(\epsilon-s)|\mathcal{F}_{\tau_s}\right)
		\end{align*}
		It is well known that the law of $I$ is absolutely continuous, see e.g. \cite{BLM08} Theorem 3.9.
		Hence, since $X(s)\sim 1$ as $s\to 0+$, one sees that
		\begin{align*}
			S_{\tau_\epsilon}
			&\underset{\epsilon\to 0+}{\sim}\int_0^\epsilon\mathrm{d}s\int_{\intervalleoo{-\infty}{0}}\Lambda(\mathrm{d}y)(1-e^y)^{\omega_-}\mathbb{P}_1^-\left(I\leq(\epsilon-s)(1-e^y)^\alpha\right),\quad \mathcal{P}_1\text{-almost surely.}
		\end{align*}
		We express the right-hand side in the form
\begin{align*}
	&\int_{\intervalleoo{-\infty}{0}}\Lambda(\mathrm{d}y)(1-e^y)^{\omega_-}\int_0^\epsilon\mathrm{d}s\mathbb{P}_1^-(I\leq s(1-e^y)^{\alpha})\\
	&\hspace{2cm}=\int_{\intervalleoo{-\infty}{0}}\Lambda(\mathrm{d}y)(1-e^y)^{\omega_-+|\alpha|}\int_0^{\epsilon(1-e^y)^\alpha}\mathrm{d}s\mathbb{P}_1^-(I\leq s)\\
	&\hspace{2cm}=C_1(\epsilon)+C_2(\epsilon),
\end{align*}
where $C_1(\epsilon)$ is the part of the first integral with domain restricted to $E_1:=\intervalleoo{-\infty}{\log\left(1-\epsilon^{\frac{1-\delta}{|\alpha|}}\right)}$ for $\delta>0$ arbitrary small, and $C_2(\epsilon)$ is that restricted to $E_2:=\intervallefo{\log\left(1-\epsilon^{\frac{1-\delta}{|\alpha|}}\right)}{0}$.
We first address $C_1(\epsilon)$.
Note that for $y\in E_1$, we have that $s\leq\epsilon (1-e^y)^\alpha\leq \epsilon^\delta$.
Appealing to lemmas \ref{Lemma expect A(epsilon) = distr function of I} and \ref{Lemma asymp expect A(espilon)}, there exists a constant $C$ such that
\begin{align*}
	\mathbb{P}_1^-(I\leq s)\leq Cs^{1+\frac{\omega_-}{|\alpha|}}\overline{\Lambda}(s^{1/|\alpha|}),\quad\forall s\leq\epsilon^\delta.
\end{align*}
In the following, we shall keep writing $C$ for any positive and finite constant that does not depend on $\epsilon$ and that may change from line to line.
We have
\begin{align*}
	C_1(\epsilon)
	&\leq C\int_{E_1}\Lambda(\mathrm{d}y)(1-e^y)^{\omega_-+|\alpha|}\int_0^{\epsilon^{\delta}} s^{1+\frac{\omega_-}{|\alpha|}}\overline{\Lambda}(s^{1/|\alpha|})\mathrm{d}s\\
	&\leq C\int_{E_1}\Lambda(\mathrm{d}y)(1-e^y)^{\omega_-+|\alpha|}\times\epsilon^{\delta(2+\frac{\omega_-}{|\alpha|})}\overline{\Lambda}(\epsilon^{\delta/|\alpha|}),
\end{align*}
where we used Theorem 1.5.11(ii) of \cite{BGT87}.
Thanks to Theorem 1.6.4 of the same book, we conclude that
\begin{align*}
	C_1(\epsilon)&=O\left(\epsilon^{(1-\delta)(1+\frac{\omega_-}{|\alpha|})}\overline{\Lambda}(\epsilon^{(1-\delta)/|\alpha|})\times\epsilon^{\delta(2+\frac{\omega_-}{|\alpha|})}\overline{\Lambda}(\epsilon^{\delta/|\alpha|})\right)\\
	&=O\left(\epsilon^{\delta}\times\epsilon^{1+\frac{\omega_-}{|\alpha|}}\overline{\Lambda}(\epsilon^{(1-\delta)/|\alpha|})\overline{\Lambda}(\epsilon^{\delta/|\alpha|})\right)\\
	&=o\left(\epsilon^{1+\frac{\omega_-}{|\alpha|}}\overline{\Lambda}(\epsilon^{1/|\alpha|})\right),
\end{align*}
where the last line follows straightforwardly from the existence of a slowly varying function $\ell:\intervalleoo{0}{\infty}\to\intervalleoo{0}{\infty}$ at the origin such that $\overline{\Lambda}(x)=x^{-\rho}\ell(x)$, see \cite{BGT87} Theorem 1.4.1(iii).

We now investigate $C_2(\epsilon)$.
Let $k$ denote the density of $I$ under $\mathbb{P}_1^-$.
After applying Tonelli's Theorem, we have that
\begin{align}\label{Equation C_2}
	C_2(\epsilon)
	&=\int_0^{\epsilon^{\delta}}k(x)\mathrm{d}x\int_{E_2}\Lambda(\mathrm{d}y)(1-e^y)^{\omega_-}(\epsilon-x(1-e^y)^{|\alpha|})+\int_{\epsilon^{\delta}}^{\infty} k(x)xf(\epsilon/x)\mathrm{d}x,
\end{align}
where
\begin{align*}
	f(u):=\int_{\intervalleoo{\log\left(1-u^{\frac{1}{|\alpha|}}\right)}{0}}\Lambda(\mathrm{d}y)(1-e^y)^{\omega_-}(u-(1-e^y)^{|\alpha|}).
\end{align*}
Theorem 1.6.5 of \cite{BGT87} allows us to bound the first term in the right-hand side of \eqref{Equation C_2} by
\begin{align*}
	\mathbb{P}_1^-\left(I\leq\epsilon^{\delta}\right)\times\epsilon\int_{E_2}\Lambda(\mathrm{d}y)(1-e^y)^{\omega_-}
	&\underset{\epsilon\to 0+}{\sim}C\mathbb{P}_1^-\left(I\leq\epsilon^\delta\right)\epsilon^{-\delta\frac{\omega_--\rho}{|\alpha|}}\epsilon^{1+\frac{\omega_-}{|\alpha|}}\overline{\Lambda}(\epsilon^{(1-\delta)/|\alpha|})\\
	&\ \ =o\left(\epsilon^{1+\frac{\omega_-}{|\alpha|}}\overline{\Lambda}(\epsilon^{1/|\alpha|})\right),
\end{align*}
with the same argument as for $C_1(\epsilon)$.

We turn our attention to the second term in \eqref{Equation C_2}.
First, with the help of Theorem 1.6.5 of \cite{BGT87}, it can be shown that
\begin{align*}
	f(u)\underset{u\to 0+}{\sim}&\left(\frac{\rho}{\omega_--\rho}-\frac{\rho}{\omega_-+|\alpha|-\rho}\right)u^{1+\frac{\omega_-}{|\alpha|}}\overline{\Lambda}(u^{1/|\alpha|})\\
	=\ &\frac{|\alpha|\rho}{(\omega_--\rho)(\omega_-+|\alpha|-\rho)}u^{1+\frac{\omega_-}{|\alpha|}}\overline{\Lambda}(u^{1/|\alpha|})
\end{align*}
Note that $\epsilon/x\leq \epsilon^{1-\delta}$ for all $x>\epsilon^\delta$.
The estimate above yields
\begin{align*}
	\int_{\epsilon^\delta}^{\infty} k(x)xf(\epsilon/x)\mathrm{d}x
	\underset{\epsilon\to 0}{\sim}\frac{|\alpha|\rho}{(\omega_--\rho)(\omega_-+|\alpha|-\rho)}\epsilon^{1+\frac{\omega_-}{|\alpha|}}\int_{\epsilon^\delta}^{\infty}k(x)x^{\frac{\omega_-}{\alpha}}\overline{\Lambda}((\epsilon/x)^{1/|\alpha|})\mathrm{d}x.
\end{align*}
By dominated convergence, we then see that
\begin{align*}
	\left(\epsilon^{1+\frac{\omega_-}{|\alpha|}}\overline{\Lambda}(\epsilon^{1/|\alpha|})\right)^{-1}\int_{\epsilon^\delta}^{\infty} k(x)xf(\epsilon/x)\mathrm{d}x
	\underset{\epsilon\to 0}{\longrightarrow}\frac{|\alpha|\rho}{(\omega_--\rho)(\omega_-+|\alpha|-\rho)}\mathbb{E}_1^-\left(I^{\frac{\omega_--\rho}{\alpha}}\right),
\end{align*}
as claimed.
\end{proof}
	
	We are ready to provide the proof of Theorem \ref{Theorem area GF}. 
	
	\begin{proof}[Proof of Theorem \ref{Theorem area GF}]
	Provided that $\epsilon\mapsto M_\epsilon$ has regular variation at 0, one has that $M_{\tau_\epsilon}\sim M_\epsilon$ as $\epsilon\to 0+$, as a consequence of $\tau_\epsilon\sim\epsilon$.
	Thus, thanks to Lemmas \ref{Lemma M martingale and M(tau)=A-S} and \ref{Lemma predictable compensator as E(A(epsilon))}, it suffices to show that 
	\begin{align}\label{Equation to show}
		\lim_{\epsilon\to 0}\left(\epsilon^{1+\frac{\omega_-}{|\alpha|}}\overline{\Lambda}(\epsilon^{1/|\alpha|})\right)^{-1}M_\epsilon=0,\quad\mathcal{P}_1\text{-a.s.}
	\end{align}
	Define the stopping time 
	\begin{align*}
		T:=\inf\left\{s>0:|\xi(s)|>1\right\}.
	\end{align*}
	We proceed as follows: we know that $\mathcal{M}$ has finite moment of order $p$ as soon as $p<\omega_+/\omega_-$ by \cite{BBCK16} Lemma 2.3. Let $p=p(\epsilon)=1+1/\sqrt{|\log(\epsilon)|}$ as in Lemma \ref{Lemma speed A^p} and let $a>0$ be arbitrary small, we have by Markov's Inequality that 
	\begin{align*}
		\mathcal{P}_1\left(\sup_{s\leq\epsilon\wedge T}\frac{|M_s|}{\epsilon^{1+\frac{\omega_-}{|\alpha|}}\overline{\Lambda}(\epsilon^{1/|\alpha|})}\geq a\right)
		&\leq \left(a\epsilon^{1+\frac{\omega_-}{|\alpha|}}\overline{\Lambda}(\epsilon^{1/|\alpha|})\right)^{-p}\mathcal{E}_1\left(\sup_{s\leq\epsilon\wedge T}|M_s|^{p}\right)\\
		&\leq (6p)^p\left(a\epsilon^{1+\frac{\omega_-}{|\alpha|}}\overline{\Lambda}(\epsilon^{1/|\alpha|})\right)^{-p}\mathcal{E}_1\left([M]_{\epsilon\wedge T}^{p/2}\right),
	\end{align*}
	thanks to Burkholder-Davis-Gundy Inequality, where $[M]$ denotes the quadratic variation of $M$ (see Theorem 92 p.304 in \cite{DM2} for the constant $(6p)^p$).
	In particular, since $M$ is a purely discontinuous martingale, appealing to \cite{L76} Section 3(c) we get that the previous quantity is bounded from above by
	\begin{align*}
		(6p)^p\left(a\epsilon^{1+\frac{\omega_-}{|\alpha|}}\overline{\Lambda}(\epsilon^{1/|\alpha|})\right)^{-p}\mathcal{E}_1\left(\sum_{s\leq \epsilon\wedge T}(\Delta e^{\xi(s)})^{p\omega_-}A_s\left((\epsilon\wedge T-s)(\Delta e^{\xi(s)})^{\alpha}\right)^p\right),
	\end{align*}
	Since $T>0$ almost surely by right-continuity of $\xi$, in order to conclude, it is sufficient to show that the expectation above is $o\left(\epsilon^{p(1+\frac{\omega_-}{|\alpha|})}\overline{\Lambda}(\epsilon^{1/|\alpha|})^{p}\right)$, as $\epsilon\to 0$.
	We write
	\begin{align*}
		&\mathcal{E}_1\left(\sum_{s\leq \epsilon\wedge T}(\Delta e^{\xi(s)})^{p\omega_-}A_s\left((\epsilon\wedge T-s)(\Delta e^{\xi(s)})^{\alpha}\right)^p\right)\\
		&\hspace{2cm}=\mathcal{E}_1\left(\sum_{s\leq \epsilon\wedge T}e^{p\omega_-\xi(s-)}(1-e^{\Delta\xi(s)})^{p\omega_-}A_s\left((\epsilon\wedge T-s)e^{\alpha\xi(s-)}(1-e^{\Delta\xi(s)})^{\alpha}\right)^p\right)\\
		&\hspace{2cm}\leq e^{p\omega_-}\mathcal{E}_1\left(\sum_{s\leq \epsilon}(1-e^{\Delta\xi(s)})^{p\omega_-}A_s\left(e^{|\alpha|}(\epsilon-s)(1-e^{\Delta\xi(s)})^{\alpha}\right)^p\right).
	\end{align*}

	(Recall that the $A_s$'s are non-decreasing and non-negative functions.)
	Applying the compensation formula, the upper bound above becomes, after an implicit change of variables,
\begin{align}\label{Equation computation}
	&e^{p\omega_-}e^{|\alpha|}\int_0^{\epsilon}\mathrm{d}s\int_{\intervalleoo{-\infty}{0}}\Lambda(\mathrm{d}y)(1-e^y)^{p\omega_-}\mathcal{E}_1\left(A\left(e^{|\alpha|}s(1-e^y)^\alpha\right)^p\right)\nonumber\\
	&\hspace{1cm}=e^{p\omega_-+|\alpha|}\int_{\intervalleoo{-\infty}{0}}\Lambda(\mathrm{d}y)(1-e^y)^{p\omega_-}\int_0^{\epsilon}\mathrm{d}s\mathcal{E}_1\left(A\left(e^{|\alpha|}s(1-e^y)^\alpha\right)^p\right)\nonumber\\
	&\hspace{1cm}=e^{p\omega_-+|\alpha|}\int_{\intervalleoo{-\infty}{0}}\Lambda(\mathrm{d}y)(1-e^y)^{p\omega_-+|\alpha|}\int_0^{\epsilon(1-e^y)^\alpha}\mathrm{d}s\mathcal{E}_1\left(A\left(e^{|\alpha|}s\right)^p\right)\nonumber\\
	&\hspace{1cm}=e^{p\omega_-+|\alpha|}\int_0^{\epsilon}\mathrm{d}s\mathcal{E}_1\left(A\left(e^{|\alpha|}s\right)^p\right)\int_{\intervalleoo{-\infty}{0}}\Lambda(\mathrm{d}y)(1-e^y)^{p\omega_-+|\alpha|}\nonumber\\
	&\hspace{2cm}+e^{p\omega_-+|\alpha|}\int_{\epsilon}^\infty\mathrm{d}s\mathcal{E}_1\left(A\left(e^{|\alpha|}s\right)^p\right)\int_{\intervalleoo{\log(1-(\epsilon/s)^{\frac{1}{|\alpha|}})}{0}}\Lambda(\mathrm{d}y)(1-e^y)^{p\omega_-+|\alpha|}\nonumber\\
	&\hspace{1cm}\leq e^{p\omega_-+|\alpha|}\int_0^{K\epsilon}\mathrm{d}s\mathcal{E}_1\left(A\left(e^{|\alpha|}s\right)^p\right)\int_{\intervalleoo{-\infty}{0}}\Lambda(\mathrm{d}y)(1-e^y)^{p\omega_-+|\alpha|}\nonumber\\
	&\hspace{2cm}+e^{p\omega_-+|\alpha|}\int_{K\epsilon}^\infty\mathrm{d}s\mathcal{E}_1\left(A\left(e^{|\alpha|}s\right)^p\right)\int_{\intervalleoo{\log(1-(\epsilon/s)^{\frac{1}{|\alpha|}})}{0}}\Lambda(\mathrm{d}y)(1-e^y)^{p\omega_-+|\alpha|},
\end{align}
	where $K$ is a constant intended to be large.
	The first term is negligeable since by Lemma \ref{Lemma speed A^p}, for $\delta>0$ small enough, we have that
	\begin{align*}
		\int_0^{K\epsilon}\mathcal{E}_1\left(A\left(e^{|\alpha|}s\right)^p\right)\mathrm{d}s
		=o(\epsilon^{2+\frac{\omega_--\rho}{|\alpha|}-\delta})
		=o(\epsilon^{p(1+\frac{\omega_--\rho}{|\alpha|})}).
	\end{align*}
	(Recall that $p=1+1/\sqrt{|\log(\epsilon)|}$.)
	To bound the second term, we first note that for all $s>K\epsilon$, we have $\epsilon/s\leq 1/K$, so that Theorem 1.6.5 of \cite{BGT87} entails the existence of a constant $C$ such that
	\begin{align*}
		\int_{\intervalleoo{\log(1-(\epsilon/s)^{\frac{1}{|\alpha|}})}{0}}\Lambda(\mathrm{d}y)(1-e^y)^{p\omega_-+|\alpha|}
		&\leq C\left(\frac{\epsilon}{s}\right)^{\frac{p\omega_-+|\alpha|}{|\alpha|}}\overline{\Lambda}\left((\epsilon/s)^{1/|\alpha|}\right)\\
		&\leq C \epsilon^{1+\frac{p\omega_--\rho}{|\alpha|}}\left(\frac{1}{s}\right)^{1+\frac{p\omega_--\rho}{|\alpha|}}\ell\left((\epsilon/s)^{1/|\alpha|}\right),
	\end{align*}
	where $\ell:\intervalleoo{0}{\infty}\to\intervalleoo{0}{\infty}$ is some slowly varying function at the origin.
	Finally, we get that the second term in the right-hand side of \eqref{Equation computation} is bounded from above by
	\begin{align*}
		\epsilon^{1+\frac{p\omega_--\rho}{|\alpha|}}\int_{0}^\infty\mathrm{d}s\mathcal{E}(A(e^{|\alpha|}s)^p)s^{-1-(p\omega_--\rho)/|\alpha|}\ell\left((\epsilon/s)^{1/|\alpha|}\right).
	\end{align*}
	Since the integral on the right-hand side is finite by Lemma \ref{Lemma speed A^p}, Lemma \ref{Lemma equiv p(epsilon)} is enough to show that \eqref{Equation to show} holds.
	The convergence in $\mathbb{L}^1$ is just a consequence of Lemma \ref{Lemma asymp expect A(espilon)} and Scheff\'e's Lemma and the proof is complete.
	\end{proof}
	
	\section{Area near the root starting from 0}\label{Section area near the root starting from 0}
	
	We start this section by proving Proposition \ref{Theorem stationary area from 0}.
	
	\begin{proof}[Proof of Proposition \ref{Theorem stationary area from 0}]
	Let $Y^+$ have law $\mathbb{P}_0^+$.
	We identify $\chi_\emptyset$ with $Y^+$ and similarly to Lemma \ref{Lemma Markov-branching property of A}, under $\mathcal{P}_0^+$, we can write
	\begin{align}\label{Equation A as a sum under P_0^+}
		\left\{t^{\frac{\omega_-}{\alpha}}A(t);t\geq 0\right\}
		=\left\{t^{\frac{\omega_-}{\alpha}}\sum_{s\leq t}|\Delta_-Y^+(s)|^{\omega_-}A_s\left((t-s)|\Delta_-Y^+(s)|^\alpha\right);t\geq 0\right\},
	\end{align}
	where $(A_s)_{s\in\mathbb{R}_+}$ is a family of i.i.d. copies of $A$ under $\mathcal{P}_1$, mutually independent from $Y^+$.
	The fact that for all $t\geq 0$, $A(t)<\infty$ $\mathcal{P}_0^+$-a.s. follows from $\sum_{s\leq t}|\Delta_-Y^+(s)|^{\omega_-}<\infty$ $\mathbb{P}_0^+$-a.s. by Lemma 4.3 in \cite{BBCK16} and that each $A_s(t)$ is bounded for all $t\geq 0$ by its total mass $\mathcal{M}_s$ having expectation 1.
	
	Recall the self-similarity property $(Y^+(t))_{t\geq 0}\stackrel{d}{=}(xY^+(tx^\alpha))_{t\geq 0}$ for any $x>0$.
	Combining this fact and the above equation yields, fixing $u\in\mathbb{R}$, that
	\begin{align*}
		&\left\{e^{\frac{\omega_-}{\alpha}(u+t)}A(e^{u+t});t\geq 0\right\}\\
		&\hspace{1.5cm}=\left\{e^{\frac{\omega_-}{\alpha}(u+t)}\sum_{s\leq e^{u+t}}|\Delta_-Y^+(s)|^{\omega_-}A_{s}\left((e^{u+t}-s)|\Delta_-Y^+(se^{\omega_-u})|^\alpha\right);t\geq 0\right\}\\
		&\hspace{1.5cm}=\left\{e^{\frac{\omega_-}{\alpha}(u+t)}\sum_{s\leq e^{t}}|\Delta_-Y^+(se^u)|^{\omega_-}A_{se^u}\left((e^{u+t}-se^u)|\Delta_-Y^+(se^{u})|^\alpha\right);t\geq 0\right\}\\
		&\hspace{1.5cm}\stackrel{d}{=}\left\{e^{\frac{\omega_-}{\alpha}t}\sum_{s\leq e^t}|\Delta_-Y^+(s)|^{\omega_-}A_{se^u}\left((e^t-s)|\Delta_-Y^+(s)|^\alpha\right);t\geq 0\right\}\\
		&\hspace{1.5cm}=\left\{e^{\frac{\omega_-}{\alpha}t}A(e^t);t\geq 0\right\},
	\end{align*}
	which shows the claim.
	\end{proof}
	
	Our goal in the rest of this section is to prove Propositions \ref{Theorem upper envelope A(t)} and \ref{Proposition lower envelope A(t)}.
	
	\subsection{The upper bound}
	
	In this subsection, we do not assume regular variation of $\overline{\Lambda}$ as in \eqref{Equation assumption rho}, but only that $\kappa(\omega_++\omega_-+\alpha)<\infty$.
	We aim at proving Proposition \ref{Theorem upper envelope A(t)}.
	Thanks to Proposition \ref{Theorem stationary area from 0}, we can restrict ourselves without loss of generality to the case where $t\to 0+$.
	We shall need the finiteness of $\mathcal{E}_0^+(t^{\omega_-/\alpha}A(t))$ for all $t>0$, in order to apply Markov's Inequality on some well chosen sequence of events.
	Thanks to Proposition \ref{Theorem stationary area from 0}, it suffices to look at $\mathcal{E}_0^+(A(1))$.
	
	\begin{lem}\label{Lemma E(A(1)) spectrally negative case}
		Suppose that $\kappa(\omega_++\omega_-+\alpha)<\infty$.
		It holds that $\mathcal{E}_0^+(A(1))<\infty$.
	\end{lem}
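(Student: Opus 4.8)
The plan is to unfold the spine one step and reduce the statement to the finiteness of a single moment of the entrance law of the spine from $0$. Identifying $\chi_\emptyset$ with $Y^+$, equation \eqref{Equation A as a sum under P_0^+} reads, under $\mathcal{P}_0^+$,
\[
A(1)=\sum_{s\le1}|\Delta_-Y^+(s)|^{\omega_-}A_s\big((1-s)|\Delta_-Y^+(s)|^{\alpha}\big),
\]
where the $A_s$ are i.i.d.\ copies of $A$ under $\mathcal{P}_1$, independent of $Y^+$. Since $A_s(t)\le\mathcal{M}_s$ for all $t\ge0$ with $\mathcal{E}(\mathcal{M}_s)=1$ (as recalled just after \eqref{Equation A as a sum under P_0^+}), conditioning on $Y^+$ gives $\mathcal{E}_0^+(A(1))\le\mathcal{E}_0^+\big(\sum_{s\le1}|\Delta_-Y^+(s)|^{\omega_-}\big)$. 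Writing $|\Delta_-Y^+(s)|=Y^+(s-)(1-e^{\Delta\eta^+(\tau_s)})$ at negative jumps and recalling that, in the time scale of the growth-fragmentation, the jumps of $Y^+$ have intensity $Y^+(s)^{\alpha}\,\Pi^+(\mathrm dy)\,\mathrm ds$, the compensation formula yields
\[
\mathcal{E}_0^+\Big(\sum_{s\le1}|\Delta_-Y^+(s)|^{\omega_-}\Big)=\Big(\int_{\intervalleoo{-\infty}{0}}(1-e^y)^{\omega_-}\,\Pi^+(\mathrm dy)\Big)\,\mathcal{E}_0^+\Big(\int_0^1 Y^+(s)^{\omega_-+\alpha}\,\mathrm ds\Big).
\]
By \eqref{Equation Pi Levy measure of eta^+}, splitting $\Pi^+$ into its $\Lambda$-part and its $\widetilde\Lambda$-part and using the substitution $y=\log(1-e^x)$ for the latter, the constant above is at most $\int_{\intervalleoo{-\infty}{0}}(1-e^y)^{\omega_-}\Lambda(\mathrm dy)+\int_{\intervalleoo{-\infty}{0}}(1-e^x)^{\omega_+}\Lambda(\mathrm dx)<\infty$, finiteness coming from $\kappa(\omega_-)$ and $\kappa(\omega_+)$ being finite.

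It therefore remains to show $\mathcal{E}_0^+\big(\int_0^1 Y^+(s)^{\omega_-+\alpha}\,\mathrm ds\big)<\infty$. Here I would use the self-similarity of $Y^+$ under $\mathbb{P}_0^+$, i.e.\ $Y^+(s)\stackrel{d}{=}s^{1/|\alpha|}Y^+(1)$, together with Tonelli, to obtain
\[
\mathcal{E}_0^+\Big(\int_0^1 Y^+(s)^{\omega_-+\alpha}\,\mathrm ds\Big)=\mathcal{E}_0^+\big(Y^+(1)^{\omega_-+\alpha}\big)\int_0^1 s^{\frac{\omega_-}{|\alpha|}-1}\,\mathrm ds=\frac{|\alpha|}{\omega_-}\,\mathcal{E}_0^+\big(Y^+(1)^{\omega_-+\alpha}\big),
\]
the integral converging because $\omega_->0$. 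So the whole statement boils down to the finiteness of the (possibly negative) moment $\mathcal{E}_0^+(Y^+(1)^{\omega_-+\alpha})$ of the entrance law of $Y^+$ from $0$, and this is where the assumption $\kappa(\omega_++\omega_-+\alpha)<\infty$ must enter.

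To conclude I would invoke the description of the entrance law at $0$ of a positive self-similar Markov process whose driving L\'evy process $\eta^+$ drifts to $+\infty$ with $\mathbb{E}[\eta^+(1)]=\phi_+'(0)=\kappa'(\omega_+)\in\intervalleoo{0}{\infty}$ (Bertoin--Yor, Caballero--Chaumont): the law of $Y^+(1)$ under $\mathbb{P}_0^+$ is, up to a size-biasing, the image under $y\mapsto y^{-1/|\alpha|}$ of an exponential functional built from $\eta^+$, so that $\mathcal{E}_0^+(Y^+(1)^{\omega_-+\alpha})$ equals a constant times a moment of that functional whose order matches the Laplace exponent $\phi_+(\cdot)=\kappa(\omega_++\cdot)$ being finite at $\omega_-+\alpha$, i.e.\ the hypothesis $\kappa(\omega_++\omega_-+\alpha)<\infty$. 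I expect this to be the main technical point: pinning down the precise entrance-law identity and matching the order of the moment to the stated Cram\'er-type condition via the correct finiteness criterion for moments of exponential functionals of L\'evy processes. If one prefers to avoid entrance laws, an alternative is to bound $\mathcal{E}_0^+(Y^+(1)^{\omega_-+\alpha})\le\liminf_{x\to0+}\mathcal{E}_x^+(Y^+(1)^{\omega_-+\alpha})$ by lower semicontinuity along the finite-dimensional convergence $\mathcal{P}_x^+\to\mathcal{P}_0^+$ (Corollary 4.4 in \cite{BBCK16}), then to use the self-similarity $\mathcal{E}_x^+(Y^+(1)^{\omega_-+\alpha})=x^{\omega_-+\alpha}\mathbb{E}_1^+\big(Y^+(x^{\alpha})^{\omega_-+\alpha}\big)$ and control the lower tail of $Y^+(t)$ under $\mathbb{P}_1^+$ as $t\to\infty$ by an exponential Chebyshev bound governed by $\kappa$; the same condition on $\kappa$ resurfaces.
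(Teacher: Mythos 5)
Your proposal follows essentially the same route as the paper: bound $A(1)$ by $\sum_{s\le1}|\Delta_-Y^+(s)|^{\omega_-}\mathcal{M}_s$, project out the $\mathcal{M}_s$, compute the compensator, use Tonelli and self-similarity to reduce everything to the finiteness of $\mathcal{E}_0^+(Y^+(1)^{\omega_-+\alpha})$, and then invoke the Bertoin--Yor description of the entrance law together with moment criteria for exponential functionals of $\eta^+$, which is exactly where the paper uses \cite{BY01} Theorem 1(iii) and (an adaptation of) \cite{BY02} Proposition 2 under the hypothesis $\kappa(\omega_++\omega_-+\alpha)<\infty$. The only detail left schematic in your write-up, and made explicit in the paper, is the case distinction: when $|\alpha|\le\omega_-$ the exponent $\omega_-+\alpha$ is nonnegative and the moment is finite directly by \cite{BY02} Proposition 1(ii), while the exponential-functional argument is only needed when $\alpha<-\omega_-$.
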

	
	\begin{proof}
		Thanks to \eqref{Equation A as a sum under P_0^+}, under $\mathcal{P}_0^+$, we have that
		\begin{align*}
		A(1)
		&=\sum_{s\leq 1}|\Delta_-Y^+(s)|^{\omega_-}A_s\left((1-s)|\Delta_-Y^+(s)|^\alpha\right)
		\leq\sum_{s\leq 1}|\Delta_-Y^+(s)|^{\omega_-}\mathcal{M}_s,
		\end{align*}
	where each $\mathcal{M}_s$ is distributed as $\mathcal{M}$ under $\mathcal{P}_1$ and is independent from $Y^+$.
	We can see the above upper bound as the stochastic integral of $s\mapsto \mathcal{M}_s$ with respect to the non-decreasing process $t\mapsto\sum_{s\leq t}|\Delta_-Y^+(s)|^{\omega_-}$ until time 1.
	In particular, we can use the optional projection theorem from \cite{DM2} Theorem 57 (see Theorem 43 of the same book for the definition of the optional projection), which states that the expectation of the stochastic integral is equal to the same expectation where each $\mathcal{M}_s$ has been replaced by its conditional expectation given the natural filtration of $t\mapsto\sum_{s\leq t}|\Delta_-Y^+(s)|^{\omega_-}$.
	In particular, for every negative jump time $s>0$, $\mathcal{E}_0^+(\mathcal{M}_s|(Y^+(u))_{u\leq s})=1$. Therefore,
	\begin{align*}
		\mathcal{E}_0^+\left(A(1)\right)
		&\leq\mathcal{E}_0^+\left(\sum_{s\leq 1}|\Delta_-Y^+(s)|^{\omega_-}\right).
	\end{align*}
	The expected value of the above sum is equal to the expectation of its predictable compensator, given in the proof of Lemma 4.3 in \cite{BBCK16}.
	We thus obtain that the upper bound is equal to
	\begin{align*}
		&\int_{\intervalleoo{-\infty}{0}}\Pi^+(\mathrm{d}y)(1-e^y)^{\omega_-}e^{\omega_+y}\mathcal{E}_0^+\left(\int_0^1Y^+(s)^{\omega_-+\alpha}\mathrm{d}s\right).
	\end{align*}
	The first integral is finite by \eqref{Equation Pi Levy measure of eta^+} and \eqref{Equation Cramer hypothesis}.
	After using Tonelli's Theorem and self-similarity, the other part of the expression becomes
	\begin{align*}
		\int_0^1 s^{\frac{\omega_-}{|\alpha|}-1}\mathrm{d}s\mathcal{E}_0^+\left(Y^+(1)^{\omega_-+\alpha}\right).
	\end{align*}
	Hence, it only remains to check that the expectation is finite.
	If $|\alpha|\leq\omega_-$, then this is the case by Proposition 1(ii) in \cite{BY02}, which ensures that the positive moments of $Y^+(1)$ are finite.
	
	Suppose now that $\alpha<-\omega_-$.
	Theorem 1(iii) in \cite{BY01} shows that
	\begin{align*}
		\mathcal{E}_0^+\left(Y^+(1)^{\omega_-+\alpha}\right)
		&=C\mathbb{E}_1^+\left(I_{\eta^+}^{\omega_-/\alpha}\right),
	\end{align*}
	where $C$ is an explicit constant and
	\begin{align}\label{Equation definition I_eta^+}
		I_{\eta^+}:=\int_0^\infty\exp(\alpha\eta^+(t))\mathrm{d}t
	\end{align}
	(recall that $\eta^+(0)=0$ under $\mathbb{P}_1^+$).
	Proposition 2 in \cite{BY02} gives the finiteness and an expression for the negative moments of $I_{\eta^+}$, under the assumption that $e^{\eta^+(1)}$ admits positive moments of all order.
	However, the proof straightforwardly adapts to $\mathbb{E}_1^+(I_{\eta^+}^{\omega_-/\alpha})$ as soon as $\mathbb{E}_1^+(\exp((\omega_-+\alpha)\eta^+(1)))<\infty$, or equivalently $\kappa(\omega_++\omega_-+\alpha)<\infty$.
	\end{proof}
	
	\begin{proof}[Proof of Proposition \ref{Theorem upper envelope A(t)}]
		Let $\delta>0$. For all $n\geq 1$, we have that
		\begin{align*}
			\mathcal{P}_0^+\left(2^{(n+1)\frac{\omega_-}{|\alpha|}}A(2^{-n})>\log(2^{n+1})^{1+\delta}\right)
			&=\mathcal{P}_0^+\left(A(1)>2^{\frac{\omega_-}{\alpha}}\log(2^{n+1})^{1+\delta}\right)\\
			&\leq\frac{1}{n^{1+\delta}\log(2)^{1+\delta}}\mathcal{E}_0^+(A(1)).
		\end{align*}
		The latter is finite by Lemma \ref{Lemma E(A(1)) spectrally negative case} and therefore summable over $n$.
		Borel-Cantelli's Lemma ensures that $\mathcal{P}_0^+$-a.s. for all $n$ sufficiently large, $A(2^{-n})\leq 2^{(n+1)\frac{\omega_-}{\alpha}}\log(2^{n+1})^{1+\delta}$.
		Since $A$ is non-decreasing, it means that $\mathcal{P}_0^+$-almost surely, for all $n$ large enough and all $t\in\intervallefo{2^{-n-1}}{2^{-n}}$, it holds that
		\begin{align*}
			A(t)\leq t^{\frac{\omega_-}{|\alpha|}}\log(t)^{1+\delta},
		\end{align*}
		which concludes the proof.
	\end{proof}

	\subsection{The lower bound}
	
	Our purpose now is to show Proposition \ref{Proposition lower envelope A(t)}.
	We hence assume that \eqref{Equation assumption rho} holds.
	Similarly as for the upper bound, we can study only the asymptotic behaviour as $t\to\infty$ and easily deduce the one as $t\to 0+$, thanks to Proposition \ref{Theorem stationary area from 0}.
	
	The strategy is to decompose $A(t)$ over the jumps of $Y^+$ as in \eqref{Equation A as a sum under P_0^+}, then to find two functions such that, $\mathcal{P}_0^+$-almost surely, the motion of $Y^+(s)$ is circumscribed in between them for all $s$ large enough.
	
	\subsubsection{Upper and lower envelopes of the Eve cell}
	
	The so-called lower and upper envelopes of positive self-similar Markov processes are described respectively in \cite{CP06} and \cite{P09}.
	We need some preparations to apply the results of these two papers.
	
	Let $U(x)$ be the last passage time of $Y^+$ below $x>0$, that is $U(x):=\sup\left\{t\geq 0:Y^+(t)\leq x\right\}$.
	Define $\nu:=Y^+(U(x)-)/x$ (note that if $\Lambda(\intervalleoo{0}{\infty})=0$ then $\nu=1$ almost surely). 
	When the process admits positive jumps, the law of $\nu$ is given in \cite{CP06} Lemma 1: it has its support included in $\intervalleff{0}{1}$ and satisfies
	\begin{align}\label{Equation law of nu}
		\mathbb{P}(\nu\leq u)=\mathbb{E}(H(1)^{-1})\int_0^1\mathrm{d}v\int_{\intervalleoo{-\frac{\log(u)}{v}}{\infty}}y\Pi_H(\mathrm{d}y/|\alpha|),\qquad u\in\intervalleoo{0}{1},
	\end{align}
	where the subordinator $(H(s))_{s\geq 0}$ is the ascending ladder height process associated with $\eta^+$ and $\Pi_H$ is its L\'evy measure (see Chapter VI of \cite{B96} for background).
	
	The next lemma will be needed to apply the results describing the envelopes of $U$ and $Y^+$.
	Recall the definition of $I_{\eta^+}$ in \eqref{Equation definition I_eta^+}.
	
	\begin{lem}\label{Lemma tails of I}
		\begin{enumerate}[label=(\roman*)]
		\item For all $0<p<(\omega_+-\omega_-)/|\alpha|$, it holds that
		\begin{align*}
			&\mathbb{P}_1(I_{\eta^+}\leq t)=o(t^{p})
			\quad\text{and}\quad \mathbb{P}_1(I_{\eta^+}>1/t)=O(t),
			\qquad\text{as}\quad t\to 0+.
		\end{align*}
		\item Let $q^*:=\min\left\{\omega_+-\omega_-,\sup\left\{p\geq 0:\kappa(\omega_++p)<\infty\right\}\right\}$.
		For all $q<q^*$, it holds that
		\begin{align*}
			\mathbb{P}_1(\nu I_{\eta^+}\leq t)
			=o(t^{q/|\alpha|}),\qquad\text{as}\quad t\to 0+,
		\end{align*}
		where $\nu$ and $I_{\eta^+}$ are independent.
		\end{enumerate}
	\end{lem}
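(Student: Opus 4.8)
The plan is to treat $I_{\eta^+}=\int_0^\infty e^{\alpha\eta^+(t)}\,\mathrm{d}t$ as an exponential functional of the L\'evy process $\eta^+$, which drifts to $+\infty$ and has Laplace exponent $\phi_+(q)=\kappa(\omega_++q)$; equivalently, writing $\xi:=|\alpha|\eta^+$, one has $\xi\to+\infty$ and $I_{\eta^+}=\int_0^\infty e^{-\xi(t)}\,\mathrm{d}t$. For the first bound in (i), I would use Markov's inequality, $\mathbb{P}_1(I_{\eta^+}\le t)\le t^{p'}\,\mathbb{E}_1(I_{\eta^+}^{-p'})$: the negative moments of $I_{\eta^+}$ satisfy the Bertoin--Yor recursion $\mathbb{E}_1(I_{\eta^+}^{-(s+1)})=s^{-1}\kappa(\omega_++s|\alpha|)\,\mathbb{E}_1(I_{\eta^+}^{-s})$ with $\mathbb{E}_1(I_{\eta^+}^{-1})=|\alpha|\kappa'(\omega_+)<\infty$ (this is the mechanism already used in the proof of Lemma~\ref{Lemma E(A(1)) spectrally negative case}, cf.\ \cite{BY02}), so $\mathbb{E}_1(I_{\eta^+}^{-p'})<\infty$ for every admissible $p'$, and since the interval $p<(\omega_+-\omega_-)/|\alpha|$ is open one picks $p<p'<(\omega_+-\omega_-)/|\alpha|$ to upgrade the resulting $O(t^{p'})$ to $o(t^p)$. (Alternatively, the sharp left-tail asymptotics of exponential functionals from \cite{PS16} give directly a power of $t$ for $\mathbb{P}_1(I_{\eta^+}\le t)$.) For the second bound in (i), observe that $\phi_+$ vanishes at $0$ and at $\omega_--\omega_+<0$ with $\phi_+'(0)=\kappa'(\omega_+)>0$; thus the dual of $\xi$ drifts to $-\infty$ and satisfies the Cram\'er condition at $\beta^\ast:=(\omega_+-\omega_-)/|\alpha|$, namely $\mathbb{E}_1(e^{-\beta^\ast\xi(1)})=e^{\kappa(\omega_-)}=1$ with $0<|\alpha|\,|\kappa'(\omega_-)|<\infty$. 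The classical Cram\'er-type tail estimate for $\int_0^\infty e^{-\xi(t)}\,\mathrm{d}t$ (Rivero, Maulik--Zwart; see also \cite{AR15}) then gives $\mathbb{P}_1(I_{\eta^+}>t)\sim C\,t^{-\beta^\ast}$ as $t\to\infty$, and since $\beta^\ast\ge1$ this is $\mathbb{P}_1(I_{\eta^+}>1/t)=O(t)$; when moreover $\mathbb{E}_1(I_{\eta^+})=\int_0^\infty e^{t\kappa(\omega_++\alpha)}\,\mathrm{d}t<\infty$, plain Markov already suffices.

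For (ii), I would condition on the value of $\nu$ and use that $\nu$ and $I_{\eta^+}$ are independent, writing $\mathbb{P}_1(\nu I_{\eta^+}\le t)=\int_{(0,1]}\mathbb{P}_1(I_{\eta^+}\le t/u)\,\mathbb{P}(\nu\in\mathrm{d}u)$. The key input is the behaviour of $\mathbb{P}(\nu\le u)$ as $u\to0+$, which I would read off from the explicit formula \eqref{Equation law of nu}: for small $u$ the inner integral runs over large values of $y$, hence is controlled by the right tail of the ascending ladder height L\'evy measure $\Pi_H$ of $\eta^+$; this tail decays exponentially at rate $\delta:=\sup\{p\ge0:\kappa(\omega_++p)<\infty\}$, a fact that follows from the Wiener--Hopf factorisation of $\phi_+$ (e.g.\ Vigon's amicable equations) together with \cite{B96} Chapter~VI, and a Laplace-type evaluation of the $v$-integral in \eqref{Equation law of nu}, dominated by $v$ close to $1$, then yields $\mathbb{P}(\nu\le u)=u^{\delta/|\alpha|+o(1)}$. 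Feeding this, together with the bound on $\mathbb{P}_1(I_{\eta^+}\le\cdot)$ from (i), into the convolution above --- splitting the $u$-integral at $u=t^{1-\varepsilon}$ and estimating the two pieces separately, exactly as for a product of two independent positive variables with polynomial left tails --- produces $\mathbb{P}_1(\nu I_{\eta^+}\le t)=o(t^{q/|\alpha|})$ for every $q<q^\ast=\min\{\omega_+-\omega_-,\delta\}$.

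The step I expect to be the main obstacle is the precise determination of the left-tail exponent of $\nu$ from \eqref{Equation law of nu}, that is, the identification of the exponential decay rate of $\Pi_H$ with the abscissa of finiteness $\delta$ of $\phi_+$; a secondary, more routine difficulty is carrying out the Laplace-type asymptotics of the $v$-integral and the convolution splitting sharply enough to retain the full exponent $q^\ast/|\alpha|$ (and, for (i), to check that the negative-moment threshold indeed reaches $(\omega_+-\omega_-)/|\alpha|$) rather than settling for a strictly smaller power of $t$.
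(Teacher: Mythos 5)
Your proposal is correct and follows essentially the same route as the paper: part (i) is exactly the Markov-inequality-plus-moment-finiteness argument that the paper outsources to \cite{R12} Lemma 3, and part (ii) reproduces the paper's three steps, namely reading the left tail of $\nu$ off \eqref{Equation law of nu}, identifying the exponential decay rate of $\overline{\Pi}_H$ with the abscissa of finiteness of $\phi_+$ via the ladder-height tail identity $\overline{\Pi}_H(y)=\int\widehat{U}(\mathrm{d}z)\overline{\Pi}^+(z+y)$ (quoted in the paper from \cite{K14} Theorem 7.8), and concluding with the standard convolution estimate for a product of independent positive variables with polynomial left tails. The two difficulties you single out are precisely the points the paper's proof spends its effort on, so the plan is on target.
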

	
	\begin{proof}
	(i) It essentially follows from \cite{R12} Lemma 3 that provides finiteness of some positive and negative moments of $I_{\eta^+}$.
	(Note that Lemma 3 in \cite{R12} gives $\mathbb{E}_1(I_{\eta^+}^{-1})=|\alpha\kappa'(\omega_+)|$, which is indeed finite under our assumption \eqref{Equation Cramer hypothesis}.)
	
	(ii)
	For all $u\in\intervalleoo{0}{1}$, equation \eqref{Equation law of nu} entails that
	\begin{align}\label{Equation left tail nu}
		\mathbb{P}(\nu\leq u)
		&\leq\mathbb{E}(H(1)^{-1})|\alpha|\int_{\intervalleoo{-\log(u)/|\alpha|}{\infty}}y\Pi_H(\mathrm{d}y)\nonumber\\
		&=\mathbb{E}(H(1)^{-1})|\alpha|\left(-\frac{\log(u)}{|\alpha|}\overline{\Pi}_H(-\log(u)/|\alpha|)+\int_{-\log(u)/|\alpha|}^{\infty} \overline{\Pi}_H(z)\mathrm{d}z\right),
	\end{align}
	by Tonelli's Theorem.
	Theorem 7.8 in \cite{K14} shows that the right tail of $\Pi_H$ is described for all $y>0$ by
	\begin{align*}
		\overline{\Pi}_H(y)
		&=\int_{\intervallefo{0}{\infty}}\widehat{U}(\mathrm{d}z)\overline{\Pi}^+(z+y),
	\end{align*}
	where $\widehat{U}$ is the renewal measure of the descending ladder height process of $\eta^+$.
	Note that the integral is finite, see e.g. \cite{K14} Corollary 5.3.
	By assumption \eqref{Equation Cramer hypothesis}, there exists $q>0$ such that $q<\sup\left\{p\geq 0:\kappa(\omega_++p)<\infty\right\}$. In particular, $\kappa(\omega_++q)<\infty$, which is equivalent to 
	\begin{align*}
		\int_{\intervalleoo{1}{\infty}} e^{qy}\Pi^+(\mathrm{d}y)
		=\overline{\Pi}^+(1)+q\int_0^\infty e^{qx}\overline{\Pi}^+(x)\mathrm{d}x
		<\infty,
	\end{align*}
	so in particular $\overline{\Pi}^+(x)=o(e^{-qx})$ as $x\to\infty$.
	This leads to
	\begin{align*}
		\overline{\Pi}_H(y)
		&\leq e^{-qy}\int_{\intervallefo{0}{\infty}}\widehat{U}(\mathrm{d}z)e^{-qz}=O(e^{-qy})
	\end{align*}
	Coming back to \eqref{Equation left tail nu}, we see that
	\begin{align*}
		\mathbb{P}(\nu\leq u)
		&=O\left(|\log(u)|u^{q/|\alpha|}+\int_{0}^{u} \overline{\Pi}_H(-\log(y)/|\alpha|)\frac{\mathrm{d}y}{y}\right)
		=O\left(|\log(u)|u^{q/|\alpha|}+\int_{0}^{u}y^{q/|\alpha|-1}\mathrm{d}y\right)\\
		&=O\left(|\log(u)|u^{q/|\alpha|}\right).
	\end{align*}
	For $\delta>0$ small enough, the same reasonning holds with $q$ replaced by $q+\delta$, so that $\mathbb{P}(\nu\leq u)=o(u^{q/|\alpha|})$.
	
	To conclude, suppose that $X_1$ and $X_2$ are independent positive random variables such that for $i=1,2$, $\mathbb{P}(X_i\leq t)=o(t^{q_i})$ as $t\to0+$, for some $q_1>q_2>0$.
	Note that there exists a constant $C$ such that $\mathbb{P}(X_2\leq t)\leq Ct^{q_2}$ for all $t\geq 0$.
	Then, we write
	\begin{align*}
		\mathbb{P}_1(X_1X_2\leq t)
		&\leq\int_{\mathbb{R}_+}\mathbb{P}(X_1\in\mathrm{d}x)\mathbb{P}(X_2\leq t/x)
		\leq\int_{\mathbb{R}_+}\mathbb{P}(X_1\in\mathrm{d}x)C(t/x)^{q_2}
		=O(t^{q_2}),
	\end{align*}
	since $q_1>q_2$.
	The bound for $\mathbb{P}_1(\nu I_{\eta^+}\leq t)$ as $t\to 0+$ thus follows from this fact and part (i).
	\end{proof}

	Let $\underline{p}>1/|\alpha|$ and $\overline{p}>1/q^*$ where $q^*$ is defined in Lemma \ref{Lemma tails of I}.
	Define on $\intervalleoo{1}{\infty}$ the functions
	\begin{align}\label{Equation definition of g^arrow general}
		&g^{\uparrow}:t\mapsto
		t^{1/|\alpha|}\log(t)^{\overline{p}},
		\hspace{1cm} g^{\downarrow}:t\mapsto
		t^{1/|\alpha|}\log(t)^{-\underline{p}}.
	\end{align}
	Note that $g^{\uparrow}$ and $g^{\downarrow}$ are regularly varying at $\infty$ with index $1/|\alpha|$.
	Let $\overline{g}(t):=\sup_{s\leq t}g^{\uparrow}(s)$ and similarly, let $\underline{g}(t):=\inf_{s\geq t}g^{\downarrow}(s)$.
	These functions have the nice property to be strictly increasing and by Theorem 1.5.3. in \cite{BGT87} we have that
	\begin{align}\label{Equation monotone equivalents}
		\overline{g}(t)\underset{t\to \infty}{\sim} g^{\uparrow}(t)\quad\text{and}\quad\underline{g}(t)\underset{t\to \infty}{\sim} g^{\downarrow}(t).
	\end{align}
	
	\begin{lem}\label{Lemma envelopes Y and U}
		$\mathbb{P}_0^+$-almost surely, there exists $s>0$ such that
		\begin{align*}
			\underline{g}(t)
			&<Y^+(t)
			<\overline{g}(t),\hspace{1.05cm}\forall t\in\intervalleoo{s}{\infty}.
		\end{align*}
	\end{lem}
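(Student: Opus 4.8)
The plan is to read off the two envelopes of the transient pssMp $Y^+$ (under $\mathbb{P}_0^+$) from passage times: the upper envelope from the tail at $0$ of the first passage times $T(x):=\inf\{t\ge 0:Y^+(t)>x\}$, and the lower envelope from the tail at $\infty$ of the last passage times $U(x)$ — exactly the scheme of \cite{P09} and \cite{CP06}. The self-similarity of $Y^+$ under $\mathbb{P}_0^+$ gives the scaling identities $T(x)\stackrel{d}{=}x^{|\alpha|}T(1)$ and $U(x)\stackrel{d}{=}x^{|\alpha|}U(1)$, and by \eqref{Equation monotone equivalents} the monotone functions $\overline g,\underline g$ may be replaced by $g^{\uparrow},g^{\downarrow}$ in every asymptotic. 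Everything then reduces to two Borel--Cantelli estimates along the dyadic sequence $t=2^n$.

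For the upper bound I would first note that if $T(\overline g(2^n))>2^{n+1}$ then $Y^+(s)\le\overline g(2^n)$ for all $s\le 2^{n+1}$, hence $Y^+(t)<\overline g(t)$ on $(2^n,2^{n+1}]$ (strictness because $\overline g$ is strictly increasing and $t>2^n$); so it suffices that $T(\overline g(2^n))>2^{n+1}$ for all large $n$. By scaling and \eqref{Equation monotone equivalents}, $\mathbb{P}_0^+(T(\overline g(2^n))\le 2^{n+1})=\mathbb{P}_0^+(T(1)\le t_n)$ with $t_n\sim 2(n\log 2)^{-|\alpha|\overline p}\to 0$. Proceeding as in the proof of Proposition~6 in \cite{P09}, there is $C>0$ with $\mathbb{P}_0^+(T(1)\le t)\le C\,\mathbb{P}_1(\nu I_{\eta^+}\le t)$ for all $t\ge 0$, so Lemma~\ref{Lemma tails of I}(ii) yields $\mathbb{P}_0^+(T(1)\le t_n)=o(n^{-\overline p q})$ for every $q<q^*$. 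Since $\overline p>1/q^*$ one may fix such a $q$ with $\overline p q>1$; the series converges, and Borel--Cantelli gives $Y^+(t)<\overline g(t)$ for all large $t$, $\mathbb{P}_0^+$-a.s.

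For the lower bound, symmetrically, if $U(\underline g(2^{n+1}))<2^n$ then $Y^+(s)>\underline g(2^{n+1})$ for all $s\ge 2^n$, hence $Y^+(t)>\underline g(t)$ on $[2^n,2^{n+1}]$; so it suffices that $U(\underline g(2^{n+1}))<2^n$ for all large $n$. By scaling, $\mathbb{P}_0^+(U(\underline g(2^{n+1}))\ge 2^n)=\mathbb{P}_0^+(U(1)\ge s_n)$ with $s_n\sim\tfrac12((n+1)\log 2)^{|\alpha|\underline p}\to\infty$. Here I would invoke the time-reversal of \cite{CP06} (Proposition~1 and Lemma~1): the reversed process $Z_x(t):=Y^+((U(x)-t)-)$, $0\le t\le U(x)$, is a pssMp of index $\alpha$ started from $\nu x$ with underlying Lévy process $-\eta^+$, so its absorption time satisfies $U(x)=(\nu x)^{|\alpha|}I_{\eta^+}$ with $\nu\in(0,1]$ independent of $I_{\eta^+}$ (as in \eqref{Equation definition I_eta^+}); in particular $U(1)\preceq I_{\eta^+}$ stochastically. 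By Lemma~\ref{Lemma tails of I}(i), $\mathbb{P}_0^+(U(1)\ge s_n)\le\mathbb{P}_1(I_{\eta^+}\ge s_n)=O(s_n^{-1})=O(n^{-|\alpha|\underline p})$, which is summable because $\underline p>1/|\alpha|$. Borel--Cantelli then gives $Y^+(t)>\underline g(t)$ for all large $t$, $\mathbb{P}_0^+$-a.s., which together with the previous step proves the lemma.

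The main obstacle is the lower envelope: it requires the time-reversal description of $U(x)$ as the exponential functional $(\nu x)^{|\alpha|}I_{\eta^+}$ of the \emph{dual} Lévy process $-\eta^+$, and a handle on the overshoot factor $\nu$. Fortunately the crude stochastic domination $U(1)\preceq I_{\eta^+}$ together with the (rather soft) tail estimate of Lemma~\ref{Lemma tails of I}(i) already suffices, and it is precisely this step — together with the first-passage bound for $T(1)$ in the upper half — that forces the assumptions $\underline p>1/|\alpha|$ and $\overline p>1/q^*$ on the exponents defining $g^{\downarrow}$ and $g^{\uparrow}$. The rest (the scaling identities for $T$ and $U$, and the asymptotics of $t_n$ and $s_n$) is routine bookkeeping with self-similarity and regular variation.
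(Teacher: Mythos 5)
Your proof is correct and follows essentially the same route as the paper: the upper envelope via the first-passage tail bound $\mathbb{P}(T(1)<t)\leq C\,\mathbb{P}(\nu I_{\eta^+}\leq t)$ (as in Proposition 6 of \cite{P09}) combined with Lemma \ref{Lemma tails of I}(ii), and the lower envelope via the time-reversal identity $U(x)=(\nu x)^{|\alpha|}I_{\eta^+}$ from \cite{CP06} combined with Lemma \ref{Lemma tails of I}(i). The only difference is that you unpack the cited external results (Proposition 4 of \cite{P09} and Theorem 1(i) of \cite{CP06}) into explicit dyadic Borel--Cantelli arguments, whereas the paper invokes them as black boxes.
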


	\begin{proof}
		First, note that $Y^+(t)<x$ implies $U(x)>t$.
		Let $T(1):=\inf\{t\geq 0:Y^+(t)\geq 1\}$ and note that $T(1)\leq U(1)$. Proceeding as in the proof of Proposition 6 in \cite{P09}, one can show that
		\begin{align*}
			\mathbb{P}(T(1)<t)\leq C\mathbb{P}(\nu I_{\eta^+}\leq t).
		\end{align*}
		Lemma \ref{Lemma tails of I}(ii) in this paper and Proposition 4 in \cite{P09} then show the claim for $\overline{g}$.
		The statement involving $\underline{g}$ is shown using Theorem 1(i) in \cite{CP06} and our Lemma \ref{Lemma tails of I}(i).
	\end{proof}
	
	\subsubsection{Proof of Proposition \ref{Proposition lower envelope A(t)}}
	
	For clarity purpose, we prove two Lemmas that will make the proof of Proposition \ref{Proposition lower envelope A(t)} straightforward.
	We need some notation.
	Define
	\begin{align}\label{Equation definition f}
		f(t):=t\log(t)^{\alpha\overline{p}}.
	\end{align}
	We note for later use that \eqref{Equation definition of g^arrow general} and \eqref{Equation monotone equivalents} implies that
	\begin{align}\label{Equation asymptotic g(f)}
		\underline{g}(f(t))&\underset{t\to\infty}{\sim}t^{\frac{1}{|\alpha|}}\log(t)^{-\underline{p}-\overline{p}},
		\hspace{1cm}\overline{g}(f(t))\underset{t\to\infty}{\sim}t^{\frac{1}{|\alpha|}}.
	\end{align}
	To control the fluctuations of $Y^+$, we work on the event
	\begin{align*}
		E_t:=&\left\{\forall s\geq f(t):\underline{g}(s)<Y^+(s)<\overline{g}(s)\right\}.
	\end{align*}
	Note that $f$ being eventually increasing, Lemma \ref{Lemma envelopes Y and U} ensures that $\mathbb{P}_0^+(\liminf_{t\to\infty}E_t)=1$.
	Moreover, noting that $f(t)/t\to 0$ as $t\to\infty$, we see that $J_t:=\intervalleff{f(t)}{f(2t)}\subset\intervalleff{0}{t}$.
		
	\begin{lem}\label{Lemma lower bound A}
		For all $a>0$, $\mathcal{P}_0^+$-almost surely, it holds on the event $E_t$ that
		\begin{align*}
			A(t)\geq t^{\frac{\omega_-}{|\alpha|}}\log(t)^{-\omega_-(a+\underline{p}+\overline{p})}\sum_{s\in J_t}A_s(1/3)\mathds{1}_{\left\{\Delta_-\eta^+(\tau_s)\leq \log(1-\log(t)^{^{-a}})\right\}},
		\end{align*}
		for all $t$ large enough.
	\end{lem}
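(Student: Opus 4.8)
The plan is to start from the decomposition \eqref{Equation A as a sum under P_0^+} of $A(t)$ under $\mathcal{P}_0^+$, in which $\chi_\emptyset$ is identified with $Y^+$, so that $A(t)=\sum_{s\leq t}|\Delta_-Y^+(s)|^{\omega_-}A_s\bigl((t-s)|\Delta_-Y^+(s)|^{\alpha}\bigr)$ with the $A_s$ i.i.d.\ copies of $A$ under $\mathcal{P}_1$, independent of $Y^+$, and non-decreasing and non-negative. Since every summand is non-negative, we may discard all $s\notin J_t=[f(t),f(2t)]$ and all jumps with $\Delta_-\eta^+(\tau_s)>\log(1-\log(t)^{-a})$, i.e.\ with $1-e^{\Delta_-\eta^+(\tau_s)}<\log(t)^{-a}$; it then remains to bound each surviving term from below by $t^{\omega_-/|\alpha|}\log(t)^{-\omega_-(a+\underline p+\overline p)}A_s(1/3)$, uniformly in $s\in J_t$. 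Note that $\alpha<0$ and \eqref{Equation definition f} give $f(2t)=2t\log(2t)^{\alpha\overline p}=o(t)$, so that $J_t\subset[0,t]$ and $f(2t)\leq t/3$ for $t$ large. Throughout we work on the event $E_t$, which by Lemma \ref{Lemma envelopes Y and U} (and since $f$ is eventually increasing) satisfies $\mathcal{P}_0^+(\liminf_{t\to\infty}E_t)=1$.

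On $E_t$ and for $s\in J_t$, monotonicity and continuity of $\underline g,\overline g$ give the sandwich $\underline g(f(t))\leq\underline g(s)\leq Y^+(s-)\leq\overline g(s)\leq\overline g(f(2t))$ for the left limit. For $t$ large, both $g^\uparrow$ and $g^\downarrow$ from \eqref{Equation definition of g^arrow general} are increasing, hence $\underline g(f(t))=g^\downarrow(f(t))$ and $\overline g(f(2t))=g^\uparrow(f(2t))$; since moreover $f(t)\leq t$ and $f(2t)\leq 2t$ force $\log(f(t))\leq\log t$ and $\log(f(2t))\leq\log(2t)$, a direct computation from \eqref{Equation definition of g^arrow general} and \eqref{Equation definition f} yields $\underline g(f(t))\geq t^{1/|\alpha|}\log(t)^{-\underline p-\overline p}$ and $\overline g(f(2t))\leq(2t)^{1/|\alpha|}$, consistent with \eqref{Equation asymptotic g(f)}.

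Now bound the two factors of a surviving summand. For the mass, $|\Delta_-Y^+(s)|^{\omega_-}=Y^+(s-)^{\omega_-}(1-e^{\Delta_-\eta^+(\tau_s)})^{\omega_-}\geq\underline g(f(t))^{\omega_-}\log(t)^{-a\omega_-}\geq t^{\omega_-/|\alpha|}\log(t)^{-\omega_-(a+\underline p+\overline p)}$, using $Y^+(s-)\geq\underline g(f(t))$ and the retained-jump condition. For the argument of $A_s$, since $\alpha<0$ and $|\Delta_-Y^+(s)|=Y^+(s-)(1-e^{\Delta_-\eta^+(\tau_s)})\leq\overline g(f(2t))$, we get $(t-s)|\Delta_-Y^+(s)|^{\alpha}\geq(t-f(2t))\,\overline g(f(2t))^{-|\alpha|}\geq\tfrac{2t}{3}\cdot\tfrac{1}{2t}=\tfrac13$, whence $A_s\bigl((t-s)|\Delta_-Y^+(s)|^{\alpha}\bigr)\geq A_s(1/3)$ by monotonicity. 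Multiplying these two bounds and summing over the surviving indices $s\in J_t$ gives the asserted inequality.

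The computation is elementary; the only real point is that all the estimates must hold \emph{uniformly} in $s\in J_t$, which is precisely what the envelope event $E_t$ secures: the extremal bounds $Y^+(s-)\geq\underline g(f(t))$ and $Y^+(s-)\leq\overline g(f(2t))$ are the ones used, the former for the mass and the latter for the time-argument, and the slack $f(2t)=o(t)$ combined with $\overline g(f(2t))\leq(2t)^{1/|\alpha|}$ is exactly what produces the harmless constant $1/3$. Beyond Lemma \ref{Lemma envelopes Y and U} and the definitions of $f,\underline g,\overline g$, no further input is needed.
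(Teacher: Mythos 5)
Your proof is correct and follows essentially the same route as the paper: the decomposition \eqref{Equation A as a sum under P_0^+}, restriction to $J_t$ and to jumps with $1-e^{\Delta_-\eta^+(\tau_s)}\geq\log(t)^{-a}$, the envelope bounds $\underline g(f(t))\leq Y^+(s-)\leq\overline g(f(2t))$ on $E_t$, and the observation that $(t-f(2t))\overline g(f(2t))^{\alpha}\geq 1/3$ for $t$ large. The only (harmless) difference is that you replace the paper's asymptotic equivalences \eqref{Equation asymptotic g(f)} by the exact one-sided inequalities $\underline g(f(t))\geq t^{1/|\alpha|}\log(t)^{-\underline p-\overline p}$ and $\overline g(f(2t))\leq(2t)^{1/|\alpha|}$, which if anything makes the final inequality cleaner.
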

	
	\begin{proof}
	We need the lower envelope of $Y^+$ to ensure that the jumps are large enough, and the upper envelope of $Y^+$ for the area generated from it to be not too small.
	On the event $E_t$, we have
	\begin{align*}
		A(t)
		&=\sum_{s\leq t}|\Delta_-Y^+(s)|^{\omega_-}A_s\left((t-s)|\Delta_-Y^+(s)|^{\alpha}\right)\\
		&\geq\sum_{s\in J_t}|\Delta_-Y^+(s)|^{\omega_-}A_s\left((t-s)|\Delta_-Y^+(s)|^{\alpha}\right)\\
		&\geq\sum_{s\in J_t}\underline{g}(s)^{\omega_-}(1-e^{\Delta_-\eta^+(\tau_s)})^{\omega_-}A_s\left((t-s)\overline{g}(s)^{\alpha}(1-e^{\Delta_-\eta^+(\tau_s)})^{\alpha}\right)\\
		&\geq\sum_{s\in J_t}\underline{g}(f(t))^{\omega_-}(1-e^{\Delta_-\eta^+(\tau_s)})^{\omega_-} A_s\left(\left(t-f(2t)\right)\overline{g}(f(2t))^{\alpha}\left(1-e^{\Delta_-\eta^+(\tau_s)}\right)^{\alpha}\right).
	\end{align*}
	Note that $f(2t)/t\to 0$ and \eqref{Equation asymptotic g(f)} entails that $t\overline{g}(f(2t))^{\alpha}\to 1/2$ as $t\to\infty$.
	Hence, taking $t$ large enough, we see for each $s\in J_t$,
	\begin{align*}
		A_s\left(\left(t-f(2t)\right)\overline{g}(f(2t))^{\alpha}\left(1-e^{\Delta_-\eta^+(\tau_s)}\right)^{\alpha}\right)\geq A_s(1/3).
	\end{align*}
	We write
	\begin{align*}
		A(t)
		&\geq\underline{g}(f(t))^{\omega_-}\sum_{s\in J_t}(1-e^{\Delta_-\eta^+(\tau_s)})^{\omega_-}A_s\left(1/3\right)\mathds{1}_{\left\{\Delta_-\eta^+(\tau_s)<\log(1-\log(t)^{-a})\right\}}\\
		&\geq\underline{g}(f(t))^{\omega_-}\log(t)^{-a\omega_-}\sum_{s\in J_t}A_s\left(1/3\right)\mathds{1}_{\left\{\Delta_-\eta^+(\tau_s)<\log(1-\log(t)^{-a})\right\}}\\
		&\!\!\!\underset{t\to\infty}{\sim}t^{\frac{\omega_-}{|\alpha|}}\log(t)^{-\omega_-(a+\underline{p}+\overline{p})}\sum_{s\in J_t}A_s\left(1/3\right)\mathds{1}_{\left\{\Delta_-\eta^+(\tau_s)<\log(1-\log(t)^{-a})\right\}},
	\end{align*}
	where we used \eqref{Equation asymptotic g(f)}.
	\end{proof}
	
	For the lower bound of Lemma \ref{Lemma lower bound A} to be useful, we need to make sure that the sum contains at least one non negligeable term.
	We shall use the following:
	
	\begin{lem}\label{Lemma bound number of jumps}
		For all $a>\max\{0,\frac{|\alpha|}{\rho}(\overline{p}-\underline{p})\}$, $\mathcal{P}_0^+$-almost surely, for all $t$ large enough, the number of elements in $\left\{s\in J_t:\Delta_-\eta^+(\tau_s)\leq\log(1-\log(t)^{-a})\right\}$ is stochastically bounded by a Poisson random variable with parameter
		\begin{align*}
			\frac{1}{2}\log(t)^{|\alpha|(\underline{p}-\overline{p})+a\rho}.
		\end{align*}
	\end{lem}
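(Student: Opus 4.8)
The starting point is Lamperti's time change: a negative jump of $Y^+$ at a time $s\in J_t$ of relative magnitude at least $\log(t)^{-a}$ is the same thing as a jump of the L\'evy process $\eta^+$ at time $\tau_s$ with $\Delta_-\eta^+(\tau_s)\leq\log(1-\log(t)^{-a})$, and as $s$ ranges over $J_t=\intervalleff{f(t)}{f(2t)}$ its image $\tau_s$ ranges over $\intervalleff{\tau_{f(t)}}{\tau_{f(2t)}}$. Since the negative jumps of $\eta^+$ form a Poisson point process with intensity $\mathrm{d}u\otimes\Pi^+(\mathrm{d}y)$, over a \emph{deterministic} time-window of length $L$ the number of such jumps is $\mathrm{Poisson}(L\mu_t)$ with $\mu_t:=\Pi^+(\intervalleoo{-\infty}{\log(1-\log(t)^{-a})})$. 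The plan therefore has two ingredients: (a) a deterministic lower bound $L_t$ on the $\eta^+$-span $\tau_{f(2t)}-\tau_{f(t)}$ valid on $E_t$, and (b) an estimate of $\mu_t$.

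For (a), on $E_t$ one has $Y^+(s)<\overline{g}(s)\leq\overline{g}(f(2t))$ for all $s\in J_t$, hence, since $\alpha<0$,
\begin{align*}
	\tau_{f(2t)}-\tau_{f(t)}=\int_{f(t)}^{f(2t)}Y^+(s)^\alpha\,\mathrm{d}s\ \geq\ (f(2t)-f(t))\,\overline{g}(f(2t))^\alpha,
\end{align*}
and substituting the asymptotics \eqref{Equation asymptotic g(f)}, \eqref{Equation monotone equivalents} together with $f(2t)-f(t)\sim t\log(t)^{\alpha\overline{p}}$ turns the right-hand side into a deterministic $L_t$ that is, up to a constant, a power of $\log t$. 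For (b), as $\log(1-\log(t)^{-a})\to 0-$ the interval $\intervalleoo{-\infty}{\log(1-\log(t)^{-a})}$ swells to $\intervalleoo{-\infty}{0}$; writing $\Pi^+=e^{\omega_+y}(\Lambda+\widetilde{\Lambda})$, the $\widetilde\Lambda$-part contributes a quantity that converges to a finite constant, while the $\Lambda$-part is $\sim\overline{\Lambda}(\log(t)^{-a})$ (the weight $e^{\omega_+y}$ being $\approx 1$ near the moving endpoint), made rigorous exactly as in \eqref{Equation tail Pi^-} via Theorem 1.6.5 of \cite{BGT87}. By the regular variation of $\overline{\Lambda}$ at $0+$ with index $-\rho$ assumed in \eqref{Equation assumption rho}, $\mu_t$ equals $\log(t)^{a\rho}$ times a function slowly varying in $\log t$. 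Combining (a) and (b), $L_t\mu_t$ is a constant times $\log(t)^{|\alpha|(\underline{p}-\overline{p})+a\rho}$ up to a slowly varying factor, and the hypothesis $a>\max\{0,\tfrac{|\alpha|}{\rho}(\overline{p}-\underline{p})\}$ is precisely what makes this exponent positive, so the resulting Poisson parameter tends to $\infty$; absorbing the slowly varying factor yields the stated $\tfrac12\log(t)^{|\alpha|(\underline{p}-\overline{p})+a\rho}$ for $t$ large.

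The one genuine subtlety in assembling these is that $\tau_{f(2t)}$ and the event $E_t$ depend on the whole path of $\eta^+$, so the random window $\intervalleff{\tau_{f(t)}}{\tau_{f(2t)}}$ is not independent of the jumps it contains. This is handled by observing that $\tau_{f(t)}$ is a stopping time of the natural filtration of $\eta^+$ (it is the right-inverse of the continuous adapted additive functional $u\mapsto\int_0^u e^{-\alpha\eta^+(v)}\mathrm{d}v$), so by the strong Markov property the jumps of $\eta^+$ on $\intervalleff{\tau_{f(t)}}{\tau_{f(t)}+L_t}$ form a fresh Poisson point process; letting $N'_t$ be the number of them of size $\leq\log(1-\log(t)^{-a})$ we get $N'_t\sim\mathrm{Poisson}(L_t\mu_t)$ unconditionally. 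On $E_t$ (for $t$ large enough that the asymptotics in (a) hold with the chosen constant) one has $\intervalleff{\tau_{f(t)}}{\tau_{f(t)}+L_t}\subset\intervalleff{\tau_{f(t)}}{\tau_{f(2t)}}$, hence the count $N_t$ in $\{s\in J_t:\Delta_-\eta^+(\tau_s)\leq\log(1-\log(t)^{-a})\}$ satisfies $N_t\geq N'_t$; this is the stochastic lower bound asserted, and it is in this form (e.g.\ $\{N_t=0\}\cap E_t\subset\{N'_t=0\}$ along $t=2^n$) that it will feed into the proof of Proposition~\ref{Proposition lower envelope A(t)}. The main obstacle is thus the careful bookkeeping with the time-change and the stopping-time/strong-Markov step; once \eqref{Equation asymptotic g(f)} is in hand, estimates (a) and (b) are routine regular-variation computations.
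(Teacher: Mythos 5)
Your overall strategy is the same as the paper's: use the Lamperti change of time to turn $J_t$ into a time window for the L\'evy process $\eta^+$, bound the window's length from below on $E_t$ via the envelope functions, and count the jumps below the threshold as a Poisson variable with intensity $\mu_t=\Pi^+(\intervalleoo{-\infty}{\log(1-\log(t)^{-a})})\sim\overline{\Lambda}(\log(t)^{-a})$, which is $\log(t)^{a\rho}$ up to a slowly varying factor. Your estimate of $\mu_t$ is correct, and your stopping-time/strong-Markov device for replacing the random window $\intervalleff{\tau_{f(t)}}{\tau_{f(2t)}}$ by a deterministic sub-window is a genuine improvement in rigour over the paper, which asserts the Poisson domination for the random window directly.

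There is, however, a concrete gap in the exponent bookkeeping of your step (a). The upper bound you actually extract from $E_t$, namely $Y^+(s)<\overline{g}(f(2t))$ for $s\in J_t$, gives $L_t=(f(2t)-f(t))\,\overline{g}(f(2t))^{\alpha}\sim\frac12\log(t)^{\alpha\overline{p}}=\frac12\log(t)^{-|\alpha|\overline{p}}$, since $\overline{g}(f(2t))\sim(2t)^{1/|\alpha|}$ by \eqref{Equation asymptotic g(f)}. Hence $L_t\mu_t\asymp\log(t)^{a\rho-|\alpha|\overline{p}}$, not $\log(t)^{a\rho+|\alpha|(\underline{p}-\overline{p})}$ as you assert; the two exponents differ by $|\alpha|\underline{p}>0$. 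This is not cosmetic: under the stated hypothesis $a>\max\{0,\frac{|\alpha|}{\rho}(\overline{p}-\underline{p})\}$ your exponent $a\rho-|\alpha|\overline{p}$ can be negative, so your Poisson parameter need not diverge and the Borel--Cantelli step in the proof of Proposition \ref{Proposition lower envelope A(t)} would break. (The paper reaches the stated exponent by bounding $Y^+(s)$ above by $\underline{g}(2t)$ on $J_t$, which yields $(f(2t)-f(t))\,\underline{g}(2t)^{\alpha}\sim\frac12\log(t)^{|\alpha|(\underline{p}-\overline{p})}$; since $\underline{g}(2t)=o(\overline{g}(f(2t)))$, that sharper upper bound is not the one furnished by $E_t$ as you correctly use it.) To close the gap you must either justify a sharper upper envelope of $Y^+$ on $J_t$, or carry the weaker exponent $a\rho-|\alpha|\overline{p}$ honestly and strengthen the hypothesis to $a>|\alpha|\overline{p}/\rho$ --- which still yields Proposition \ref{Proposition lower envelope A(t)}, only with a larger admissible $q_0$.
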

	
	\begin{proof}
	By definition of the Lamperti time-change $s\mapsto\tau_s$, we have that
	\begin{align*}
		f(2t)-f(t)
		=\int_{\tau_{f(t)}}^{\tau_{f(2t)}}e^{|\alpha|\eta^+(s)}\mathrm{d}s,
	\end{align*}
	so in particular, on the event $E_t$ such that $\eta^+(\tau_s)<\log(\underline{g}(2t))$ for all $s\in J_t$, we get that
	\begin{align*}
		\left(f(2t)-f(t)\right)\underline{g}(2t)^{\alpha}\leq\tau_{f(2t)}-\tau_{f(t)}
	\end{align*}
	Therefore, changing the variables, the domain of the sum with respect to $u=\tau_s$ becomes at least of length
	\begin{align*}
		\frac{f(2t)-f(t)}{\underline{g}(2t)^{|\alpha|}}
		&\underset{t\to\infty}{\sim}\frac{2t|\log(2t)|^{\alpha\overline{p}}-t|\log(t)|^{\alpha\overline{p}}}{2t|\log(2t)|^{\alpha\underline{p}}}
		\underset{t\to\infty}{\sim}\frac{1}{2}|\log(t)|^{\alpha(\overline{p}-\underline{p})}
	\end{align*}
	Hence, $\eta^+$ being a L\'evy process, the number of $s\in J_t$ such that $\mathds{1}_{\left\{\Delta_-\eta^+(\tau_s)<\log(1-\log(t)^{-a})\right\}}=1$ is greater than a Poisson random variable with parameter
	\begin{align*}
		&\frac{1}{2}|\log(t)|^{\alpha(\overline{p}-\underline{p})}\Pi^+\left(\intervalleoo{-\infty}{\log(1-\log(t)^{-a})}\right)\\
		&\hspace{2cm}\underset{t\to\infty}{\sim}\frac{1}{2}|\log(t)|^{\alpha(\overline{p}-\underline{p})}\Pi^+\left(\intervalleoo{-\infty}{-\log(t)^{-a}}\right)
		\geq\frac{1}{2}|\log(t)|^{\alpha(\overline{p}-\underline{p})+a\rho},
	\end{align*}
	where we used \eqref{Equation Pi Levy measure of eta^+} to write
	\begin{align*}
		\Pi^+\left(\intervalleoo{-\infty}{-\log(t)^{-a}}\right)
		&\geq\int_{\intervalleoo{-\infty}{-\log(t)^{-a}}}e^{\omega_+y}\Lambda(\mathrm{d}y)
		=O\left(\log(t)^{a\rho}\right),
	\end{align*}
	as $t\to\infty$, by \eqref{Equation assumption rho} and Theorem 1.6.4 of \cite{BGT87}.
	\end{proof}
	
	\begin{proof}[Proof of Proposition \ref{Proposition lower envelope A(t)}]
	
	Recall that Lemma \ref{Lemma envelopes Y and U}, ensures that $\mathbb{P}_0^+(\liminf_{t\to\infty}E_t)=1$.
	In view of Lemma \ref{Lemma lower bound A}, it suffices to establish that the sum in its lower bound is almost surely bounded away from 0 for all $t$ large enough.
	
	Choose $a>\max\{0,\frac{|\alpha|}{\rho}(\overline{p}-\underline{p})\}$ and, to ease the notation, let $\delta:=\alpha(\overline{p}-\underline{p})+a\rho$ and note that $\delta>0$.
	Let $n\geq 1$.
	Thanks to Lemma \ref{Lemma bound number of jumps}, we can write
	\begin{align*}
		&\mathcal{P}_0^+\left(\sum_{s\in J_{2^n}}A_{s}\left(1/3\right)\mathds{1}_{\left\{\Delta_-\eta^+(\tau_s)<\log\left(1-\log(2^n)^{-a}\right)\right\}}\leq 1\right)\\
		&\hspace{5cm}= O\left(e^{-\log(2^n)^{\delta}}\sum_{m\geq 0}\frac{\log(2^n)^{\delta m}}{m!}\mathcal{P}_1(A(1/3)\leq 1)^m\right)\\
		&\hspace{5cm}= \exp\left(-n^{\delta}\log(2)^{\delta}\mathcal{P}_1(A(1)>1/3)\right).
	\end{align*}
	The latter being summable, Borel-Cantelli's Lemma shows that $\mathcal{P}_0^+$- almost surely, there exists $n_0\geq 1$ such that for all $n\geq n_0$, it holds that
	\begin{align*}
		\sum_{s\in J_{2^n}}A_{s}\left(1/3\right)\mathds{1}_{\left\{\Delta_-\eta^+(\tau_s)<\log\left(1-\log(2^n)^{-a}\right)\right\}}>1.
	\end{align*}
	This, the facts that $\bigcup_{n\geq n_0}J_{2^{n}}=\intervallefo{f(2^{n_0})}{\infty}$, that $A$ is non-decreasing and Lemma \ref{Lemma lower bound A} together imply that on the event $E_t$, $\mathcal{P}_0^+$-almost surely it holds that
	\begin{align}\label{Equation exponent lower envelope}
		A(t)
		&\geq t^{\frac{\omega_-}{|\alpha|}}\log(t)^{-\omega_-(a+\underline{p}+\overline{p})},
	\end{align}
	which ensures the claim.
	\end{proof}
	
	\begin{rem}\label{Remark exponent lower envelope}
	Although we do not claim that it is optimal, Equation \eqref{Equation exponent lower envelope} shows that Proposition \ref{Proposition lower envelope A(t)} applies for all $q>\omega_-(1/|\alpha|+1/q^*+\max\{0,\frac{|\alpha|}{\rho}(1/q^*-1/|\alpha|)\})$, where $q^*$ is defined in Lemma \ref{Lemma tails of I}.
	Indeed, we have respectively chosen $\underline{p}$ arbitrary close from above to $1/\alpha$ and $\overline{p}$ arbitrary close from above to $1/q^*$.
	\end{rem}

	\section{Application to random maps}\label{Section Application to random maps}
	
	\paragraph{A specific family of growth-fragmentations.}
	We start this section by recalling the connection between growth-fragmentations and random surfaces that has been observed in \cite{BCK15} for Boltzmann triangulations approximating Brownian disks, and that has been generalised in \cite{BBCK16} to a broader family of Boltzmann maps approximating stable disks and plane.
	
	More details on what follows can be found in \cite{BBCK16}.
	Let $\theta\in\intervalleof{1}{3/2}$ and for all $q\in\intervalleoo{\theta}{2\theta+1}$, let 
	\begin{align}\label{Equation kappa_theta}
		\kappa_\theta(q):=\frac{\cos(\pi(q-\theta))}{\sin(\pi(q-2\theta))}\cdot\frac{\Gamma(q-\theta)}{\Gamma(q-2\theta)}.
	\end{align}
	Thus defined, $\kappa_\theta$ is the cumulant of a specific self-similar growth-fragmentation $\mathbf{X}_\theta$. Let its index of self-similarity be $\alpha=1-\theta$. 
	
	Informally, the collection of cycles' lengths observed at heights in some discrete random maps with large boundary converges, when properly rescaled, towards $\mathbf{X}_\theta$, where $\theta$ depends on the tail of the distribution of the degree of a typical face (see Theorem 6.8 in \cite{BBCK16}).
	The Brownian case corresponds to $\theta=3/2$.
	This means that we obtain $\mathbf{X}_{3/2}$ under $\mathcal{P}_1$ (respectively under $\mathcal{P}_0^+$) in the scaling limit of the sliced approximation of the free\footnote{\textit{free} refers to the fact that the total intrinsic area of the surface is not fixed but random, with law described in \cite{BC17} Proposition 4.} Brownian disk (respectively plane); As noted in introduction, $\mathbf{X}_{3/2}$ also appears when directly slicing the Brownian disk or the Brownian plane (see \cite{L17} Theorem 3 and 23).
	
	In the case of $\mathbf{X}_\theta$, $\eta^+$ and $\eta^-$ belong to the class of hypergeometric L\'evy processes, see Proposition 5.2 in \cite{BBCK16} for this fact and \cite{KP13} for a definition and references on hypergeometric L\'evy processes.
	This yields an explicit expression of the densities of $\Pi^+$ and $\Pi^-$.
	
	\begin{lem}\label{Lemma integrability Pi^+}
		Let $c_-:=\frac{\Gamma(\theta+1)}{\pi}$ and $c_+:=\frac{\Gamma(\theta+1)}{\pi}\sin(\pi(\theta-1/2))$.
		The densities of $\Pi^+$ and $\Pi^-$ are given by
		\begin{align*}
			&\Pi^+(\mathrm{d}y)/\mathrm{d}y
			=c_-\frac{e^{3y/2}}{(1-e^y)^{\theta+1}}\mathds{1}_{\left\{y<0\right\}}+c_+\frac{e^{3y/2}}{(e^y-1)^{\theta+1}}\mathds{1}_{\left\{y>0\right\}},\\
			&\Pi^-(\mathrm{d}y)/\mathrm{d}y
			=c_-\frac{e^{y/2}}{(1-e^y)^{\theta+1}}\mathds{1}_{\left\{y<0\right\}}+c_+\frac{e^{y/2}}{(e^y-1)^{\theta+1}}\mathds{1}_{\left\{y>0\right\}}.
		\end{align*}
	\end{lem}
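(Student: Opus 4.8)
The plan is to deduce both density formulae from the single computation carried out inside the proof of Proposition~5.2 in \cite{BBCK16}, together with the elementary relation between $\Pi^+$ and $\Pi^-$ coming from \eqref{Equation Pi Levy measure of eta^-} and \eqref{Equation Pi Levy measure of eta^+}. Recall that for the family $\mathbf{X}_\theta$ one has $\alpha=1-\theta$, and the two roots of $\kappa_\theta$ are $\omega_-=\theta+1/2$ and $\omega_+=\theta+3/2$ (the zeros of $q\mapsto\cos(\pi(q-\theta))$ lying in the domain of $\kappa_\theta$), so that $\omega_+-\omega_-=1$. Comparing \eqref{Equation Pi Levy measure of eta^-} with \eqref{Equation Pi Levy measure of eta^+} gives $\Pi^+(\mathrm{d}y)=e^{(\omega_+-\omega_-)y}\Pi^-(\mathrm{d}y)=e^{y}\,\Pi^-(\mathrm{d}y)$. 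Hence it suffices to establish the expression for $\Pi^+(\mathrm{d}y)/\mathrm{d}y$; the one for $\Pi^-(\mathrm{d}y)/\mathrm{d}y$ then follows at once by multiplying by $e^{-y}$.

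The key input, which I would quote from the proof of Proposition~5.2 in \cite{BBCK16}, is that $\eta^+$ is the hypergeometric, Lamperti-stable L\'evy process arising in the Lamperti representation of a suitably normalised strictly $\theta$-stable L\'evy process conditioned to stay positive, and that the density $h$ of the image of $\Pi^+$ under the map $x\mapsto e^x$ is
\[
h(z)=\frac{\Gamma(\theta+1)}{\pi}\,\frac{z^{1/2}}{(1-z)^{\theta+1}}\,\mathds{1}_{\{0<z<1\}}+\frac{\Gamma(\theta+1)\sin(\pi(\theta-1/2))}{\pi}\,\frac{z^{1/2}}{(z-1)^{\theta+1}}\,\mathds{1}_{\{z>1\}},
\]
that is $h(z)=c_-\,z^{1/2}(1-z)^{-\theta-1}\mathds{1}_{\{0<z<1\}}+c_+\,z^{1/2}(z-1)^{-\theta-1}\mathds{1}_{\{z>1\}}$ with the constants $c_-,c_+$ of the statement. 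Then I would simply perform the change of variable $z=e^y$: since $\Pi^+$ is the push-forward of $h(z)\,\mathrm{d}z$ by $z\mapsto\log z$, we get $\Pi^+(\mathrm{d}y)/\mathrm{d}y=e^{y}h(e^{y})$, which equals $c_-e^{3y/2}(1-e^y)^{-\theta-1}$ for $y<0$ and $c_+e^{3y/2}(e^y-1)^{-\theta-1}$ for $y>0$. This is exactly the announced density of $\Pi^+$, and multiplying through by $e^{-y}$ yields the announced density of $\Pi^-$. One may cross-check the latter directly against the general formulae for L\'evy densities of hypergeometric L\'evy processes, recovering the same constants $c_-$ and $c_+$ (see \cite{KP13}).

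I do not expect a genuine obstacle: the whole argument is a change of variables plus the relation $\Pi^+=e^{y}\Pi^-$, and the only delicate points are conventional ones. One must keep track of the normalisation of the stable process used in \cite{BBCK16} (this fixes the global factor $\Gamma(\theta+1)/\pi$), and one must correctly account for the asymmetric factor $\sin(\pi(\theta-1/2))$ in $c_+$, which enters through the positivity parameter $(\theta-1/2)/\theta$ of the stable process conditioned to stay positive. As a sanity check, at the boundary $\theta=3/2$ one has $\sin(\pi(\theta-1/2))=0$, hence $c_+=0$ and $\eta^{\pm}$ have no positive jumps, consistently with the Brownian case.
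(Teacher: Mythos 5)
Your proposal is correct and follows essentially the same route as the paper: both take as the key input the density $h$ of the push-forward of $\Pi^+$ by $x\mapsto e^x$ from the proof of Proposition~5.2 in \cite{BBCK16}, and obtain the density of $\Pi^+$ by the change of variables $z=e^y$. The only (minor, and arguably cleaner) difference is in how you pass to $\Pi^-$: the paper identifies the constants $c_-$ and $c_+$ via \cite{CKP09} Section~2, whereas you use the relation $\Pi^+(\mathrm{d}y)=e^{(\omega_+-\omega_-)y}\Pi^-(\mathrm{d}y)=e^{y}\Pi^-(\mathrm{d}y)$, which is immediate from \eqref{Equation Pi Levy measure of eta^-} and \eqref{Equation Pi Levy measure of eta^+} together with $\omega_+-\omega_-=1$ for the family $\mathbf{X}_\theta$; this makes the second half of the argument self-contained within the paper.
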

	\begin{proof}
		In the proof of Proposition 5.2 in \cite{BBCK16}, it is shown that the density $h$ of the image of $\Pi^+$ by $x\mapsto e^x$ is given by
		\begin{align*}
			h(z)
			&=\frac{\Gamma(\theta+1)}{\pi}\frac{z^{1/2}}{(1-z)^{\theta+1}}\mathds{1}_{\left\{0<z<1\right\}}+\frac{\Gamma(\theta+1)\sin(\pi(\theta-1/2))}{\pi}\frac{z^{1/2}}{(z-1)^{\theta+1}}\mathds{1}_{\left\{z>1\right\}}.
		\end{align*}
		The expression of $\Pi^+(\mathrm{d}y)/\mathrm{d}y$ follows from a straightforward change of variables.
		One then gets the expression of $\Pi^-(\mathrm{d}y)/\mathrm{d}y$ from \cite{CKP09} Section 2 by identifying $c_-$ and $c_+$.

	\end{proof}
	
	From \eqref{Equation kappa_theta}, one sees that \eqref{Equation Cramer hypothesis} is satisfied with $\omega_-=\theta+1/2$ and $\omega_+=\theta+3/2$.
	Moreover, one can check that the assumption \eqref{Equation assumption rho} is also satisfied with $\rho=\theta$, by looking at the behaviour of $\Pi^-(\intervalleoo{-\infty}{\log(x)})$, as explained after \eqref{Equation tail Pi^-}.
	This means that our results apply in particular to $\mathbf{X}_\theta$.
	To sum up, the parameters in terms of $\theta$ are
	\begin{align}\label{Equation parameters in terms of theta}
	\begin{cases}
		\alpha&=1-\theta\\
		\omega_-&=\theta+1/2\\
		\omega_+&=\theta+3/2\\
		\rho&=\theta
	\end{cases}
	\end{align}

	\paragraph{Area of a a small annular in Brownian and stable disks.}
	Le Gall showed the following in \cite{L17} Theorem 3: denote $\mathbf{v}$ the intrinsic area measure of the free Brownian disk with boundary size $x>0$, and let $B_\epsilon$ be the annular of width $\epsilon$ from the boundary (the set of points at distance smaller than $\epsilon$ from the boundary). Then it holds that
	\begin{align}\label{Equation Le Gall}
		\lim_{\epsilon\to 0+}\epsilon^{-2}\mathbf{v}(B_\epsilon)=x,\quad\text{almost surely.}
	\end{align}
	As explained in the beginning of the section, the above convergence can be translated in terms of $\mathbf{X}_{3/2}$ under $\mathcal{P}_x$, where the area of the annular corresponds to $A(\epsilon)$.
	We check whether we retrieve the same result using our theorem \ref{Theorem area GF}.
	
	Consider $\mathbf{X}_{\theta}$ for $\theta\in\intervalleof{1}{3/2}$.
	The computations before Theorem \ref{Theorem area GF} show that
	\begin{align*}
		&\overline{\Lambda}(\epsilon)
		=\Lambda(\intervalleoo{-\infty}{-\epsilon})
		\underset{\epsilon\to 0+}{\sim}\frac{1/2}{\theta}\epsilon^{-\theta-1/2}\Pi^-\left(\intervalleoo{-\infty}{\log(\epsilon)}\right),\\
	\end{align*}
	which thanks to Lemma \ref{Lemma integrability Pi^+} can be written as $\overline{\Lambda}(\epsilon)\sim\frac{c_-}{\theta}\epsilon^{\theta}$.
	Recalling \eqref{Equation parameters in terms of theta}, Theorem \ref{Theorem area GF} thus reads as follows:	
	For all $x>0$, it holds that
	\begin{align*}
			\lim_{\epsilon\to 0+}\epsilon^{-\frac{\theta-1/2}{\theta-1}}A(\epsilon)=\frac{2(\theta-1)}{(\theta-1/2)}\cdot c_-\mathbb{E}_1^-\left(I^{-\frac{1}{2(\theta-1)}}\right)x,\quad\mathcal{P}_x\text{-a.s. and in }\mathbb{L}^1.
	\end{align*}
	Note that for $\theta=3/2$, we get
\begin{align*}
	\lim_{\epsilon\to 0+}\epsilon^{-2}A(\epsilon)
	&=c_-\mathbb{E}_1^-\left(I^{-1}\right)x
	=\frac{\Gamma(5/2)}{2\pi}|\kappa_{3/2}'(2)|x,
\end{align*}
where the value of the expectation is given in \cite{CPY97} Proposition 3.1(iv). (Note that since $I=\int_0^\infty\exp(\eta^-(t)/2)\mathrm{d}t$, we get $\mathbb{E}_1^-(I^{-1})=|\mathbb{E}_1^-(\eta^-(1)/2)|$.)
Because $\kappa_{3/2}(2)=0$ and using the explicit expression of $\kappa_{3/2}$ given in \eqref{Equation kappa_theta}, we have that
\begin{align*}
	\kappa_{3/2}'(2)
	&=\lim_{q\to 2}\frac{\Gamma(q-3/2)}{(q-2)\Gamma(q-3)}
	=\frac{\Gamma(1/2)}{\text{Res}_{\Gamma}(-1)}
	=-\sqrt{\pi},
\end{align*}
where we used that the residue of $\Gamma$ at $-1$ is $-1$ and $\Gamma(1/2)=\sqrt{\pi}$.
Well known properties of the gamma function entails that $\Gamma(5/2)=3\Gamma(1/2)/4=3\sqrt{\pi}/4$.
We then obtain
\begin{align*}
	\epsilon^{-2}A(\epsilon)\underset{\epsilon\to 0+}{\sim}\epsilon^2\frac{3}{8}x.
\end{align*}
	Note that we get an additional factor $3/8$ compare to \eqref{Equation Le Gall}.
	The cumulant function $\kappa$ in \cite{LR18} Section 11.1 is equal to $\sqrt{8/3}\cdot\kappa_{3/2}$, which simply corresponds to multiplying the distance in the Brownian map by a constant.
	This means in particular, considering this $\kappa$ instead of ours, that $\eta^-$ becomes $\sqrt{8/3}\cdot \eta^-$, and the constant $c_-$ of Lemma \ref{Lemma integrability Pi^+} becomes $\sqrt{8/3}\cdot c_-$.
	The above is therefore consistent with \eqref{Equation Le Gall}.
	
	\paragraph{Area of a small ball in Brownian and stable maps.}
	If we read Proposition \ref{Theorem stationary area from 0} in terms of $\mathbf{X}_\theta$, we get that the law of $t^{(\theta+1/2)/(1-\theta)}A(t)$ is the same for all $t>0$. This coincides with the volume growth exponent of infinite Boltzmann planar maps in the so-called \textit{dilute phase}, meaning that the number of vertices at height $n$ as $n\to\infty$ in such maps grows at a speed $n^{(\theta+1/2)/(1-\theta)}$, see equation (4.3) in Theorem 4.2 of \cite{BC17}.
	
	When $\theta=3/2$, this exponent is equal to $-4$.
	In \cite{L07} Lemma 6.2 (see also \cite{M14} Lemma 4.4.4), it is shown that in this case, for any $\delta>0$, it holds that
	\begin{align*}
			\limsup_{t\to 0+}t^{-4+\delta}A(t)=0,\quad\mathbb{P}_0^+\text{-a.s.}
	\end{align*}
	The assumption $\kappa(\omega_++\omega_-+\alpha)<\infty$ in Proposition \ref{Theorem upper envelope A(t)} reads as $\kappa_{3/2}(5/2)<\infty$, which is indeed the case by \eqref{Equation kappa_theta}.
	In particular, our propositions \ref{Theorem upper envelope A(t)} and \ref{Proposition lower envelope A(t)} thus improve the above, replacing $t^{\delta}$ by a power of $|\log(t)|$ and considering the $\liminf$ as well.
	The exponent $q_0$ on the $\log$ of the lower bound is given in Remark \ref{Remark exponent lower envelope}, in terms of some constant $q^*$ defined in Lemma \ref{Lemma tails of I}.
	One can check that for any $\theta\in\intervalleof{1}{3/2}$, one has that $q^*=\min\{1,\theta-1/2\}=\theta-1/2$, and therefore $q_0=(\theta+1/2)\left((\theta-1)^{-1}+(\theta-1/2)^{-1}\right)$.
	In the Brownian case $\theta=3/2$, we obtain $q_0=6$.
	
	Thus, for $\mathbf{X}_\theta$ with any $\theta\in\intervalleof{1}{3/2}$, the following holds:
	for all $\delta>0$ and $q>q_0$ as above, we have that
		\begin{align*}
			&\limsup_{t\to 0\text{ or }\infty}|\log(t)|^{-1-\delta}t^{\frac{\theta+1/2}{1-\theta}}A(t)=0,\quad\mathbb{P}_0^+\text{-a.s.}\\
			&\liminf_{t\to 0\text{ or }\infty}|\log(t)|^{q}t^{\frac{\theta+1/2}{1-\theta}}A(t)=\infty,\quad\mathbb{P}_0^+\text{-a.s.}
		\end{align*}

	\bibliographystyle{plain}
	\bibliography{biblio}	
		
\end{document}